\newcommand{\assign}{:=}
\newcommand{\comma}{{,}}
\newcommand{\mathi}{\mathrm{i}}
\newcommand{\of}{:}
\newcommand{\tmem}[1]{{\em #1\/}}
\newcommand{\tmmathbf}[1]{\ensuremath{\boldsymbol{#1}}}
\newcommand{\tmop}[1]{\ensuremath{\operatorname{#1}}}
\newcommand{\tmtextit}[1]{\text{{\itshape{#1}}}}
\newcommand{\tmxspace}{\hspace{1em}}
\newenvironment{proof}{\noindent\textbf{Proof\ }}{\hspace*{\fill}$\Box$\medskip}
\newcounter{nnacknowledgments}
\newtheorem{acknowledgments*}[nnacknowledgments]{Acknowledgments}}
\newtheorem{convention}{Convention}}
\newtheorem{corollary}{Corollary}
\newtheorem{definition}{Definition}
{\theorembodyfont{\rmfamily}\newtheorem{example}{Example}}
\newtheorem{lemma}{Lemma}
\newtheorem{notation}{Notation}
\newtheorem{proposition}{Proposition}
{\theorembodyfont{\rmfamily}\newtheorem{remark}{Remark}}
\newtheorem{theorem}{Theorem}
\begin{document}

\title{Quenched and Annealed CLTs for the one-periodic Aztec diamond in random
environment}

\author{Panagiotis Zografos}

\maketitle

\begin{abstract}
  We study the asymptotic behavior of random dimer coverings of the
  one-periodic Aztec diamond in random environment. We investigate quenched
  limit theorems for the height function and we extend annealed limit theorems
  that were recently studied in {\cite{B52}}. We consider more general choices
  of random edge weights (independence is not assumed) and we distinguish two
  cases where the random edge weights satisfy the Central Limit Theorem (CLT)
  under different scalings. For both cases, we prove convergence to the
  Gaussian Free Field for the quenched fluctuations. For the annealed version,
  it had been shown in {\cite{B52}}, that Gaussian Free Field fluctuations can
  be dominated by the much larger fluctuations of the random environment. To
  access quenched fluctuations we analyze the Schur process with random parameters
  in a way that allows to prove the annealed CLT for the height function for
  non i.i.d. weights. We consider specific examples where we determine the
  asymptotic fluctuations.
\end{abstract}

\section{Introduction}

\subsection{Overview}

The dimer model, and in particular random domino tilings of the Aztec diamond,
is a well studied model of 2D statistical mechanics which exhibits very rich
universal behavior (see {\cite{B89}} for an exposition). They were first
introduced in {\cite{B56}}. Since then, there have been numerous results for
uniformly random domino tilings, studying among others the limit shape
(existence of the arctic curve) {\cite{B83}}, the local behavior close to the
boundary {\cite{B84}}, the fluctuations of the domino height function
{\cite{B3}}, {\cite{B74}}. Non-uniform random tilings have also been
considered extensively in the literature, and have led to richer geometric
phenomena {\cite{B78}}, {\cite{B79}}, {\cite{B68}}, {\cite{B80}},
{\cite{B65}}, {\cite{B76}}, {\cite{B75}}, {\cite{B77}}, {\cite{B81}}.

More recently, there has been drawn attention on studying dimer models in
random environment, where the edge weights are random themselves {\cite{B52}}, {\cite{DVP}}
{\cite{B82}}. Related models had been considered in the physics literature
before {\cite{B85}}.

In this paper, we continue in this direction and further study the one-periodic
Aztec diamond in random environment. The randomness of our one-periodic
weights is significantly more general than that of {\cite{B52}}, and
independence is not assumed in general. We consider random edge weights that
satisfy a Central Limit Theorem in the sense of moments (see Definition
\ref{BBBB} for the exact conditions). Such conditions are satisfied for i.i.d.
random variables, families of Markov chains, or eigenvalues of the Gaussian
Unitary Ensemble (GUE), among others. We will call such weights $C \text{} L
\text{} T$ $a \text{} p \text{} p \text{} r \text{} o \text{} p \text{} r
\text{} i \text{} a \text{} t \text{} e$.

We study the quenched Central Limit Theorem for the height function. For the
annealed version and i.i.d. edge weights, it had been shown in {\cite{B52}}
that Gaussian Free Field fluctuations can be perturbed or even dominated by
the fluctuations of the random environment. We show that convergence to the
Gaussian Free Field is recovered for the quenched version (see Theorem
\ref{NMNm}). Gaussian Free Field type fluctuations were conjectured by
Kenyon-Okounkov {\cite{B90}}, {\cite{B91}}, {\cite{B92}}, for a variety of
dimer models, and they were also verified in several models {\cite{B95}},
{\cite{B96}}, {\cite{B93}}, {\cite{B90}}, {\cite{B91}}, {\cite{B94}}. For the
one-periodic Aztec diamond, even with random edge weights, for a given
realization of disorder the fluctuations of the height function are still
described by the Gaussian Free Field.

Furthermore, we extend the annealed Central Limit Theorems for the height
function that were recently studied in {\cite{B52}}. Our results apply to CLT
appropriate edge weights. We also analyze the annealed fluctuations, for
specific examples of edge weights, such as Markov chains or eigenvalues of GUE
(see Examples \ref{ex21}, \ref{ex22}, \ref{ex24}).

To establish our results, we analyze Schur processes with random parameters.
Our approach is different from that of {\cite{B52}}, in the sense that we do
rely on the nice properties of Schur functions (determinantal form, Cauchy
identity, etc.), but we do not employ the method of Schur generating functions
{\cite{B1}}, {\cite{B3}}, {\cite{B4}}. The main reason is that both the
quenched and the annealed Schur generating functions cannot be simplified for
non i.i.d. random weights. As a result, we develop a unified set of arguments
that hold for more general random weights, and simultaneously establish both
quenched and annealed Central Limit Theorems.

\subsection{The model}

The Aztec diamond graph is a classical bipartite graph embedded in the square
lattice. An example of specific size is illustrated in Figure \ref{Fig1}. The
size of the Aztec diamond is the number of lattice vertices along each side. A
dimer covering of this graph (also known as perfect matching) is a subset of
its edges, with elements called dimers, such that each vertex is covered by
exactly one dimer. There is also a one-to-one correspondence between domino
tilings of the Aztec diamond and dimer coverings of this graph. We assign at
each edge of our graph of size $M$, weights $\{ w_i, u_i, x_i, v_i \}_{i =
1}^M$, in a so called ``one-periodic way''. This means that the weights change
in one direction, but repeated on the other (see Figure \ref{Fig1}). Given
such weights, we consider a probability measure on the set of all dimer
coverings $D$ of the Aztec diamond graph, defined as
\[ \mathbb{P} (D) \assign \frac{1}{Z} \prod_{e \in D} W (e),  \]
where $W (e)$ is equal to one of the $w_i, u_i, x_i, v_i$. For our model, the
weights of the above probability measure are considered to be random, and we
assume that they satisfy certain properties as $M \rightarrow \infty$ (see
Definition \ref{BBBB}). Our goal is to study a random dimer covering on the
graph with these weights, as $M \rightarrow \infty$.

\begin{figure}[h]
	\centering
	\includegraphics[width=12.91cm]{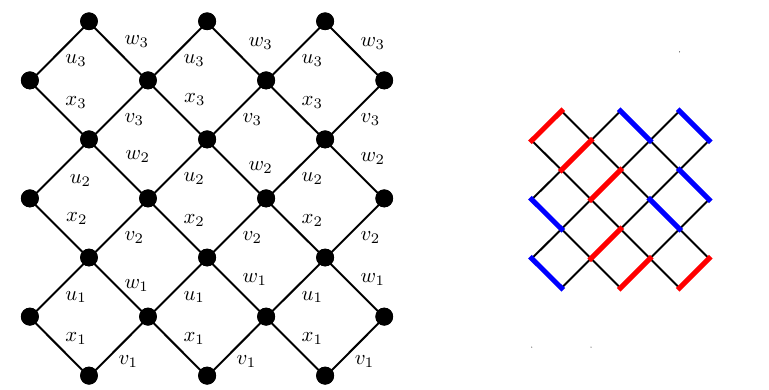}
	\caption{Left: The Aztec diamond graph of size 3 with
		one-periodic weights. Right: A dimer covering of the graph. The product of
		the edge weights for the specific covering is equal to
		$u_3 v_3 u_2 u_1 v_1^2 x_2^2 x_1 w_3^2 w_2$.}
	\label{Fig1}
\end{figure}

\subsection{Main results}

In this subsection, to avoid technicalities, we present informal versions of
our main results. For the precise statements, we refer to the corresponding
theorems in the next sections. Each dimer covering of the Aztec diamond is
uniquely determined by an integer-valued height function {\cite{B86}}, which
provides a standard way to parametrize it (see subsection \ref{mnarnths} for
more discussion). We describe random dimer coverings of our model
asymptotically through this height function.

We recall that the edge weights of our model (see Figure 1) are random and CLT
appropriate, in the sense that they satisfy certain asymptotic conditions (see
Definition \ref{BBBB}). Our main result is the quenched CLT for the height
function.

\begin{theorem}[Informal version of the quenched CLT; Theorem \ref{NMNm}]
  \label{Main1}Assume that the random edge weights of the one-periodic Aztec
  diamond are CLT appropriate. Then, almost surely with respect to the
  distribution of the random weights, the height function has a limit shape
  that corresponds to deterministic edge weights. Furthermore, almost surely
  with respect to the distribution of the random weights, the fluctuations of
  the height function are described by the Gaussian Free Field.
\end{theorem}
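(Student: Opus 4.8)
The plan is to reduce the quenched statement to a careful analysis of the Schur process describing the one-periodic Aztec diamond with random parameters, exploiting the fact that, under the CLT-appropriate hypothesis, a typical realization of the disorder looks to leading order like a deterministic perturbation of the homogeneous model. First I would recall the exact bijection between dimer coverings of the one-periodic Aztec diamond and a Schur process whose specializations are governed by the random weights $\{w_i,u_i,x_i,v_i\}_{i=1}^M$, so that correlation functions of the height function are expressed through Schur functions via the determinantal/Cauchy-identity machinery (but without invoking Schur generating functions, as the introduction indicates). The height function at a macroscopic point is then a linear statistic of the associated point process, and its moments are contour integrals whose integrands depend on the empirical data of the random parameters only through a few statistics — essentially (weighted) partial sums of the $\log$-weights along the periodic direction.

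The second step is the quenched law of large numbers for the limit shape. Here I would show that, almost surely, the relevant empirical averages of the random weights converge to their deterministic (homogeneous) limits: the CLT-appropriate condition forces the fluctuations of these partial sums to be of lower order than the scale at which the limit shape is read off (the partial sums grow like $M$ while fluctuations are $o(M)$ by the CLT scaling, with a Borel–Cantelli / subsequence argument plus monotonicity in the height function to upgrade pointwise a.s. convergence to uniform a.s. convergence on the diamond). Consequently the saddle point governing the leading-order asymptotics of the Schur-process correlation kernel is, almost surely, exactly the saddle point of the corresponding deterministic one-periodic Aztec diamond, whose limit shape (arctic curve and complex structure) is already known; this yields the first assertion.

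The third and main step is the quenched CLT: conditionally on a typical disorder realization, the centered height function converges to the Gaussian Free Field in the natural complex structure $z = \Omega(\text{coordinate})$ determined by the limit shape. The strategy is the method of moments for linear statistics: express the $k$-th cumulant of the height function as a sum of contour integrals, show that the disorder enters these integrals only through the empirical statistics already controlled in Step 2, and argue that almost surely these integrals converge to the same limits as in the deterministic one-periodic model — for which Gaussian Free Field convergence is known (the third and higher cumulants vanish in the limit, and the covariance is the Green's function of the Laplacian). The key technical point distinguishing quenched from annealed is that one must freeze the environment: the fluctuations of the random weights, which in the annealed setting can dominate (as shown in \cite{B52}), are a.s. a deterministic shift of the argument once we condition, and this shift is absorbed into the (a.s. convergent) limit-shape data rather than contributing extra Gaussian noise.

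I expect the main obstacle to be controlling the Schur-function contour integrals uniformly over a full-measure set of disorder realizations: one needs quantitative estimates showing that the error between the finite-$M$ kernel and its limit is $o(1)$ simultaneously for all macroscopic points and for all cumulant orders, which requires bounds on the partial-sum fluctuations of the random weights that are strong enough (via Borel–Cantelli) to hold almost surely, not merely in probability. The CLT-appropriate hypothesis (convergence of moments under the stated scaling) is precisely what is designed to furnish these bounds; making the deduction from "CLT in the sense of moments" to "a.s. negligible perturbation of the saddle" rigorous, including the interchange of the $M\to\infty$ limit with the almost-sure quantifier, is where the real work lies. The remaining ingredients — the determinantal formulas, the classical saddle-point analysis, and the identification of the Gaussian limit with the Gaussian Free Field — are then adaptations of known arguments for the deterministic one-periodic Aztec diamond.
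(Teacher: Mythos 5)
Your outline is correct and rests on the same analytic core as the paper (the Schur-process description, the Cauchy identity and differential operators, and the method of moments via contour integrals whose integrands see the disorder only through the empirical power sums of the weights), but the probabilistic step that produces the ``almost surely'' is organized differently. You propose to concentrate the \emph{environment}: use the moment bounds implied by CLT-appropriateness to Borel--Cantelli the empirical power sums of $\{\tmmathbf{y}_i\},\{\tmmathbf{\beta}_j\}$ to their deterministic limits, and then, on that full-measure event, run a frozen-environment CLT for the resulting deterministic one-periodic model. The paper instead concentrates the \emph{quenched observables themselves}: it identifies the leading term of the quenched joint moments $X_{\gamma_1,\ldots,\gamma_r}^{k_1,\ldots,k_r}$ as the pairing sum of the $\mathcal{G}_{k_i,k_j}^{(N_i,N_j)}$ terms of Lemmas \ref{Lemma3} and \ref{Lemapent}, shows in Proposition \ref{pfgmsklr} that their fourth (for $\varepsilon=1/2$; second for $\varepsilon=1$) central moment over the environment is $O(M^{-2})$, and then gets almost-sure convergence to the GFF covariance (\ref{poslft}) by Chebyshev and $\sum_M M^{-2}<\infty$, without ever proving an almost-sure statement about the weights. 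Your route is more modular and arguably more elementary (one concentration step for the environment, then a purely deterministic limit theorem), while the paper's route avoids quantifying over the environment and works with the quenched moments directly, at the price of a four-replica moment computation. One caveat for your plan: the frozen-environment GFF statement you invoke as ``known'' is not off-the-shelf for general one-periodic weights whose empirical measures merely converge --- the cited deterministic results cover special weight sequences --- so you would still have to carry out the contour-integral cumulant analysis (showing the $\mathcal{H}$-type terms are lower order and that only the $(z-w)^{-2}$ term survives, exactly the content of Lemmas \ref{Lemma3}--\ref{Lemapent}); since your step 3 proposes precisely this computation, this is a matter of supplying detail rather than a gap, but it should not be attributed to the literature. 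Also note that for $\varepsilon=1/2$ your Borel--Cantelli step genuinely needs fourth moments of the weight partial sums (second moments give non-summable tails), which is available because CLT-appropriateness is convergence of all moments.
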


In contrast to the annealed version, the quenched fluctuations of the height
functions are not affected asymptotically by the randomness of the
environment, and they are still described by the Gaussian Free Field, as in
the deterministic case {\cite{B3}}, {\cite{B74}}. The proof of Theorem
\ref{NMNm} relies on analyzing the quenched moments of the centered and
rescaled height function. Such quenched moments will concentrate to moments of
the Gaussian Free Field. Although we do not address this question, we believe
that the randomness of the environment should still be visible in certain
ways, for example determining the fluctuations around these moments of the
Gaussian Free Field.

The limit shape of Theorem \ref{Main1} arises from the limit shapes of the
random empirical measures that encode our edge weights. The later limit shapes
exist due to our asymptotic conditions. From our techniques it emerges that
the limit shapes of the height function for the one-periodic Aztec diamond
with CLT appropriate edge weights, can be obtained from the same model with
deterministic edge weights. However, to our knowledge these limit shapes are
new.

Our second main result significantly extends the annealed CLT for the height
function, that was recently proved in {\cite{B52}}, for two distinguishing
types of random edge weights. For this case, we distinguish two types of CLT
appropriate edge weights (see Definition \ref{BBBB}), where each type
generalizes the class of random variables that had been considered before. As
already mentioned, CLT appropriate edge weights satisfy a CLT in the sense of
moments. The difference between the two types is that for the first type the
CLT is satisfied after scaling by $\sqrt{M}$, while for the second type the
CLT is satisfied after scaling by $M$, where $M$ is the size of the Aztec
diamond graph. For example, the first type corresponds to i.i.d. edge weights,
where the probability distribution is independent of $M$, while the second
type corresponds to edge weights that are given by the eigenvalues of GUE. In
both cases, we obtain explicit formulas for the annealed moments of the
centered and properly rescaled height function, which are expressed in terms
of moments of the edge weights, that can be analyzed asymptotically due to the
CLT appropriateness. In this way, the Gaussianity of the annealed fluctuations
arises naturally due to the Gaussian fluctuations of the random environment.

\begin{theorem}[Informal versions of annealed CLTs; Theorems \ref{nzgamskl},
\ref{alloena}]
  \label{Main2}Assume that the random edge weights of the one-periodic Aztec
  diamond are CLT appropriate. Then, the height function has a limit shape
  that corresponds to deterministic edge weights. Moreover, the following
  hold:
  \begin{enumerate}
    {\item For CLT appropriate edge weights of the first type, the
    annealed fluctuations of the height function occur on scale $\sqrt{M}$,
    and are described by a Gaussian field.}{\item For CLT appropriate edge
    weights of the second type, the annealed fluctuations of the height
    function occur on scale $M$, and are described by the sum of the Gaussian
    Free Field and an independent Gaussian Field.}
  \end{enumerate}
\end{theorem}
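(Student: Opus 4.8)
The plan is to reduce both annealed CLTs to a statement about the asymptotics of joint moments of the height function, computed through the Schur process with random parameters. First I would set up the exact correlation structure: using the one-periodicity of the weights, the dimer covering is encoded by a Schur process whose specializations are governed by the empirical measures of the random edge weights $\{w_i,u_i,x_i,v_i\}$. The determinantal/Cauchy-identity structure of Schur functions then gives exact (pre-limit) contour-integral expressions for the mixed moments $\mathbb{E}\bigl[\prod_k (H(z_k)-\mathbb{E}H(z_k))\bigr]$, where the expectation is the full annealed one and the randomness of the environment enters only through symmetric functions (power sums) of the parameters. The key algebraic step is to expand these moments as sums over pairings/partitions of the insertion points, separating a ``deterministic-limit-shape'' piece (the one obtained by replacing the random empirical measures by their almost-sure limits) from fluctuation contributions driven by the centered power sums of the weights.

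Next I would invoke CLT appropriateness (Definition \ref{BBBB}) to control those fluctuation contributions. For first-type weights the centered power sums, rescaled by $\sqrt{M}$, converge in the sense of moments to Gaussians; for second-type weights the analogous statement holds after rescaling by $M$. Plugging these into the moment expansion, one sees that for the first type the height-function fluctuations on scale $\sqrt{M}$ are linear functionals of these Gaussian power-sum limits, hence jointly Gaussian, and the covariance is the explicit bilinear form in the limiting moments of the edge weights — this gives Theorem \ref{nzgamskl}. For the second type, the $M$-scale fluctuation decomposes into two asymptotically independent pieces: the environment-driven Gaussian field (again a linear image of the Gaussian power-sum limits), plus the intrinsic dimer fluctuation which, after the deterministic limit shape is fixed, is exactly the Gaussian Free Field as in the deterministic one-periodic Aztec diamond \cite{B3}, \cite{B74}; independence follows because the GFF part is orthogonal (in the moment expansion) to every centered power sum of the weights. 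This yields Theorem \ref{alloena}. The limit shape statement in both cases is immediate once one checks that the annealed one-point function converges to that of the model with the limiting (deterministic) empirical measures.

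The main obstacle I anticipate is the bookkeeping in the moment expansion: showing that all ``mixed'' terms — those combining several centered power sums with the residual GFF fluctuation, or combining power sums at different orders — are of lower order under the relevant scaling, so that only the pure Gaussian contributions survive in the limit. Concretely, one must prove that cross-moments of the form $M^{-(n_1+\cdots+n_r)/2}\,\mathbb{E}[\,p_{j_1}^\circ\cdots p_{j_r}^\circ \cdot (\text{dimer fluctuation})^{s}\,]$ vanish unless they factor into the Gaussian pattern dictated by CLT appropriateness, and that the contour integrals representing the dimer part are uniformly controllable (no pinching of contours, uniform decay) as the random parameters range over the high-probability set where the empirical measures are close to their limits. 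This requires a careful steepest-descent / residue analysis of the Schur-process kernel that is uniform in the environment, which is the technical heart of the argument and the reason the unified quenched/annealed framework is needed.
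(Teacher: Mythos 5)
Your skeleton is the paper's: encode the tilings by a Schur process, apply the operators $\mathcal{D}_k$ to the Cauchy identity to get exact contour-integral formulas for the joint moments of $p_k^{(N,M)}$ (the multilevel version being (\ref{090909})), feed in the CLT appropriateness of the centered power sums $\dot{\tmmathbf{X}}_{j,N,M}$, $\dot{\tmmathbf{Y}}_{j,N}$, and read off a Wick structure, with the $(z-w)^{-2}$ term ($\mathcal{G}_{k_i,k_j}$, resp.\ $\mathcal{G}_{k_i,k_j}^{(N_i,N_j)}$) supplying the Gaussian Free Field in the second regime and dropping out in the first. The limit-shape part via Theorem \ref{LLN!} is also as you describe.

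The gap is in how you propose to dispose of the unwanted terms. You plan to show that all mixed/cross terms are \emph{of lower order} under the relevant scaling, supported by a steepest-descent analysis uniform over a high-probability set of environments. Executed literally this fails: after writing $\tmmathbf{X}=\dot{\tmmathbf{X}}+\bar{\tmmathbf{X}}$, $\tmmathbf{Y}=\dot{\tmmathbf{Y}}+\bar{\tmmathbf{Y}}$ and expanding, the terms of $\mathbb{E}[p_{k_1}^{(N_1,M)}\cdots p_{k_\nu}^{(N_\nu,M)}]$ in which too few centered factors appear (the tuples $(\eta,\theta)=(0,0)$ in the paper's notation) are \emph{not} small — under the $\sqrt{M}$ normalization they diverge, because the prefactor $N^{\nu/2}$ is not absorbed by the moment decay of the environment. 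They disappear only through exact cancellation against the corresponding terms of the products $\mathbb{E}[(p_{\kappa})^{\sigma}](\bar{p}_{\kappa})^{\tau-\sigma}$ generated by centering, proved in the paper by an inductive argument resting on vanishing alternating binomial sums (Proposition \ref{CLTProp}, Theorem \ref{normal1!}, reused in Theorems \ref{normal2!}, \ref{nzgamskl}, \ref{alloena}); your proposal contains no mechanism for this. Relatedly, no steepest descent or restriction to a high-probability environment event is needed for the annealed statements: the contours are fixed circles on which Convention \ref{convention} guarantees convergence of the relevant power series, and everything is carried out at the level of annealed moments. Finally, in the second regime the independence of the GFF and the environment field is not obtained by a soft orthogonality remark but by the explicit bookkeeping of pairings containing $\mathcal{G}$-factors versus the purely environmental term, which combine so that the limit is the Wick formula for $\hat{\mathfrak{c}}=\hat{\mathfrak{d}}+(\hat{\mathfrak{c}}-\hat{\mathfrak{d}})$ (see (\ref{0909})--(\ref{33345})). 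Without the cancellation step your plan does not yield either CLT.
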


To determine the Gaussian field of Theorem \ref{Main2}, is crucial to know the
Gaussian field that appears from the CLT for the edge weights. When the CLT is
satisfied after scaling by $\sqrt{M}$, the limit is usually described by the
standard Brownian motion.

\begin{corollary}[Brownian motion type fluctuations; Examples \ref{ex21},
\ref{ex22}]
  Assume that the random edge weights of the one-periodic Aztec diamond are
  given either by i.i.d. random variables or by $V$-uniformly ergodic Markov
  chains, and their probability distributions do not depend on $M$. Then, the
  annealed fluctuations of the height function occur on scale $\sqrt{M}$, and
  are described by the sum of two correlated standard Brownian motions.
\end{corollary}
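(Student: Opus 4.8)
The plan is to obtain this as a direct corollary of the first–type annealed CLT, Theorem~\ref{nzgamskl} (part~1 of Theorem~\ref{Main2}). Two things have to be supplied: first, that i.i.d.\ weights and $V$-uniformly ergodic Markov chains with $M$-independent law fall into the class of CLT appropriate edge weights of the first type in the sense of Definition~\ref{BBBB}; and second, that the Gaussian field produced by Theorem~\ref{nzgamskl} in these two cases is exactly the stated sum of two correlated standard Brownian motions. The first task is an invariance-principle check, the second is a computation of the covariance structure using the explicit formula for the annealed moments from Theorem~\ref{nzgamskl}.

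For the verification that the weights are CLT appropriate of the first type, one must exhibit, for each of the relevant weight sequences, the deterministic limit of the empirical measures encoding the weights (a law of large numbers) and the Gaussian fluctuation field obtained after centering and rescaling the partial sums by $\sqrt{M}$, \emph{together with convergence of all mixed moments} to the Gaussian ones. For i.i.d.\ weights this is classical: the limit shape is the common mean, Donsker's invariance principle gives a Brownian fluctuation field with diffusion constant equal to the variance of the weights, and convergence in the sense of moments follows from the assumed finiteness of all moments together with standard uniform-integrability bounds for sums of i.i.d.\ variables. For $V$-uniformly ergodic Markov chains one invokes the Markov-chain functional CLT: the drift/minorization conditions underlying $V$-uniform ergodicity yield a solution of the Poisson equation and a martingale approximation, with the asymptotic variance given by the usual Green--Kubo formula; the polynomial moment bounds coming from the drift function $V$ are precisely what is needed to upgrade the weak functional convergence to convergence of all moments, i.e.\ to match the hypotheses of Definition~\ref{BBBB}. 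Cross-covariances between the different families $\{w_i\}, \{u_i\}, \{x_i\}, \{v_i\}$ are read off from their joint law.

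With CLT appropriateness of the first type established, Theorem~\ref{nzgamskl} asserts that the annealed height-function fluctuations occur on scale $\sqrt{M}$ and are Gaussian, with a covariance that is an explicit linear functional of the edge-weight fluctuation field. Since after the standard gauge reduction of the one-periodic Aztec diamond only two effective weight sequences survive, the input field is a pair of Brownian motions, and substituting the covariance data from the previous step into the formula of Theorem~\ref{nzgamskl} shows that the limiting field of the height function is a (deterministic) linear image of two standard Brownian motions; after normalization these are two correlated standard Brownian motions, the correlation being fixed by the joint distribution of the surviving weight sequences. Running the same covariance substitution for concrete choices of joint law then produces the explicit statements recorded in Examples~\ref{ex21} and~\ref{ex22}, extending the i.i.d.\ computations of \cite{B52}.

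The genuinely delicate point is the Markov-chain case: one must show that $V$-uniform ergodicity delivers not merely a functional CLT but convergence of \emph{every} moment of the $\sqrt{M}$-rescaled partial sums to the corresponding Gaussian moment — that is, that such chains really do satisfy the precise conditions in Definition~\ref{BBBB} — which requires extracting polynomial moment bounds from the drift condition and controlling the martingale-approximation error uniformly in $M$. The i.i.d.\ case, by contrast, is routine (Donsker plus classical moment estimates). The remaining work is bookkeeping: tracking how the pairwise covariances of $\{w_i\}, \{u_i\}, \{x_i\}, \{v_i\}$ propagate through the annealed-moment formula of Theorem~\ref{nzgamskl} so that the output collapses to a two-parameter correlated Brownian field rather than a more elaborate Gaussian object.
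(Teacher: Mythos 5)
Your proposal follows essentially the same route as the paper: verify the $2^{-1}$-CLT appropriateness of Definition \ref{BBBB} for the two classes of weights and then specialize the covariance formula of the annealed CLT, Theorem \ref{nzgamskl}, which is exactly what the paper does in Examples \ref{ex21} and \ref{ex22}. The only notable difference is that the paper does not re-derive the moment form of the Markov-chain CLT from drift conditions and a martingale approximation, as you propose; it simply invokes the known result of \cite{B99} (which suffices here since, by Convention \ref{convention}, the weights are bounded).
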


When the CLT is satisfied after scaling by $M$, different Gaussian fields can
appear. In {\cite{B52}} it was shown that for i.i.d. edge weights such that
the probability distribution of each random variable depends on $M$, and its
variance decays like $M^{- 1}$, the Gaussian field of Theorem \ref{Main2} is
again a standard Brownian motion. We consider the case where the edge weights
are described by eigenvalues of the GUE.

\begin{corollary}[GUE random environment; Example \ref{ex24}]
  Assume that the random edge weights of the one-periodic Aztec diamond are
  $\tmmathbf{w}_{M-i+1} =\tmmathbf{l}_i^2 + 1$, for $i = 1, \ldots, M$, where
  $\tmmathbf{l}_1 \leq \cdots \leq \tmmathbf{l}_M$ are the ordered eigenvalues
  of GUE of size $M$, and the rest weights are equal to $1$. Then, the
  annealed fluctuations of the height function occur on scale $M$, and are
  described by the sum of the Gaussian Free Field and an independent Gaussian
  field, whose singularity is weaker than that of the Gaussian Free Field.
\end{corollary}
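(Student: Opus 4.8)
The plan is to deduce the corollary from the annealed CLT of the second type (Theorem \ref{alloena}) by checking that the prescribed weights are CLT appropriate of the second type in the sense of Definition \ref{BBBB}, and then reading off the limiting field and comparing its covariance with that of the Gaussian Free Field. The two external inputs are Wigner's semicircle law and the classical central limit theorem for linear eigenvalue statistics of the GUE.

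First I would verify the hypotheses of Definition \ref{BBBB}. The deterministic limit profile of the (appropriately rescaled) empirical measures of the weights $\{\tmmathbf{w}_j\}_{j=1}^{M}$ is immediate: the (appropriately rescaled) empirical spectral measure of $\tmmathbf{l}_1 \le \cdots \le \tmmathbf{l}_M$ converges almost surely to a semicircle law $\mu_{\mathrm{sc}}$, hence the empirical measure of $\{\tmmathbf{l}_i^2 + 1\}_i$ converges to the pushforward of $\mu_{\mathrm{sc}}$ under $x \mapsto x^2 + 1$, and the full weight profile is the evident mixture of this pushforward with the atom at $1$ coming from the unperturbed block. By Theorem \ref{Main2} (equivalently, the limit-shape part of Theorem \ref{alloena}) this already produces the limit shape of the height function and identifies it with that of the deterministic one-periodic Aztec diamond carrying these limiting weights. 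The second, fluctuation-type requirement of Definition \ref{BBBB} --- a CLT, in the sense of moments, for the partial-sum field of the centered weights after scaling by $M$ --- amounts here, up to the index reflection $i \leftrightarrow M-i+1$, to a statement about $\sum_{i \le \lfloor tM\rfloor}\big(\tmmathbf{l}_i^2 - \mathbb{E}\,\tmmathbf{l}_i^2\big)$, i.e.\ about the linear statistic of the GUE associated with the truncated test function $x \mapsto x^2\,\mathbf{1}_{x \le q(t)}$, where $q(t)$ is the $t$-quantile of $\mu_{\mathrm{sc}}$. The CLT for linear eigenvalue statistics of the GUE --- joint convergence of all mixed moments to those of a Gaussian family, with the explicit covariance of $H^{1/2}$ type, schematically $\iint \big(\tfrac{f(x)-f(y)}{x-y}\big)\big(\tfrac{g(x)-g(y)}{x-y}\big)\,K(x,y)\,dx\,dy$ with $K$ the standard kernel --- supplies precisely the ``CLT in the sense of moments'' demanded, once the jump of the test function at $q(t)$ is accommodated by the usual truncation/approximation and edge estimates. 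This determines the limiting Gaussian field $\mathcal{G}$ of the environment together with its covariance.

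It then remains to run $\mathcal{G}$ through the conclusion of Theorem \ref{alloena}: the annealed fluctuations of the height function, on scale $M$, converge to the sum of the Gaussian Free Field --- in the conformal structure of the liquid region of the limiting deterministic model --- and an independent Gaussian field obtained by transporting $\mathcal{G}$ through the explicit change of variables of the model that relates the weight coordinate to the spatial coordinate on the Aztec diamond. Finally one compares the two covariance kernels. The Gaussian Free Field kernel is the Green's function of the Laplacian and carries the full two-dimensional logarithmic on-diagonal blow-up $-\tfrac{1}{2\pi}\log|z-w|$; the transported kernel, inherited from the one-variable $H^{1/2}$ covariance of $\mathcal{G}$ composed with the (smooth) transport maps, is singular only on the codimension-one locus where the two arguments have the same image under the transport map, and there with a strictly milder rate. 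Hence its singularity is weaker than that of the Gaussian Free Field, as claimed.

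The step I expect to be the main obstacle is the fluctuation input together with its transport: obtaining the GUE linear-statistics CLT in exactly the moment form required by Definition \ref{BBBB} for the discontinuous partial-sum test functions --- the contributions near the cutoff $q(t)$ are where the variance would, if left uncontrolled, pick up its logarithmic growth, and these must be tracked precisely --- and then carrying the covariance of $\mathcal{G}$ faithfully through the model's change of variables, so that the comparison of singularities in the last step is genuine rather than heuristic.
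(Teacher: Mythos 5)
Your skeleton is the same as the paper's treatment in Example \ref{ex24}: verify $1$-CLT appropriateness via the semicircle law plus a GUE linear-statistics CLT, feed the limiting environment covariance into the second-regime annealed CLT (Theorems \ref{gmstrs}, \ref{normal2!}, \ref{alloena}), and then compare the resulting covariance with the Gaussian Free Field part. However, the probabilistic input you propose is not the right one, and as formulated this step would fail. Definition \ref{BBBB} with $\varepsilon=1$ requires the \emph{unnormalized} centered partial sums over the ordered eigenvalues, i.e.\ $\sum_{i\le\lfloor tM\rfloor}\bigl(g(\tmmathbf{l}_i)-\mathbb{E}[g(\tmmathbf{l}_i)]\bigr)$ with $g$ the bounded function entering through $\tmmathbf{\beta}_{M-i+1}=(\tmmathbf{l}_i^2+1)/(\tmmathbf{l}_i^2+2)$ (the moment conditions and Convention \ref{convention} are phrased for $\tmmathbf{\beta}$, not for $\tmmathbf{l}_i^2$ itself). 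This is \emph{not} the linear statistic with the hard-truncated test function $g(x)\mathbf{1}_{x\le q(t)}$: that statistic has logarithmically divergent variance and satisfies a CLT only after dividing by $\sqrt{\log M}$, so no smoothing/approximation of a fixed discontinuous test function can deliver the $O(1)$, moment-sense convergence that $1$-CLT appropriateness demands. What makes the limits $\hat{\mathfrak{q}}_{k,l}$ finite is the cancellation between the truncated statistic and the fluctuation of the empirical quantile (the cutoff is at the random location $\tmmathbf{l}_{\lfloor tM\rfloor}$, not at $q(t)$); this is precisely the content of the CLT for \emph{partial} linear eigenvalue statistics of Bao--Pan--Zhou {\cite{B97}} that the paper invokes, and its covariance formula (\ref{CVCVCB}) contains the extra boundary integral at $\varepsilon_\gamma$ recording exactly this cancellation.

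The second gap is the final claim that the environment field's singularity is weaker than the GFF's. In the paper this is read off from the explicit closed form of (\ref{CVCVCB}): the environment kernel in the contour variables decomposes into analytic factors multiplied by $(z-w)^{-2}$ and $(z-w)^{-1}$, and the diagonal behavior (keyed to $\varepsilon_\gamma\neq 2$) is checked from these formulas. Your argument --- that a smooth transport of a one-variable $H^{1/2}$-type covariance is automatically milder than the two-dimensional logarithmic singularity --- is a heuristic, not a proof: the environment covariance can and does carry a $(z-w)^{-2}$ prefactor (see the full-spectrum computation at the end of Example \ref{ex24}), so whether the on-diagonal singularity is strictly weaker must be extracted from the explicit kernel rather than from a codimension count. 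In short, you have located the two delicate points correctly, but the mechanism you propose for the first (smoothing the indicator) points in the wrong direction, and the second is asserted rather than derived; both are supplied in the paper by the explicit partial-linear-statistics covariance.
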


Furthermore, we consider cases in which the annealed height fluctuations
exhibit non-Gaussian behavior. More precisely the Gaussian Free Field
fluctuations are perturbed by a random discrete component that describes
fluctuations caused by extremal particles on one-dimensional slices of the Aztec diamond (see
Theorem \ref{PoissonCLT} and Remark \ref{PoissonRemark}).

\begin{theorem}[Informal version of Theorem \ref{PoissonCLT}]
	Let $\Xi_M$ be a binomial random variable with parameters $M, p_M$, such that
	$\lim_{M \rightarrow \infty} M p_M =l \in (0, \infty)$. Consider
	the one-periodic Aztec diamond with random edge weights
	\[ (\tmmathbf{w}_M, \ldots, \tmmathbf{w}_1) = \left( \left( \frac{a}{1 - a}
	\right)^{\Xi_M}, \left( \frac{b}{1 - b} \right)^{M - \Xi_M} \right), \]
	where $a, b \in (0, 1)$ are fixed. Then, the annealed fluctuations of the
	height function occur on scale $M$ and are described by the Gaussian Free
	Field and an independent Poisson component.
\end{theorem}

The organization of this paper is as follows: In Section \ref{S2} we give some
preliminary material, related to interlacing signatures, Schur measures, and
their well-known connection to the one-periodic Aztec diamond. Before we prove
the quenched CLT (Theorem \ref{NMNm}), we first focus on the extension of the
annealed version, and develop a common framework of arguments that jointly
prove both versions. In Section \ref{S3}, we present some technical lemmas for
Schur measures with deterministic parameters, that will be useful for studying
the model with random edge weights. In Theorem \ref{LLN!} of this section, we
prove the Law of Large Numbers for the height function (along a given
one-dimensional slice) of the model with CLT appropriate edge weights. The
limit shape obtained here generalizes the analogous result of {\cite{B52}}. In
Section \ref{LLL}, we study the annealed global fluctuations of the height
function at one level, for the model with CLT appropriate edge weights of two
types. We show that the fluctuations converge to a Gaussian vector (see
Theorems \ref{normal1!}, \ref{normal2!}) with covariance structure determined
by the type of edge weights. In Section \ref{S5}, we focus on the multi-level
analogues of Theorems \ref{normal1!}, \ref{normal2!}. We consider
specific examples (see Examples \ref{ex21}, \ref{ex22}, \ref{ex24}), where we
analyze the annealed fluctuations. Moreover, we introduce and study examples of one-periodic random edge weights that lead to non-Gaussian annealed height fluctuations (see Theorem \ref{PoissonCLT}). Finally, Section \ref{S6} is devoted to the
quenched CLT for the height function at several levels (see Theorem
\ref{NMNm}).

\begin{acknowledgments*}
  The author is grateful to Alexey Bufetov for many valuable discussions and
  to Fabio Toninelli for bringing to his attention the quenched version. The
  author was partially supported by the European Research Council (ERC), Grant
  Agreement No. 101041499.
\end{acknowledgments*}

\section{Preliminaries}\label{S2}

In this section, we recall the well known relation between random dimer
coverings of the Aztec diamond with one-periodic weights and the Schur
process. This will allow to address the questions about random dimer
coverings.

\subsection{Dimer coverings of the Aztec diamond and
signatures}\label{mnarnths}

Given a positive integer $N$, an $N$-tuple of non-increasing integers $\lambda
= (\lambda_1 \geq \lambda_2 \geq \cdots \geq \lambda_N)$ is called a signature
of length $N$. We denote by $\mathbb{G}\mathbb{T}_N$ the set of all signatures
of length $N$. The weight of $\lambda$ is given by $| \lambda | \assign
\lambda_1 + \cdots + \lambda_N$. Moreover, given $\lambda \in
\mathbb{G}\mathbb{T}_N$, we will denote by $\lambda' \in
\mathbb{G}\mathbb{T}_{\lambda_1}$ its conjugate. We will use the notation
$\lambda \succ \mu$ for signatures $\lambda \in \mathbb{G}\mathbb{T}_N$ and
$\mu \in \mathbb{G}\mathbb{T}_{N - 1}$ that interlace, namely $\lambda_i \geq
\mu_i \geq \lambda_{i + 1}$, for all $i = 1, \ldots, N - 1$. We will also use
the notation $\theta \succ_v \lambda$ for signatures $\theta, \lambda \in
\mathbb{G}\mathbb{T}_N$ that interlace vertically, meaning that $\theta_i -
\lambda_i \in \{ 0, 1 \}$, for all $i = 1, \ldots, N$.

It is well known that dimer coverings of the Aztec diamond are in bijection
with collections of signatures $\lambda^{(t)}, \theta^{(t)} \in
\mathbb{G}\mathbb{T}_t$ that satisfy certain interlacing properties, and the
one-periodic probability measure on dimer coverings can be described
explicitly under this bijection.

\begin{proposition}
  \label{-+}Let $\{ x_i, w_i, u_i, v_i \}_{i = 1}^M$ be positive real numbers.
  \begin{enumerate}
    {\item There is a bijection between dimer coverings of the Aztec
    diamond of size M and collections of signatures $\{ \lambda^{(i)},
    \theta^{(i)} \}_{i = 1}^M$, where $\lambda^{(i)}, \theta^{(i)} \in
    \mathbb{G}\mathbb{T}_i$, and
    \begin{equation}
      \emptyset \prec \theta^{(1)} \succ_{\nu} \lambda^{(1)} \prec
      \theta^{(2)} \succ_{\nu} \lambda^{(2)} \prec \cdots \prec \theta^{(M -
      1) } \succ_{\nu} \lambda^{(M - 1)} \prec \theta^{(M)} \succ_{\nu}
      \lambda^{(M)} = \emptyset . \label{PRWTO}
    \end{equation}}
    
    \item Under the previous bijection, the product of edge weights of a dimer
    covering $\{ \lambda^{(i)}, \theta^{(i)} \}_{i = 1}^M$ of the Aztec
    diamond, with edge weights as in Figure \ref{Fig1}, is equal to
    \[ \prod_{i = 1}^M v_i^{M - i + 1} u_i^i  \left( \frac{w_i}{u_i}
       \right)^{| \theta^{(i)} | - | \lambda^{(i)} |}  \left( \frac{x_i}{v_i}
       \right)^{| \theta^{(i)} | - | \lambda^{(i - 1)} |}, \]
    and the one-periodic probability measure $\mathbb{P}_{x, w, u, v}$ on
    collections of signatures (\ref{PRWTO}), is given by
    \[ \mathbb{P}_{x, w, u, v} (\{ \lambda^{(i)}, \theta^{(i)} \}_{i = 1}^M) =
       \prod_{i = 1}^M \left( \frac{w_i}{u_i} \right)^{| \theta^{(i)} | - |
       \lambda^{(i)} |} \left( \frac{x_i}{v_i} \right)^{| \theta^{(i)} | - |
       \lambda^{(i - 1)} |} \text{{\hspace{9em}}} \]
    \begin{equation}
      \text{{\hspace{11em}}} \times \prod_{1 \leq i \leq j \leq M} \left( 1 +
      \frac{x_i w_j}{v_i u_j} \right)^{- 1} . \label{HNHNHN}
    \end{equation}
  \end{enumerate}
\end{proposition}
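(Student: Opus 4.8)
The plan is to build the bijection in two symmetric halves and then track the edge weights carefully. First I would recall the standard particle/signature encoding of a dimer covering of the Aztec diamond: reading the diamond along successive diagonals, one records on each diagonal the positions of a fixed color of vertex that is matched "upward" versus "downward." This produces a zigzag sequence of signatures, and the key combinatorial fact is that the two types of allowed local moves between adjacent diagonals correspond exactly to the two interlacing relations $\prec$ (horizontal, going from length $i-1$ to length $i$) and $\succ_{\nu}$ (vertical, staying in length $i$). Concretely, I would fix the convention that $\theta^{(i)}$ records the configuration just before the $i$-th "growth" step and $\lambda^{(i)}$ just after the vertical step, so that the chain in \eqref{PRWTO} is forced: emptiness at both ends is automatic because at size $0$ there are no particles and at the last diagonal the matching is frozen. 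The injectivity and surjectivity of the map are then checked locally, diagonal by diagonal — each allowed interlacing pattern corresponds to a unique choice of dominoes covering a $2\times 2$ block, which is the classical "domino shuffling" description.

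For part (2), I would assign to each edge of the one-periodic graph its weight from $\{x_i,w_i,u_i,v_i\}$ according to Figure \ref{Fig1}, and then express the product $\prod_{e\in D}W(e)$ in terms of the signature data. The crucial bookkeeping is that the exponent with which each group of weights appears is a sum over the diagonals of the number of dominoes of a given type, and these counts are precisely $|\theta^{(i)}|-|\lambda^{(i)}|$ (the "vertical defect" at step $i$) and $|\theta^{(i)}|-|\lambda^{(i-1)}|$ (the "horizontal defect"), up to the deterministic baseline $v_i^{M-i+1}u_i^i$ coming from the reference matching where all defects vanish. I would verify this identity on the size-$3$ example of Figure \ref{Fig1} as a sanity check: the stated product $u_3v_3u_2u_1v_1^2x_2^2x_1w_3^2w_2$ should match the formula once the interlacing data of that covering is read off.

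For the probability measure \eqref{HNHNHN}, I would combine the weight formula with the known value of the partition function $Z$. The normalization is exactly where the Cauchy-type identity for Schur functions (equivalently, the $\prod_{1\le i\le j\le M}\bigl(1+\tfrac{x_iw_j}{v_iu_j}\bigr)$ product, which is the one-periodic specialization of the Aztec partition function via the Kuo condensation / vertex-operator computation) enters: summing the monomial $\prod_i(w_i/u_i)^{|\theta^{(i)}|-|\lambda^{(i)}|}(x_i/v_i)^{|\theta^{(i)}|-|\lambda^{(i-1)}|}$ over all interlacing chains \eqref{PRWTO} telescopes into a product of geometric series, yielding that product. Dividing the weight of a single configuration by $Z$, the deterministic prefactor $\prod_i v_i^{M-i+1}u_i^i$ cancels and one is left with \eqref{HNHNHN}.

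The main obstacle I anticipate is not any single estimate but getting the conventions exactly right: which vertex color is tracked, which direction counts as the vertical interlacing, and how the one-periodic weight labels in Figure \ref{Fig1} line up with the growth steps. A sign or index shift there propagates into the wrong exponents $M-i+1$ versus $i$, or swaps $\lambda^{(i-1)}$ with $\lambda^{(i)}$. I would pin this down once and for all on the size-$3$ picture and then argue the general case by the inductive local move. Everything after that — the telescoping sum for $Z$ — is a routine Cauchy-identity computation.
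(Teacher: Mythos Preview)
Your proposal is correct in outline and in fact supplies far more than the paper does: the paper's entire proof of this proposition is a two-line citation, deferring both the bijection and the weight computation to Johansson \cite{B71} and the exposition in Bufetov--Knizel \cite{B45}. What you have sketched---the diagonal-by-diagonal particle reading, the identification of the two interlacing types $\prec$ and $\succ_\nu$ with the two local domino moves, the bookkeeping of edge-type counts as differences $|\theta^{(i)}|-|\lambda^{(i)}|$ and $|\theta^{(i)}|-|\lambda^{(i-1)}|$, and the Cauchy-type summation for the partition function---is precisely the content of those references, so there is no methodological divergence to report. Your caution about fixing conventions on the size-$3$ example is well placed; that is exactly where the standard expositions spend their effort, and once the labels are anchored the general argument is the local induction you describe.
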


\begin{proof}
  This is a classical result {\cite{B71}}. For a detailed exposition of this
  construction, we refer to {\cite{B45}}.
\end{proof}

Since the one-periodic probability measure (\ref{HNHNHN}) depends on the
ratios $w_i / u_i$, $x_i / v_i$, we will assume that $u_i = v_i = 1$, for all
$i = 1, \ldots, M$.

\begin{notation}
  We will denote by $\mathbb{P}_{\beta, y}$ the probability measure
  (\ref{HNHNHN}) of Proposition \ref{-+}, where $w_i = \beta_i / (1 -
  \beta_i)$, $x_i = 1 - y_i$, and $u_i = v_i = 1$, for all $i = 1, \ldots, M$.
\end{notation}

As already mentioned, the variables $\beta_1, \ldots, \beta_M, x_1, \ldots,
x_M$ will be random. When we want to emphasize on it, we will denote them by
$\tmmathbf{\beta}_1, \ldots, \tmmathbf{\beta}_M, \tmmathbf{x}_1, \ldots,
\tmmathbf{x}_M$. Throughout this paper, we will also make the following
convention.

\begin{convention}
  \label{convention}We assume that the distribution of $\tmmathbf{\beta}_i$ is
  supported on $(0, 1)$, and the distribution of $\tmmathbf{x}_i$ is supported
  on $(\delta, 2 - \delta)$, for some $\delta \in (0, 2)$, and all $i = 1,
  \ldots, M$.
\end{convention}

The dimer coverings of the Aztec diamond are determined by a height function
{\cite{B86}}. Our asymptotic results in the next sections can be described in
terms of this function. For a positive integer $N < M$, we will encode the
random signature $\lambda^{(N)}$ of (\ref{PRWTO}), that describes the
positions of dimers at the $N$th level of the graph, via the random empirical
measure
\[ m [\lambda^{(N)}] \assign \frac{1}{N} \sum_{i = 1}^N \delta \left(
   \frac{\lambda_i^{(N)} + N - i}{N} \right) . \]
Let $F_{m [\lambda^{(N)}]} (x)$ be the cumulative distribution function of $m
[\lambda^{(N)}]$. Then, the (integer-valued) height function of the dimer
coverings over the Aztec diamond along the $N$th level, is given by $N \text{}
F_{m [\lambda^{(N)}]} (x / N)$. In the following, we will focus on the
asymptotic behavior of the random measures $m [\lambda^{(N)}]$, as $N, M
\rightarrow \infty$. In our Central Limit Theorems (see Theorems
\ref{nzgamskl}, \ref{alloena}, \ref{PoissonCLT},  \ref{NMNm}) we will study the asymptotic
behavior of the moments of the random height function
\[ \int_{- \infty}^{\infty} t^k d [N (F_{m [\lambda^{(N)}]} (t) -\mathbb{E}
   [F_{m [\lambda^{(N)}]} (t)])] = \sum_{i = 1}^N \left[ \left(
   \frac{\lambda_i^{(N)} + N - i}{N} \right)^k -\mathbb{E} \left[ \left(
   \frac{\lambda_i^{(N)} + N - i}{N} \right)^k \right] \right] . \]
The expectations in the above formula can be either quenched (i.e. for a given
realization of disorder), or annealed (i.e. when we also average with respect
to the randomness of the edge weights).

\subsection{The one-periodic Aztec diamond and Schur functions}

Given a signature $\lambda \in \mathbb{G}\mathbb{T}_N$, its Schur function
$s_{\lambda}$ is a symmetric Laurent polynomial in $x_1, \ldots, x_N$ of
degree $| \lambda |$, defined by
\[ s_{\lambda} (x_1, \ldots, x_N) \assign \frac{\det (x_i^{\lambda_j + N -
   j})_{1 \leq i, j \leq N}}{\det (x_i^{N - j})_{1 \leq i, j \leq N}} =
   \frac{\det (x_i^{\lambda_j + N - j})_{1 \leq i, j \leq N}}{\prod_{1 \leq i
   < j \leq N} (x_i - x_j)} . \]
The one-periodic probability measure $\mathbb{P}_{x, w, u, v}$ of Proposition
\ref{-+} is strongly related to Schur functions. More precisely,
$\mathbb{P}_{x, w, u, v}$ is an example of a Schur process {\cite{B87}},
{\cite{B88}}, where the corresponding skew Schur functions are functions of
one variable, which is either $\frac{w_i}{x_i}$ or $\frac{u_i}{v_i}$
{\cite{B73}}. A standard computation implies that
\[ \sum_{\theta^{(1)}, \lambda^{(1)}, \ldots, \theta^{(N - 1)}, \lambda^{(N -
   1)}, \theta^{(N)}} \prod_{i = 1}^N a_i^{| \theta^{(i)} | - | \lambda^{(i)}
   |} \cdot \prod_{i = 1}^N b_i^{| \theta^{(i)} | - | \lambda^{(i - 1)} |}
   \text{{\hspace{12em}}} \]
\begin{equation}
  \text{{\hspace{12em}}} = \prod_{1 \leq i \leq j \leq N} (1 + b_i a_j) \cdot
  s_{\lambda^{(N)}} (b_1, \ldots, b_N), \label{2+2=4}
\end{equation}
where the sum is taken with respect to all $\theta^{(1)}, \lambda^{(1)},
\ldots, \theta^{(N - 1)}, \lambda^{(N - 1)}, \theta^{(N)}$, such that
$\lambda^{(i)}, \theta^{(i)} \in \mathbb{G}\mathbb{T}_i$ and $\emptyset \prec
\theta^{(1)} \succ_{\nu} \lambda^{(1)} \prec \cdots \prec \theta^{(N)}
\succ_{\nu} \lambda^{(N)}$, where $\lambda^{(N)} \in \mathbb{G}\mathbb{T}_N$
is fixed. This implies that for $N < M$, the marginal distribution of
$\lambda^{(N)}$ with respect to $\mathbb{P}_{\beta, y}$, is given by the Schur
measure
\[ \rho_{\beta, y} [\lambda] \assign \prod_{i = 1}^N \prod_{j = N + 1}^M (1 +
   w_j x_i)^{- 1} \cdot s_{\lambda} (x_1, \ldots, x_N) s_{\lambda^{'}} \left(
   w_{N + 1} {, \ldots, w_M}  \right) \text{, \quad for } \lambda \in
   \mathbb{G}\mathbb{T}_N . \]
In the next sections, we will study the global asymptotic behavior of such
random signatures, where the parameters $\{ x_i \}_{i = 1}^M, \{ w_i \}_{i =
1}^M$ are random.

\section{The Law of Large Numbers}\label{S3}

In the current section, we prove the Law of Large Numbers. A key feature of
our result is that we do not assume independence for the random weights. This
allows us to analyze the one-periodic Aztec diamond when all the weights are
random and in general, the randomness of the weights is not fixed.

Our proof relies on the determinantal form of the Schur functions. The method
that we use was developed in {\cite{B1}}, {\cite{B3}} and it is related to
Schur generating functions of probability measures on signatures. Schur
measures such that all their parameters are random cannot be analyzed by Schur
generating functions, but it is still possible to make use of the Cauchy
identity.

\subsection{Differential operators}

We recall that we focus on the asymptotics of the random empirical measure
\[ m_N [\overline{\rho_{\tmmathbf{\beta}, \tmmathbf{y}}}] \assign \frac{1}{N}
   \sum_{i = 1}^N \delta \left( \frac{\lambda_i + N - i}{N} \right), \]
where $\lambda = (\lambda_1 \geq \cdots \geq \lambda_N)$ is a random signature
distributed according to $\overline{\rho_{\tmmathbf{\beta}, \tmmathbf{y}}}
[\lambda] \assign \mathbb{E} [\rho_{\tmmathbf{\beta}, \tmmathbf{y}}
[\lambda]]$. The parameters $\{\tmmathbf{x}_i \}_{i = 1}^N$,
$\{\tmmathbf{\beta}_j \}_{j = N + 1}^M$ are random, and $\tmmathbf{y}_i = 1
-\tmmathbf{x}_i$. The moments of the empirical measure can be computed using
differential operators acting on analytic functions of $N$ variables,
\[ \mathcal{D}_k (f) (u_1, \ldots, u_N) \assign \frac{1}{V_N (\vec{u})}
   \sum_{i = 1}^N (u_i \partial_i)^k [V_N (\vec{u}) f (u_1, \ldots, u_N)], \]
where $V_N (\vec{u}) \assign \prod_{1 \leq i < j \leq N} (u_i - u_j)$, and
$\partial_i$ denotes the derivative with respect to $u_i$. Note that the
operators $\mathcal{D}_k$ can be simplified using that
\begin{equation}
  \sum_{i = 1}^N (u_i \partial_i)^k = \sum_{i = 1}^N \sum_{m = 1}^k S (k, m)
  u_i^m \partial_i^m, \label{Stirling}
\end{equation}
where $S (k, m)$ are the Stirling numbers of second kind. These operators are
useful because of the following fact.

\begin{proposition}
  \label{Prop1}For $\lambda \in \mathbb{G}\mathbb{T}_N$, we have
  \[ \mathcal{D}_k [s_{\lambda}] = \sum_{i = 1}^N (\lambda_i + N - i)^k
     s_{\lambda} . \]
\end{proposition}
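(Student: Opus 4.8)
The plan is to verify that the Schur function $s_\lambda$ is an eigenfunction of each operator $\mathcal{D}_k$ with the claimed eigenvalue, by exploiting the determinantal (bialternant) form of $s_\lambda$. Write $s_\lambda(u_1,\dots,u_N) = A_\lambda(\vec u)/V_N(\vec u)$, where $A_\lambda(\vec u) \assign \det\big(u_i^{\lambda_j+N-j}\big)_{1\le i,j\le N}$ is the antisymmetric numerator and $V_N(\vec u) = \det\big(u_i^{N-j}\big) = \prod_{i<j}(u_i-u_j)$ is the Vandermonde. By definition of $\mathcal{D}_k$, we have
\[
\mathcal{D}_k[s_\lambda](u_1,\dots,u_N) = \frac{1}{V_N(\vec u)}\sum_{i=1}^N (u_i\partial_i)^k\big[V_N(\vec u)\, s_\lambda(\vec u)\big] = \frac{1}{V_N(\vec u)}\sum_{i=1}^N (u_i\partial_i)^k A_\lambda(\vec u),
\]
so the whole computation reduces to applying $\sum_i (u_i\partial_i)^k$ to the single determinant $A_\lambda$.

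The key observation is that the Euler-type operator $u_i\partial_i$ acts diagonally on monomials in $u_i$: for any exponent $a$, $(u_i\partial_i)^k u_i^a = a^k u_i^a$. Expanding $A_\lambda$ along the multilinearity of the determinant, $A_\lambda(\vec u) = \sum_{\sigma \in S_N} \operatorname{sgn}(\sigma)\prod_{i=1}^N u_i^{\lambda_{\sigma(i)}+N-\sigma(i)}$, each term is a product of powers of distinct variables, and applying $(u_i\partial_i)^k$ to such a product multiplies it by $\big(\lambda_{\sigma(i)}+N-\sigma(i)\big)^k$. Summing over $i$ gives the factor $\sum_{i=1}^N \big(\lambda_{\sigma(i)}+N-\sigma(i)\big)^k = \sum_{j=1}^N (\lambda_j+N-j)^k$, which is independent of the permutation $\sigma$. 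Therefore
\[
\sum_{i=1}^N (u_i\partial_i)^k A_\lambda(\vec u) = \Big(\sum_{j=1}^N (\lambda_j+N-j)^k\Big) A_\lambda(\vec u),
\]
and dividing by $V_N(\vec u)$ yields $\mathcal{D}_k[s_\lambda] = \big(\sum_{j=1}^N (\lambda_j+N-j)^k\big) s_\lambda$, as claimed. The simplification (\ref{Stirling}) via Stirling numbers is not needed for this argument, though one could equivalently check that $\sum_m S(k,m) u_i^m\partial_i^m u_i^a = \sum_m S(k,m) a(a-1)\cdots(a-m+1) u_i^a = a^k u_i^a$ by the standard identity $a^k = \sum_m S(k,m) (a)_m$ for falling factorials.

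There is no serious obstacle here; the only point requiring a little care is the legitimacy of pulling $(u_i\partial_i)^k$ inside the determinant and acting term by term — but this is immediate from linearity of differentiation and the monomial expansion of $A_\lambda$. One should also note that $A_\lambda$ and $V_N$ are genuine Laurent polynomials (finitely many monomials), so all manipulations are formal and valid wherever the $u_i$ are distinct and nonzero, which suffices since $s_\lambda$ extends to a symmetric Laurent polynomial by continuity.
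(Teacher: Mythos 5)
Your proof is correct and is exactly the argument the paper has in mind when it calls the result ``immediate from the definition of Schur functions'': cancel the Vandermonde against the denominator of the bialternant formula, expand the numerator determinant into monomials, and use that $(u_i\partial_i)^k$ acts on $u_i^a$ with eigenvalue $a^k$, the total eigenvalue $\sum_j(\lambda_j+N-j)^k$ being independent of the permutation. Nothing further is needed.
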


\begin{proof}
  Immediate from the definition of Schur functions.
\end{proof}

For the following, it is important to understand how $\mathcal{D}_k$ acts on
analytic and symmetric functions. Although, due to (\ref{Stirling}) it
suffices to do it for the differential operators $D_k$, where
\[ D_k (f) (u_1, \ldots, u_N) \assign \frac{1}{V_N (\vec{u})} \sum_{i = 1}^N
   u_i^k \partial_i^k [V_N (\vec{u}) f (u_1, \ldots, u_N)] . \]

\subsection{Some technical lemmas}

Throughout this paper, given positive integers $N, M$ with $N < M$, we will
denote by $[N]$ the finite set $\{ 1, 2, \ldots, N \}$, and we will denote by
$[N ; M]$ the finite set $\{ N + 1, N + 2, \ldots, M \}$. Before we prove our
main result for Schur measures with random parameters, we formulate and prove
some lemmas about Schur measures $\rho_{\beta, y}$, but for the special case
where the parameters $\beta = \{\beta_j \}_{j = N + 1}^M$, $y = \{y_i \}_{i =
1}^N$ are deterministic and such that $0 < \beta_j < 1$ and $- 1 < y_i < 1$.

We recall the (dual) Cauchy identity
\begin{equation}
  \sum_{\lambda \in \mathbb{G}\mathbb{T}_N} \prod_{j = N + 1}^M (1 -
  \beta_j)^N s_{\lambda} (x_1, \ldots, x_N) s_{\lambda'} \left(
  \frac{\beta_M}{1 - \beta_M}, \ldots, \frac{\beta_{N + 1}}{1 - \beta_{N + 1}}
  \right) = \prod_{i = 1}^N \prod_{j = N + 1}^M (1 - \beta_j + \beta_j x_i) .
  \label{CI}
\end{equation}
In the following, we will denote $f (x_1, \ldots, x_N) \assign \prod_{i = 1}^N
\prod_{j = N + 1}^M (1 - \beta_j + \beta_j x_i)$. Due to Proposition
\ref{Prop1}, the expectation of $\sum_{i = 1}^N (\lambda_i + N - i)^k$ with
respect to $\rho_{\beta, y}$ is given by $f^{- 1} \mathcal{D}_k f$, where
$\mathcal{D}_k$ acts on the variables $x_1, \ldots, x_N$. The following lemma
gives a formula for this expectation which will be appropriate for asymptotic
analysis later on.

\begin{lemma}
  \label{Lem2}Consider the operators $D_k$ that act on the variables $x_1,
  \ldots, x_N$, and $y_i = 1 - x_i$. Then, we have that
  \[ \mathcal{F}_k \assign \frac{D_k f}{f} = \frac{1}{2 \pi i} \sum_{m = 0}^k
     \binom{k}{m} (- 1)^m m! \sum_{\{l_0, \ldots, l_m \} \subseteq [N]}
     \sum_{\underset{i_s \neq i_r \text{ for } s \neq r}{i_1, \ldots, i_{k -
     m} \in [N ; M]}} \]
  \begin{equation}
    \oint \frac{(1 - z)^k}{(z - y_{l_0}) \ldots (z - y_{l_m})} 
    \frac{\beta_{i_1} \ldots \beta_{i_{k - m}}}{(1 - \beta_{i_1} z) \ldots (1
    - \beta_{i_{k - m}} z)} d \text{} z, \label{vfsgmw}
  \end{equation}
  where the contour is positively oriented, it encircles all the $y_i$ but it
  does not encircle any of the $\beta_j^{- 1}$.
\end{lemma}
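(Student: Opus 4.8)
The plan is to compute the ratio $D_k f / f$ directly from the definition of $D_k$, exploiting the product structure of $f$. First I would observe that $D_k(f)/f = \frac{1}{V_N(\vec{x})}\sum_{i=1}^N x_i^k \partial_i^k[V_N(\vec{x}) f(\vec{x})]/f(\vec{x})$, so the task reduces to understanding $\partial_i^k[V_N f]$. Since $f$ factors as $\prod_{i=1}^N g_i(x_i)$ with $g_i(x) = \prod_{j=N+1}^M(1-\beta_j+\beta_j x)$ (note $g_i$ is actually independent of $i$, but keeping the index is harmless), the $i$-th derivative only sees the $i$-th factor of $f$ and the Vandermonde. The standard trick here (this is exactly the method of \cite{B1}, \cite{B3}) is to write $\partial_i^k$ acting on a product via a contour-integral/Leibniz representation: $\partial_i^k[h(x_i)] = \frac{k!}{2\pi i}\oint \frac{h(z)}{(z-x_i)^{k+1}}\,dz$, and then trade the high-order pole at $z = x_i$ against the simple poles coming from $V_N$ and from the rational structure of the integrand after the substitution $y_i = 1 - x_i$.

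The key computational steps, in order: (1) Apply the Leibniz rule to split $\partial_i^k[V_N(\vec{x}) f(\vec{x})] = \sum_{m=0}^k \binom{k}{m}\partial_i^{k-m}[f]\cdot\partial_i^m[V_N]$ — wait, more precisely split according to how many derivatives hit $V_N$ versus the $i$-th factor $g_i(x_i)$ of $f$; the other factors of $f$ are constant under $\partial_i$ and cancel with $f$ in the denominator. (2) For the part $\partial_i^m[V_N]/V_N$, use that $\partial_i \log V_N(\vec{x}) = \sum_{l\neq i}\frac{1}{x_i - x_l}$ and more usefully represent $\frac{\partial_i^m V_N}{V_N}$ via a contour integral picking up the simple poles at $z = x_l$; after the change of variables to $y$-coordinates these become poles at $z = y_l$, producing the sum over subsets $\{l_0,\ldots,l_m\}\subseteq[N]$ and the factor $\prod_s (z - y_{l_s})^{-1}$, together with the $(-1)^m m!$ and the $\binom{k}{m}$. (3) For the part $\partial_i^{k-m}[g_i(x_i)]$, differentiate the product $\prod_{j=N+1}^M(1-\beta_j+\beta_j x_i)$ exactly $k-m$ times: each derivative selects one factor to hit (bringing down $\beta_j$ and killing the $x_i$-dependence of that factor up to the constant, or rather reducing its degree), yielding the sum over distinct indices $i_1,\ldots,i_{k-m}\in[N;M]$ and the product $\prod_s \beta_{i_s}$; the remaining undifferentiated factors recombine, via the $\oint$, with $1/g_i$ to give $\prod_s(1-\beta_{i_s} z)^{-1}$. (4) Finally, the $x_i^k = (1-z)^k$ factor (after using $x_i = 1 - y_i$ and the residue substitution) and the overall $\frac{1}{2\pi i}\oint$ assemble into \eqref{vfsgmw}; the contour must enclose the poles at $z = y_l$ (so it encircles all $y_i$) but avoid the poles at $z = \beta_j^{-1}$, which is consistent with the stated hypotheses $0 < \beta_j < 1$ and $-1 < y_i < 1$, since then $\beta_j^{-1} > 1 > y_i$.

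I expect the main obstacle to be bookkeeping the Leibniz split cleanly: one must carefully track which factors of $f$ (only the $i$-th) are differentiated, justify that the others cancel, and correctly convert the iterated derivative of the finite product $\prod_{j}(1-\beta_j + \beta_j x_i)$ into a sum over \emph{ordered-then-symmetrized} distinct tuples $(i_1,\ldots,i_{k-m})$ — the factor $m!$ and the binomial coefficients have to come out with the right multiplicities. A secondary subtlety is the simultaneous handling of the Vandermonde poles: one wants a single contour integral in $z$ whose residues at $z = y_{l_0},\ldots,y_{l_m}$ reproduce $\partial_i^m V_N / V_N$ after the $y$-substitution, which requires recognizing that the Vandermonde contributes exactly the simple poles $(z-y_l)^{-1}$ and that choosing a multiset (here a set, since we want distinct $l$'s — the coincident-$l$ contributions vanish or are absorbed) of size $m+1$ is the correct combinatorial object. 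Once the two pieces are each in contour-integral form with a common contour, combining them and reading off $x_i^k = (1-z)^k$ is routine. Throughout I would keep $y_i = 1 - x_i$ as the working variable from the start to avoid a change of variables at the end, and I would first do the case $k=1$ by hand to fix all constants before writing the general induction/Leibniz argument.

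\begin{proof}

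\end{proof}
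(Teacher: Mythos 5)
Your plan follows the paper's proof essentially verbatim: Leibniz-split the $k$ derivatives between the Vandermonde (whose differentiated factors, after dividing by $V_N$, give the simple-pole products over distinct indices in $[N]$, with the $\binom{k}{m}$, $(-1)^m$ and $m!$) and the single-variable factor of $f$ (whose logarithmic derivatives give the $\beta$-products over distinct indices in $[N;M]$), then use the residue theorem to assemble everything into one contour integral with $x_{l_0}^k=(1-z)^k$, poles at the $y_l$'s, and none at the $\beta_j^{-1}$'s. One small correction to your wording, which is not a gap: the Cauchy integral formula for $\partial_i^k$ plays no role, and the contour integral does not represent $\partial_i^m V_N/V_N$ for a fixed $i$ — rather, for a fixed $(m+1)$-element subset $\{l_0,\ldots,l_m\}$, its residues at $z=y_{l_0},\ldots,y_{l_m}$ collect the $m+1$ terms of the outer sum over the differentiated variable, exactly as in the paper's symmetrization step.
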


\begin{proof}
  It is immediate that
  \[ D_k f = \sum_{m = 0}^k \binom{k}{m} \sum_{\underset{l_s \neq l_r \text{
     for } s \neq r}{l_0, \ldots, l_m = 1}}^N \frac{x_{l_0}^k
     \partial_{l_0}^{k - m} f}{(x_{l_0} - x_{l_1}) \ldots (x_{l_0} - x_{l_m})}
     . \]
  But
  \[ \frac{\partial_{l_0}^{k - m} f}{f} = \sum_{\underset{i_s \neq i_r}{i_1,
     \ldots, i_{k - m} \in [N ; M]}} \frac{\beta_{i_1} \ldots \beta_{i_{k -
     m}}}{(1 - \beta_{i_1} y_{l_0}) \ldots (1 - \beta_{i_{k - m}} y_{l_0})},
  \]
  and by the residue theorem we have
  \[ \frac{(1 - y_{l_0})^k \partial_{l_0}^{k - m} f}{f (y_{l_0} - y_{l_1})
     \ldots (y_{l_0} - y_{l_m})} + \cdots + \frac{(1 - y_{l_m})^k
     \partial_{l_m}^{k - m} f}{f (y_{l_m} - y_{l_0}) \ldots (y_{l_m} - y_{l_{m
     - 1}})} \]
  \[ = \frac{1}{2 \pi i} \sum_{\underset{i_s \neq i_r}{i_1, \ldots, i_{k - m}
     \in [N ; M]}} \oint \frac{(1 - z)^k}{(z - y_{l_0}) \ldots (z - y_{l_m})} 
     \frac{\beta_{i_1} \ldots \beta_{i_{k - m}}}{(1 - \beta_{i_1} z) \ldots (1
     - \beta_{i_{k - m}} z)} d \text{} z. \]
  This proves the claim.
\end{proof}

Applying finitely many times differential operators $\mathcal{D}_k$ in
(\ref{CI}), we get expectations of
\[ \prod_{j = 1}^{\nu} \left( \sum_{i = 1}^N \left( \frac{\lambda_i + N -
   i}{N} \right)^{k_j} \right), \]
with respect to the Schur measure $\rho_{\beta, y}$. Such expectations can
also be expressed as appropriate contour integrals for $\nu \geq 2$.

\begin{lemma}
  \label{Lemma3}Consider the operators $D_k$ that act on the variables $x_1,
  \ldots, x_N$, and $y_i = 1 - x_i$. Then, we have that
  \[ \frac{D_{k_2} D_{k_1} f}{f} =\mathcal{F}_{k_1} \cdot \mathcal{F}_{k_2}
     +\mathcal{G}_{k_1, k_2} +\mathcal{H}_{k_1, k_2}, \]
  where
  \[ \mathcal{G}_{k_1, k_2} \assign \frac{- 1}{4 \pi^2} \sum_{m = 0}^{k_1}
     \binom{k_1}{m} (- 1)^m m! \sum_{\mu = 0}^{k_2 - 1} \binom{k_2}{\mu} (k_2
     - \mu) (- 1)^{\mu - 1} \mu ! \sum_{\underset{i_s \neq i_r \text{ for } s
     \neq r}{i_1, \ldots, i_{k_1 - m} \in [N ; M]}} \sum_{\{l_1, \ldots, l_m
     \} \subseteq [N]} \]
  \[ \sum_{\underset{\iota_s \neq \iota_r \text{ for } s \neq r}{\iota_1,
     \ldots, \iota_{k_2 - \mu - 1} \in [N ; M]}} \sum_{\{\lambda_0, \ldots,
     \lambda_{\mu} \} \subseteq [N]} \oint \oint \frac{(1 - z)^{k_1} (1 -
     w)^{k_2}}{(z - w)^2}  \frac{\beta_{i_1} \ldots \beta_{i_{k_1 - m}}}{(1 -
     \beta_{i_1} z) \ldots (1 - \beta_{i_{k_1 - m}} z)}  \]
  \[ \times \frac{1}{(z - y_{l_1}) \ldots (z - y_{l_m})} 
     \frac{\beta_{\iota_1} \ldots \beta_{\iota_{k_2 - \mu - 1}}}{(1 -
     \beta_{\iota_1} w) \ldots (1 - \beta_{\iota_{k_2 - \mu - 1}} w)} 
     \frac{1}{(w - y_{\lambda_0}) \ldots (w - y_{\lambda_{\mu}})} d \text{} z
     \text{} d \text{} w, \]
  where the contours $C_z, C_w$ are positively oriented, they encircle all the
  $y_i$ but they do not encircle any of the $\beta_j^{- 1}$, and $C_z$
  encircles $C_w$. Moreover, $\mathcal{H}_{k_1, k_2}$ is a linear combination
  of double contour integrals of the form
  \[ \sum_{J_1 \subseteq [N ; M] \of |J_1 | = \alpha_1} \sum_{I_1 \subseteq
     [N] \of |I_1 | = \gamma_1} \sum_{J_2 \subseteq [N ; M] \of |J_2 | =
     \alpha_2} \sum_{I_2 \subseteq [N] \of |I_2 | = \gamma_2} \oint \oint
     \frac{(1 - z)^{k_1} (1 - w)^{k_2}}{(z - w)^{\alpha}} \]
  \begin{equation}
    \prod_{j \in J_1} \frac{\beta_j}{1 - \beta_j z} \cdot \prod_{i \in I_1}
    \frac{1}{z - y_i} \cdot \prod_{j \in J_2} \frac{\beta_j}{1 - \beta_j w}
    \cdot \prod_{i \in I_2} \frac{1}{w - y_i} d \text{} z \text{} d \text{} w,
    \label{ast}
  \end{equation}
  where the contours $C_z$, $C_w$ are as before, $\alpha \geq 3$, and
  $\alpha_1 + \gamma_1 + \alpha_2 + \gamma_2 < k_1 + k_2$.
\end{lemma}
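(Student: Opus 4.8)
The plan is to iterate the computation of Lemma~\ref{Lem2} one more time, i.e. to apply the operator $D_{k_2}$ (acting on the $x$-variables) to the identity $\mathcal{F}_{k_1} = D_{k_1}f/f$ after multiplying back by $f$. Concretely, write $D_{k_2}D_{k_1}f = D_{k_2}(f \cdot \mathcal{F}_{k_1})$, and expand $D_{k_2}$ via its defining formula
\[
D_{k_2}(g)(\vec u) = \frac{1}{V_N(\vec u)}\sum_{l_0}u_{l_0}^{k_2}\partial_{l_0}^{k_2}\bigl[V_N(\vec u)\,g(\vec u)\bigr],
\]
applied to $g = f\mathcal{F}_{k_1}$. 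The first step is to record the Leibniz expansion: $\partial_{l_0}^{k_2}[V_N f\mathcal{F}_{k_1}]$ produces, after dividing by $V_N$, exactly the combinatorial sum over $m$ already seen in the proof of Lemma~\ref{Lem2} (the $\binom{k_2}{m}$-term coming from how many derivatives hit $V_N$, converted via the residue computation into the $\mu$-sum with the poles at the $y_{l_j}$), but now with the Leibniz rule also distributing the remaining derivatives between $f$ and $\mathcal{F}_{k_1}$. The term where all $k_2-\mu$ derivatives (after stripping the $V_N$-derivatives) land on $f$ reconstructs a factor $\mathcal{F}_{k_2}$ and, together with the already present $\mathcal{F}_{k_1}$, gives the leading term $\mathcal{F}_{k_1}\mathcal{F}_{k_2}$. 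The term where exactly one derivative lands on $\mathcal{F}_{k_1}$ is the source of $\mathcal{G}_{k_1,k_2}$; and all terms where two or more derivatives land on $\mathcal{F}_{k_1}$ (hence raising the order of the $(z-w)$-pole to at least $3$ and strictly lowering the total number of $\beta$- and $y$-factors) get collected into $\mathcal{H}_{k_1,k_2}$.

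In more detail: $\mathcal{F}_{k_1}$ as written in \eqref{vfsgmw} is, after performing the $z$-integral by residues, a symmetric rational function of $y_1,\dots,y_N$ whose only singularities in any single variable $y_{l_0}$ are simple poles at $y_{l_0}=y_{l_j}$ (the poles at $\beta^{-1}$ are outside the contour). Differentiating $\mathcal{F}_{k_1}$ once in $y_{l_0}$ either differentiates one of the $\beta$-factors $1/(1-\beta_{i}y_{l_0})$ — producing an extra factor $\beta_i/(1-\beta_i y_{l_0})$, i.e. raising $\alpha_1$ by one while leaving room because we also lost the derivative budget — or differentiates one of the $1/(z-y_{l_0})$-type simple poles, raising it to a double pole; in the contour representation one keeps everything under a $w$-integral, and the double pole in $w$ becomes the $(z-w)^2$ in the definition of $\mathcal{G}$. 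The precise bookkeeping of the binomial/Stirling-type constants $\binom{k_1}{m}(-1)^m m!$ and $\binom{k_2}{\mu}(k_2-\mu)(-1)^{\mu-1}\mu!$ is then just matching coefficients from the Leibniz rule with the residue expansion already carried out once in Lemma~\ref{Lem2}; I would present this as a direct (if tedious) computation rather than reprove the residue identity. For the contour nesting ($C_z$ encircles $C_w$): this comes from the order in which the operators are applied — the outer operator $D_{k_2}$ introduces the $w$-variable "closest" to $f$, while the $z$-contour from $\mathcal{F}_{k_1}$ was already chosen, and one arranges $C_z$ outside $C_w$ so that the only residues picked up in $w$ are at the $y_i$ and the point $w=z$ is not enclosed by $C_w$ (so the $(z-w)^{-2}$ factor contributes no spurious residue on the $w$-contour, and the interaction between the two operators is captured entirely by allowing $C_z$ to enclose $C_w$, with the cross term evaluated accordingly).

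The main obstacle I expect is the combinatorial accounting of the ``mixed'' terms — keeping track, in the Leibniz expansion, of exactly which derivatives hit $V_N$, which hit $f$, and which hit $\mathcal{F}_{k_1}$, and then correctly merging the two layers of residue calculus (the $z$-residues already inside $\mathcal{F}_{k_1}$, and the new $w$-residues from $D_{k_2}$) without double-counting or sign errors. In particular one must check that when a derivative falls on a pole $1/(z-y_{l_0})$ inside $\mathcal{F}_{k_1}$ and simultaneously the $V_N$-derivatives of $D_{k_2}$ are taken, the resulting rational function still has the claimed pole structure $(z-w)^{-2}$ and not something worse, and that all genuinely higher-order contributions land in $\mathcal{H}_{k_1,k_2}$ with the stated degree bound $\alpha_1+\gamma_1+\alpha_2+\gamma_2<k_1+k_2$; this bound is what guarantees, in the later asymptotic analysis, that $\mathcal{H}_{k_1,k_2}$ is negligible. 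I would isolate this as a lemma-internal claim: each application of a derivative to $\mathcal{F}_{k_1}$ either raises the order of a $(z-w)$-type pole by one or raises some $\alpha$ by one, and in all cases consumes one unit of the degree budget, so two or more such hits force $\alpha\ge 3$ together with a strict drop in the total count of $\beta$/$y$ factors.
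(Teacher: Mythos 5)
Your proposal follows essentially the same route as the paper: writing $D_{k_2}D_{k_1}f = D_{k_2}(f\cdot\mathcal{F}_{k_1})$, expanding by the Leibniz rule, recovering $\mathcal{F}_{k_1}\mathcal{F}_{k_2}$ when no derivative hits $\mathcal{F}_{k_1}$, obtaining $\mathcal{G}_{k_1,k_2}$ from a single derivative falling on the $y$-pole factor followed by a second residue-theorem step (which is also what forces $C_z$ to encircle $C_w$), and collecting everything else into $\mathcal{H}_{k_1,k_2}$. The only caveat is bookkeeping you already flag yourself: the paper keeps the $z$-contour of $\mathcal{F}_{k_1}$ intact so that the new derivatives act only on the symmetric factor $g_{m+1}(z;\vec{x})$ (not on the $\beta$-factors, which depend on $z$ alone), and even some one-derivative terms (those with $\sigma<m$ in the subset expansion) land in $\mathcal{H}_{k_1,k_2}$, not only the two-or-more-derivative ones.
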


\begin{proof}
  We write
  \begin{equation}
    \frac{1}{f} D_{k_2} D_{k_1} f = \frac{1}{f} D_{k_2} \left[ f \cdot
    \frac{D_{k_1} f}{f} \right] . \label{operat}
  \end{equation}
  We have shown that $f^{- 1} D_{k_1} f$ is a sum of contour integrals and the
  factors of the integrands that depend on $x_1, \ldots, x_N$ are
  \[ g_{m + 1} (z ; x_1, \ldots, x_N) = \sum_{\{l_0, \ldots, l_m \} \subseteq
     [N]} \frac{1}{(z - 1 + x_{l_0}) \ldots (z - 1 + x_{l_m})} . \]
  Note that $g_m (z ; x_1, \ldots, x_N)$ is symmetric in $x_1, \ldots, x_N$.
  On the right hand side of (\ref{operat}), in the case that $D_{k_2}$ acts
  only on $f$ we get $\mathcal{F}_{k_1} \cdot \mathcal{F}_{k_2}$. We also have
  to compute the summand of (\ref{operat}) that emerges when $D_{k_2}$ acts on
  $g_m (z ; x_1, \ldots, x_N)$ as well. We have
  \[ \frac{D_{k_2} (fg_{m + 1})}{f} = \text{} \]
  \begin{equation}
    \sum_{\mu = 0}^{k_2} \sum_{a = 0}^{k_2 - \mu} \binom{k_2}{\mu} \binom{k_2
    - \mu}{a} \sum_{\underset{\lambda_s \neq \lambda_r \text{ for } s \neq
    r}{\lambda_0, \ldots, \lambda_{\mu} \in [N]}} \sum_{\underset{\iota_s \neq
    \iota_r \text{ for } s \neq r}{\iota_1, \ldots, \iota_{k_2 - \mu - a} \in
    [N ; M]}} \frac{x_{\lambda_0}^{k_2} \partial_{\lambda_0}^a g_{m + 1} (z ;
    \vec{x})}{(x_{\lambda_0} - x_{\lambda_1}) \ldots (x_{\lambda_0} -
    x_{\lambda_{\mu}})} \label{4}
  \end{equation}
  \begin{equation}
    \times \frac{\beta_{\iota_1} \ldots \beta_{\iota_{k_2 - \mu - a}}}{(1 -
    \beta_{\iota_1} + \beta_{\iota_1} x_{\lambda_0}) \ldots (1 -
    \beta_{\iota_{k_2 - \mu - a}} + \beta_{\iota_{k_2 - \mu - a}}
    x_{\lambda_0})} . \label{formu}
  \end{equation}
  Moreover,
  \[ \partial_{\lambda_0}^a g_{m + 1} (z ; \vec{x}) = \partial_{\lambda_0}^a
     \left( \frac{1}{z - 1 + x_{\lambda_0}} \right) \sum_{\{l_1, \ldots, l_m
     \} \subseteq [N] \backslash \{\lambda_0 \}} \frac{1}{(z - 1 + x_{l_1})
     \ldots (z - 1 + x_{l_m})} \]
  \[ = (- 1)^a a! \left( \frac{1}{z - y_{\lambda_0}} \right)^{a + 1}
     \sum_{\sigma = 0}^m (- 1)^{m - \sigma} \left( \frac{1}{z - y_{\lambda_0}}
     \right)^{m - \sigma} g_{\sigma} (z ; 1 - y_1, \ldots, 1 - y_N), \]
  where $y_i = 1 - x_i$, and $g_0 \equiv 1$. Since $g_{\sigma} (z ; 1 - y_1,
  \ldots, 1 - y_N)$ is symmetric in $y_1, \ldots, y_N$, the sum with respect
  to $\lambda_0, \ldots, \lambda_{\mu}$ in (\ref{4}) will give rise to
  symmetric terms of the form
  \[ \frac{(1 - y_{\lambda_0})^{k_2}  (z - y_{\lambda_0})^{\sigma - m - a - 1}
     \cdot \partial_{\lambda_0}^{k_2 - \mu - a} f}{f (y_{\lambda_0} -
     y_{\lambda_1}) \ldots (y_{\lambda_0} - y_{\lambda_{\mu}})} + \cdots +
     \frac{(1 - y_{\lambda_{\mu}})^{k_2}  (z - y_{\lambda_{\mu}})^{\sigma - m
     - a - 1} \cdot \partial_{\lambda_{\mu}}^{k_2 - \mu - a} f}{f
     (y_{\lambda_{\mu}} - y_{\lambda_0}) \ldots (y_{\lambda_{\mu}} -
     y_{\lambda_{\mu - 1}})} . \]
  By the residue theorem, writing these symmetric terms as contour integrals
  we deduce that the summand of (\ref{operat}) that emerges when $D_{k_2}$
  acts on $f^{- 1} D_{k_1} f$ will be the sum of $\mathcal{G}_{k_1, k_2}$
  (which corresponds to $a = 1$ and $\sigma = m$) and a linear combination of
  double contour integrals (\ref{ast}).
\end{proof}

In the following, by abuse of notation we will denote by $\mathcal{I}
(\{\alpha_i, \gamma_i \}_{i = 1}^{\mu})$ contour integrals of the form
\[ C \sum_{J_1 \subseteq [N ; M] : |J_1 | = \alpha_1} \sum_{I_1 \subseteq [N]
   \of |I_1 | = \gamma_1} \ldots \sum_{J_{\mu} \subseteq [N ; M] : |J_{\mu} |
   = \alpha_{\mu}} \sum_{I_{\mu} \subseteq [N] \of |I_{\mu} | = \gamma_{\mu}}
\]
\[ \oint \ldots \oint (1 - z_1)^{\kappa_1} \ldots (1 - z_{\mu})^{\kappa_{\mu}}
   \prod_{i < j} \left( \frac{1}{z_i - z_j} \right)^{k_{i, j}} \prod_{j \in
   J_1} \frac{\beta_j}{1 - \beta_j z_1} \cdot \prod_{i \in I_1} \frac{1}{z_1 -
   y_i} \]
\[ \times \cdots \times \prod_{j \in J_{\mu}} \frac{\beta_j}{1 - \beta_j
   z_{\mu}} \cdot \prod_{i \in I_{\mu}} \frac{1}{z_{\mu} - y_i} d \text{} z_1
   \ldots d \text{} z_{\mu}, \]
where $C$ can be any constant and $\kappa_i$, $k_{i, j}$ can be any
non-negative integers. Furthermore, the contours $C_{z_i}$ are positively
oriented, they encircle all the $y_i$ but they do not encircle any of the
$\beta_j^{- 1}$, and $C_{z_i}$ encircles $C_{z_j}$ for $i < j$. Note that for
$\mu = 1, 2$, such integrals describe $f^{- 1} D_{k_1} f$, $f^{- 1} D_{k_2}
D_{k_1} f$. Before we state the analogous of Lemma \ref{Lemma3} and Lemma
\ref{Lem2} for $f^{- 1} D_{k_{\nu}} \ldots D_{k_1} f$, we prove the following.

\begin{lemma}
  \label{nbcf}Consider the operators $D_k$ that act on variables $x_1, \ldots,
  x_N$, and $y_i = 1 - x_i$. Then, we have that
  \begin{enumerate}
    {\item \label{hghmn}The term $f^{- 1} D_{k_{l + 1}}
    [f\mathcal{F}_{k_1} \ldots \mathcal{F}_{k_l}] -\mathcal{F}_{k_1} \ldots
    \mathcal{F}_{k_{l + 1}}$ is a linear combination of contour integrals
    \[ \mathcal{F}_{k_{i_1}} \ldots \mathcal{F}_{k_{i_{\nu}}} \mathcal{I}
       (\{\alpha_i, \gamma_i \}_{i = 1}^{l + 1 - \nu}), \]
    where $\nu \leq l - 1$, $\{i_1, \ldots, i_{\nu} \} \subseteq [l]$, and
    \[ \sum_{j = 1}^{l + 1 - \nu} (\alpha_j + \gamma_j) \leq \sum_{j = 1}^{l +
       1} k_j - \sum_{j = 1}^{\nu} k_{i_j} - l + 1 + \nu . \]}{\item
    \label{vcx}The term
    \[ f^{- 1} D_{k_{l + 1}} [f\mathcal{F}_{k_1} \ldots \mathcal{F}_{k_l}
       \mathcal{I}(\{\alpha_i, \gamma_i \}_{i = 1}^{\mu})] -\mathcal{F}_{k_1}
       \ldots \mathcal{F}_{k_{l + 1}} \mathcal{I} (\{\alpha_i, \gamma_i \}_{i
       = 1}^{\mu}) \]
    is a linear combination of contour integrals $\mathcal{F}_{k_{i_1}} \ldots
    \mathcal{F}_{k_{i_{\nu}}} \mathcal{I} (\{\alpha_i', \gamma_i' \}_{i =
    1}^{l + 1 + \mu - \nu})$, where $\nu \leq l - 1$, $\{i_1, \ldots, i_{\nu}
    \} \subseteq [l]$,
    \[ \sum_{i = 1}^{l + 1 + \mu - \nu} (\alpha_i' + \gamma_i') \leq \sum_{i =
       1}^{\mu} (\alpha_i + \gamma_i) + \sum_{j = 1}^{l + 1} k_j - \sum_{j =
       1}^{\nu} k_{i_j} - l + 1 + \nu, \]
    and of contour integrals $\mathcal{F}_{k_1} \ldots \mathcal{F}_{k_l}
    \mathcal{I} (\{\alpha_i', \gamma_i' \}_{i = 1}^{\mu + 1})$, where
    \[ \sum_{i = 1}^{\mu + 1} (\alpha_i' + \gamma_i') \leq k_{l + 1} - 1 +
       \sum_{i = 1}^{\mu} (\alpha_i + \gamma_i) . \]}
  \end{enumerate}
\end{lemma}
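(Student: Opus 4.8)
The plan is to mimic the computation that produced Lemma \ref{Lemma3}, but now with the operator $D_{k_{l+1}}$ acting on a product of factors of three different kinds: the scalar multipliers $\mathcal{F}_{k_1},\dots,\mathcal{F}_{k_l}$ (which are already contour integrals whose integrands do not depend on $x_1,\dots,x_N$), the partition function $f$, and — in part \ref{vcx} — the integrand of $\mathcal{I}(\{\alpha_i,\gamma_i\}_{i=1}^\mu)$, whose $\vec x$-dependence sits in the factors $\prod_{i\in I_s}(z_s-y_i)^{-1}$. The key structural fact I would invoke is the Leibniz-type expansion of $D_{k_{l+1}}$ already used in the proof of Lemma \ref{Lemma3}: for a product $fg$ with $g$ symmetric,
\[
\frac{D_{k}(fg)}{f}=\sum_{\mu=0}^{k}\sum_{a=0}^{k-\mu}\binom{k}{\mu}\binom{k-\mu}{a}\sum_{\substack{\lambda_0,\dots,\lambda_\mu\in[N]\\ \text{distinct}}}\sum_{\substack{\iota_1,\dots,\iota_{k-\mu-a}\in[N;M]\\ \text{distinct}}}\frac{x_{\lambda_0}^{k}\,\partial_{\lambda_0}^{a}g}{(x_{\lambda_0}-x_{\lambda_1})\cdots(x_{\lambda_0}-x_{\lambda_\mu})}\cdot\frac{\partial_{\lambda_0}^{k-\mu-a}f}{f},
\]
and then symmetrize over $\lambda_0,\dots,\lambda_\mu$ to convert each resulting symmetric sum into a single contour integral via the residue theorem, exactly as in Lemma \ref{Lemma3}. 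When $g$ is built from a product of $\mathcal{F}$'s and an $\mathcal{I}$-integrand, I would first observe that the $\mathcal{F}$-factors are scalars in $\vec x$, so $D_{k_{l+1}}$ either leaves them untouched (their product is pulled out as $\mathcal{F}_{k_1}\cdots\mathcal{F}_{k_l}$ times the action on the rest) or "hits" one of them, which replaces $\mathcal{F}_{k_{i_0}}$ by a fresh contour integral and accounts for the drop in the index count; this is where the allowed index sets $\{i_1,\dots,i_\nu\}\subseteq[l]$ and the constraint $\nu\le l-1$ come from (one factor must be consumed to produce a new integral, and the degree bookkeeping forbids consuming all of them while gaining nothing).

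For part \ref{hghmn} I would take $g=1$ in the above (so $g$ contributes only through the $\mathcal{F}$-factors that $D_{k_{l+1}}$ may hit), subtract off the "diagonal" term $\mathcal{F}_{k_1}\cdots\mathcal{F}_{k_{l+1}}$ which is the summand where $D_{k_{l+1}}$ acts purely on $f$, and identify every remaining summand. Each such summand arises when $D_{k_{l+1}}$ acts on at least one $\mathcal{F}_{k_{i_0}}$; writing out $\partial^a$ of the integrand of that $\mathcal{F}_{k_{i_0}}$ (its $\vec x$-dependence being of the form $\prod(z-y_i)^{-1}$, identical in shape to the $g_{m+1}$ in Lemma \ref{Lemma3}), the same residue-theorem bookkeeping produces a new $\mathcal{I}$-integral absorbing $z_{i_0}$ and $w$, at the cost of an index drop of exactly one from $\{i_0\}$ plus the usual $-1$ per residue merge. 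Carefully tracking the total degree of the polynomial numerators $(1-z_s)^{\kappa_s}$ minus the number of $y$-poles and $\beta$-poles created gives the inequality $\sum_{j=1}^{l+1-\nu}(\alpha_j+\gamma_j)\le \sum_{j=1}^{l+1}k_j-\sum_{j=1}^{\nu}k_{i_j}-l+1+\nu$: each of the $l+1$ operators contributes at most $k_j$ to the pole-plus-index budget, each of the $\nu$ surviving $\mathcal{F}$-factors withdraws $k_{i_j}$, and the $l+1-\nu$ residue merges each cost one, i.e. $-(l+1-\nu)=-l-1+\nu$, except we retain one unit (hence $-l+1+\nu$) because the very first integral is already present and does not require a merge.

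For part \ref{vcx} the only new feature is that $g$ now additionally carries the $\mathcal{I}(\{\alpha_i,\gamma_i\}_{i=1}^\mu)$-integrand, whose $\vec x$-dependence is $\prod_{s}\prod_{i\in I_s}(z_s-y_i)^{-1}$. When $D_{k_{l+1}}$ acts on this factor rather than on $f$ or on an $\mathcal{F}$, it either merges its new variable with an existing $z_s$ (increasing $\mu$ by $1$ and producing the second family $\mathcal{F}_{k_1}\cdots\mathcal{F}_{k_l}\,\mathcal{I}(\{\alpha_i',\gamma_i'\}_{i=1}^{\mu+1})$ with budget bounded by $k_{l+1}-1+\sum_{i=1}^\mu(\alpha_i+\gamma_i)$, since here the full $k_{l+1}$ enters, one unit is lost to the merge, and the old $\mathcal{I}$-budget is inherited), or it behaves as in part \ref{hghmn} and yields the first family with $l+1+\mu-\nu$ contours and the stated additive budget. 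The main obstacle I anticipate is purely combinatorial stamina: one must verify that, across all the ways $D_{k_{l+1}}$ can distribute its $k_{l+1}$ "applications" of $u\partial$ among $f$, the $\mathcal{F}$'s, and the $\mathcal{I}$-integrand, the residue-theorem symmetrization always closes up into integrals of exactly the declared shape $\mathcal{I}(\{\alpha_i,\gamma_i\})$ with contours nested in the prescribed order, and that no summand violates the degree inequalities — in particular that cross-terms where $D_{k_{l+1}}$ simultaneously lowers the index of an $\mathcal{F}$ and touches the $\mathcal{I}$-integrand still respect the bound. This is entirely analogous to the single case worked out in Lemma \ref{Lemma3}, so I would present it by carefully setting up the general Leibniz expansion, doing the residue step once in the general notation, and then reading off the three types of output terms together with their degree counts.
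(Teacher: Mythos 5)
Your plan follows the paper's route: expand $D_{k_{l+1}}$ by the Leibniz rule over $f$, the integrands of the $\mathcal{F}$'s and (for part \ref{vcx}) the $\mathcal{I}$-integrand, symmetrize over the hit indices, convert that symmetric sum into a new contour variable via the residue theorem, and then count $\beta$-factors and $y$-poles. However, two points as written are genuinely wrong. First, the premise that the $\mathcal{F}_{k_j}$ are ``scalars in $\vec{x}$'' whose integrands ``do not depend on $x_1,\ldots,x_N$'' is false: by Lemma \ref{Lem2} the integrand of $\mathcal{F}_k$ contains $\sum_{\{l_0,\ldots,l_m\}\subseteq[N]}\prod_s (z-y_{l_s})^{-1}$, i.e. exactly a factor $g_{m+1}(z;\vec{x})$. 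If the $\mathcal{F}$'s really were $x$-independent, then $D_{k_{l+1}}[f\mathcal{F}_{k_1}\cdots\mathcal{F}_{k_l}]=\mathcal{F}_{k_1}\cdots\mathcal{F}_{k_l}\,D_{k_{l+1}}f$ and the difference in part \ref{hghmn} would be identically zero; the lemma has content precisely because $D_{k_{l+1}}$ can differentiate these $g$-factors. You contradict this premise later (you do differentiate ``the integrand of that $\mathcal{F}_{k_{i_0}}$''), so the working plan survives, but the assertion must be removed.

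Second, and more seriously, the bookkeeping offered for the central inequality does not add up. Your accounting (``the $l+1-\nu$ residue merges each cost one \ldots except we retain one unit'') gives $-(l+1-\nu)+1=-l+\nu$, not the $-l+1+\nu$ you then write down, and the sharper bound $\sum_j(\alpha_j+\gamma_j)\le\sum_{j=1}^{l+1}k_j-\sum_{j=1}^{\nu}k_{i_j}-l+\nu$ that your count would imply is false: already for $l=1$, $\nu=0$ the term $\mathcal{G}_{k_1,k_2}$ of Lemma \ref{Lemma3} attains $\alpha_1+\gamma_1+\alpha_2+\gamma_2=(k_1-m)+m+(k_2-\mu-1)+(\mu+1)=k_1+k_2$, i.e. equality in the stated bound. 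The correct count, as in (\ref{fhd})--(\ref{vbvn}), is: each of the $l-\nu$ consumed $\mathcal{F}_{k_\iota}$ must absorb at least one of the $k_{l+1}$ derivatives, and differentiating its $g$-factor turns one $y$-pole into a cross-factor in $(z_\iota-w)$, so its contribution drops from $k_\iota+1$ to at most $k_\iota$; the remaining $\varepsilon_0\le k_{l+1}-n-(l-\nu)$ derivatives fall on $f$ and give that many $\beta$-factors in the new variable, which in addition carries $n+1$ $y$-poles from the symmetrization over $\lambda_0,\ldots,\lambda_n$. Summing gives exactly $\sum_{j=1}^{l+1}k_j-\sum_{j=1}^{\nu}k_{i_j}-l+1+\nu$, and the analogous count (with one extra unit lost whenever the $\mathcal{I}$-integrand is hit) yields the two bounds in part \ref{vcx}. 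With these corrections your argument coincides with the paper's proof; as written, the key inequality is not established.
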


\begin{proof}
  We start with the proof of \ref{hghmn}. To show the claim, we consider the
  case that $D_{k_{l + 1}}$ acts on both the product of symmetric functions
  \[ g_m (z ; \vec{x}) = \sum_{I \subseteq [N] \of |I| = m} \prod_{i \in I}
     \frac{1}{z - 1 + x_i} \text{\quad and\quad} f (\vec{x}) = \prod_{i = 1}^N
     \prod_{j = N + 1}^M (1 - \beta_j + \beta_j x_i) . \]
  By the Leibniz rule, we have
  \[ f^{- 1} D_{k_{l + 1}} \left[ f \prod_{j = 1}^l g_{m_j + 1} (z_j ;
     \vec{x}) \right] = \sum_{n = 0}^{k_{l + 1}} \binom{k_{l + 1}}{n}
     \sum_{\varepsilon_0 + \cdots + \varepsilon_l = k_{l + 1} - n} \binom{k_{l
     + 1} - n}{\varepsilon_0, \ldots, \varepsilon_l} \sum_{\underset{\lambda_s
     \neq \lambda_r \text{ for } s \neq r}{\lambda_0, \ldots, \lambda_n \in
     [N]}} \]
  \begin{equation}
    \frac{x_{\lambda_0}^{k_{l + 1}} \partial_{\lambda_0}^{\varepsilon_0} f
    \partial_{\lambda_0}^{\varepsilon_1} g_{m_1 + 1} (z_1 ; \vec{x}) \cdot
    \cdots \cdot \partial_{\lambda_0}^{\varepsilon_l} g_{m_l + 1} (z_l ;
    \vec{x})}{f (x_{\lambda_0} - x_{\lambda_1}) \ldots (x_{\lambda_0} -
    x_{\lambda_n})}, \label{fhd}
  \end{equation}
  where $m_j \leq k_j$. To compute the desired term, we must consider the case
  where $\varepsilon_0 \neq k_{l + 1} - n$. We also assume that only for
  $\{i_1, \ldots, i_{\nu} \} \subseteq [l]$, we have $\varepsilon_{i_1} =
  \cdots = \varepsilon_{i_{\nu}} = 0$, and let $\{\iota_1, \ldots, \iota_{l -
  \nu} \} = [l] \backslash \{i_1, \ldots, i_{\nu} \}$. Writing
  \begin{equation}
    \partial_{\lambda_0}^{\varepsilon} g_{m + 1} (z ; \vec{x}) = (-
    1)^{\varepsilon} \varepsilon ! \left( \frac{1}{z - y_{\lambda_0}}
    \right)^{\varepsilon + 1}  \sum_{\sigma = 0}^m (- 1)^{m - \sigma}  \left(
    \frac{1}{z - y_{\lambda_0}} \right)^{m - \sigma} g_{\sigma} (z ; 1 - y_1,
    \ldots, 1 - y_N), \label{vbvn}
  \end{equation}
  and using the residue theorem for the sum with respect to $\lambda_0,
  \ldots, \lambda_n$, we obtain linear combinations of
  \[ \mathcal{F}_{k_{i_1}} \ldots \mathcal{F}_{k_{i_{\nu}}} \mathcal{I}
     (\{\alpha_i, \gamma_i \}_{i = 1}^{l + 1 - \nu}) . \]
  Due to (\ref{vbvn}) and the presence of $f^{- 1}
  \partial_{\lambda_0}^{\varepsilon_0} f$, to maximize $\sum_{i = 1}^{l + 1 -
  \nu} (\alpha_i + \gamma_i)$, in (\ref{fhd}) we have to differentiate
  $g_{m_{\iota_1} + 1}, \ldots, g_{m_{\iota_{l - \nu}} + 1}$ as less as
  possible. This corresponds to the case where $\varepsilon_{\iota_1} = \cdots
  = \varepsilon_{\iota_{l - \nu}} = 1$ and $\varepsilon_0 = k_{l + 1} - n - l
  + \nu$. Therefore, we deduce that
  \[ \sum_{i = 1}^{l + 1 - \nu} (\alpha_i + \gamma_i) \leq k_{l + 1} - n - l +
     \nu + n + 1 + \sum_{j = 1}^{\nu - l} k_{\iota_j} = \sum_{j = 1}^{l + 1}
     k_j - \sum_{j = 1}^{\nu} k_{i_j} - l + 1 + \nu . \]
  The proof of \ref{vcx} is similar and it is omitted.
\end{proof}

Our goal is to prove formulas for $f^{- 1} D_{k_{\nu}} \ldots D_{k_1} f$ in
which we distinguish the summands that are products of $\mathcal{F}_k$ and
$\mathcal{G}_{k, l}$, $k, l \in \mathbb{N}$. This is important because the
terms $\mathcal{F}_k$ will give the limit in the LLN, while both terms
$\mathcal{F}_k$ and $\mathcal{G}_{k, l}$ will give the covariance of the
limiting (Gaussian) random vector (see Sections \ref{LLL}, \ref{S5},
\ref{S6}).

\begin{lemma}
  \label{Lemapent}Consider the operators $D_k$ that act on variables $x_1,
  \ldots, x_N$, and $y_i = 1 - x_i$. Then, for $\nu \geq 2$ we have that
  \begin{equation}
    \frac{D_{k_{\nu}} \ldots D_{k_1} f}{f} =\mathcal{F}_{k_1} \ldots
    \mathcal{F}_{k_{\nu}} + \sum_{\underset{\pi_1 \neq \emptyset}{\pi_1 \sqcup
    \pi_2 = [\nu]}} \left( \prod_{(i, j) \in \pi_1} \mathcal{G}_{k_i, k_j}
    \right) \left( \prod_{i \in \pi_2} \mathcal{F}_{k_i} \right)
    +\mathcal{H}_{k_1, \ldots, k_{\nu}} . \label{adfh}
  \end{equation}
  The above sum is taken with respect to all non-empty sets $\pi_1$ of pairs
  $(i, j)$, with $i < j$, of elements of $[\nu]$ and all subsets $\pi_2$ of
  $[\nu]$ (that could be the empty set), such that $\pi_1$, $\pi_2$ are
  disjoint and their union is equal to $[\nu]$. Moreover, $\mathcal{H}_{k_1,
  \ldots, k_{\nu}}$ is a linear combination of terms
  \[ \mathcal{F}_{k_{i_1}} \ldots \mathcal{F}_{k_{i_{\lambda}}} \cdot
     \mathcal{I} (\{\alpha_i, \gamma_i \}_{i = 1}^{\nu - \lambda}), \]
  where $0 \leq \lambda \leq \nu - 2$, $i_1, \ldots, i_{\lambda} \in [\nu]$
  and $\sum_{i = 1}^{\nu - \lambda} (\alpha_i + \gamma_i) < \sum_{i = 1}^{\nu}
  k_i - \sum_{j = 1}^{\lambda} k_{i_j}$.
\end{lemma}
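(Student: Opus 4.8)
The plan is to prove (\ref{adfh}) by induction on $\nu \geq 2$. The base case $\nu = 2$ is precisely Lemma \ref{Lemma3}: there the only partition $\pi_1 \sqcup \pi_2 = [2]$ with $\pi_1 \neq \emptyset$ is $\pi_1 = \{ (1,2) \}$, $\pi_2 = \emptyset$, contributing $\mathcal{G}_{k_1,k_2}$, while $\mathcal{H}_{k_1,k_2}$ is exactly the linear combination of integrals $\mathcal{I}(\{\alpha_i,\gamma_i\}_{i=1}^2)$ described there, of total degree $\alpha_1 + \gamma_1 + \alpha_2 + \gamma_2 < k_1 + k_2$. It is convenient to attach to each candidate summand $\mathcal{F}_{k_{i_1}} \cdots \mathcal{F}_{k_{i_\lambda}} \cdot \mathcal{I}(\{\alpha_i,\gamma_i\}_{i=1}^q)$ the \emph{weight} $\sum_{r=1}^\lambda k_{i_r} + \sum_{i=1}^q (\alpha_i + \gamma_i)$. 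Since $\mathcal{G}_{k_i,k_j}$ is an $\mathcal{I}$ of total degree $k_i + k_j$, both the leading term $\mathcal{F}_{k_1} \cdots \mathcal{F}_{k_\nu}$ and every term $\left( \prod_{(i,j) \in \pi_1} \mathcal{G}_{k_i,k_j} \right) \left( \prod_{i \in \pi_2} \mathcal{F}_{k_i} \right)$ have the maximal weight $\sum_{i=1}^\nu k_i$, and the assertion on $\mathcal{H}_{k_1,\ldots,k_\nu}$ says exactly that it gathers all contributions of strictly smaller weight.

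For the inductive step I would assume (\ref{adfh}) for $\nu$, write
\[ \frac{D_{k_{\nu+1}} D_{k_\nu} \cdots D_{k_1} f}{f} = \frac{1}{f} D_{k_{\nu+1}} \left[ f \cdot \frac{D_{k_\nu} \cdots D_{k_1} f}{f} \right], \]
substitute the $\nu$-version of (\ref{adfh}) inside the bracket, and apply the linear operator $g \mapsto f^{-1} D_{k_{\nu+1}}[f g]$ to each summand via Lemma \ref{nbcf}. Applied to $\mathcal{F}_{k_1} \cdots \mathcal{F}_{k_\nu}$, part \ref{hghmn} yields $\mathcal{F}_{k_1} \cdots \mathcal{F}_{k_{\nu+1}}$ plus a combination of terms $\mathcal{F}_{k_{i_1}} \cdots \mathcal{F}_{k_{i_\mu}} \mathcal{I}(\ldots)$ of weight at most $\sum_{i=1}^{\nu+1} k_i$; the maximal-weight part comes only from the case in which $D_{k_{\nu+1}}$ differentiates a single factor $g$ of one $\mathcal{F}_{k_{i_0}}$ (together with $f$), and, repeating the residue manipulation from the proof of Lemma \ref{Lemma3} with $D_{k_{i_0}}, D_{k_{\nu+1}}$ in the roles of $D_{k_1}, D_{k_2}$, this part equals $\sum_{i_0 \in [\nu]} \mathcal{G}_{k_{i_0},k_{\nu+1}} \prod_{i \in [\nu] \setminus \{ i_0 \}} \mathcal{F}_{k_i}$; everything else has strictly smaller weight. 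Applied to a term $\left( \prod_{(i,j) \in \pi_1} \mathcal{G}_{k_i,k_j} \right) \left( \prod_{i \in \pi_2} \mathcal{F}_{k_i} \right)$, which has the shape $\mathcal{F}_{k_{i_1}} \cdots \mathcal{F}_{k_{i_p}} \mathcal{I}(\ldots)$ with $p = |\pi_2|$, part \ref{vcx} yields $\mathcal{F}_{k_{\nu+1}} \left( \prod_{\pi_1} \mathcal{G} \right) \left( \prod_{\pi_2} \mathcal{F} \right)$, plus the maximal-weight corrections $\sum_{i_0 \in \pi_2} \mathcal{G}_{k_{i_0},k_{\nu+1}} \left( \prod_{\pi_1} \mathcal{G} \right) \left( \prod_{\pi_2 \setminus \{ i_0 \}} \mathcal{F} \right)$ (again from $D_{k_{\nu+1}}$ interacting with a single $\mathcal{F}_{k_{i_0}}$, $i_0 \in \pi_2$), plus strictly-smaller-weight terms; here one uses that interacting with the $\mathcal{I}$-block raises the degree by at most $k_{\nu+1} - 1$ and interacting with two or more factors strictly lowers the weight. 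Finally, applied to $\mathcal{H}_{k_1,\ldots,k_\nu}$, part \ref{vcx} only produces terms of weight strictly below $\sum_{i=1}^{\nu+1} k_i$, which are absorbed into $\mathcal{H}_{k_1,\ldots,k_{\nu+1}}$; its weight bound follows by combining the bounds of Lemma \ref{nbcf} with the inductive bound on $\mathcal{H}_{k_1,\ldots,k_\nu}$.

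It then remains to check that the maximal-weight output is exactly the right-hand side of (\ref{adfh}) for $\nu + 1$. The leading term $\mathcal{F}_{k_1} \cdots \mathcal{F}_{k_{\nu+1}}$ arises once, from the leading $\nu$-term. Each partition $\pi_1' \sqcup \pi_2' = [\nu+1]$ with $\pi_1' \neq \emptyset$ is accounted for exactly once: if $\nu + 1 \in \pi_2'$, it comes from multiplying by $\mathcal{F}_{k_{\nu+1}}$ the $\nu$-summand with $\pi_1 = \pi_1'$, $\pi_2 = \pi_2' \setminus \{ \nu+1 \}$; if $(i_0, \nu+1) \in \pi_1'$ with $\pi_1' \setminus \{ (i_0,\nu+1) \} = \emptyset$, it comes from the leading $\nu$-term through the interaction with $\mathcal{F}_{k_{i_0}}$; and if $(i_0,\nu+1) \in \pi_1'$ with $\pi_1' \setminus \{ (i_0,\nu+1) \} \neq \emptyset$, it comes from the $\nu$-summand with $\pi_1 = \pi_1' \setminus \{ (i_0,\nu+1) \}$, $\pi_2 = \pi_2' \cup \{ i_0 \}$, again through the interaction with $\mathcal{F}_{k_{i_0}}$. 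Since these cases are mutually exclusive and exhaustive, and all remaining output lies strictly below the maximal weight, the induction closes.

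The main obstacle is the \emph{top-weight extraction} in the inductive step: Lemma \ref{nbcf} only asserts that the correction is \emph{some} linear combination of $\mathcal{F}$-products times $\mathcal{I}$-integrals with controlled weight, whereas I need that its maximal-weight component is \emph{precisely} $\sum_{i_0} \mathcal{G}_{k_{i_0},k_{\nu+1}}(\cdots)$ with coefficient one. Establishing this requires re-running the residue and combinatorial bookkeeping inside the proofs of Lemmas \ref{Lemma3} and \ref{nbcf} in the single-interaction, top-degree regime — tracking the binomial coefficients, factorials and signs, and verifying that the choice $a = 1$, $\sigma = m$ that produces the kernel $(z-w)^{-2}$ characteristic of $\mathcal{G}$ is the unique one attaining the maximal weight — which is exactly the computation already carried out for $\nu = 2$ in Lemma \ref{Lemma3}. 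A minor auxiliary point is to note that products of the blocks $\mathcal{G}$ and $\mathcal{F}_k$ do fit the template $\mathcal{I}(\{\alpha_i,\gamma_i\})$ (vanishing cross-exponents $k_{i,j}$ being allowed there), so that Lemma \ref{nbcf} applies verbatim to the summands of (\ref{adfh}).
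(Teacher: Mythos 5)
Your proposal is correct and takes essentially the same route as the paper's proof: induction on $\nu$ via the decomposition $f^{-1}D_{k_{\nu+1}}\bigl[f\cdot f^{-1}D_{k_{\nu}}\cdots D_{k_1}f\bigr]$, with Lemma \ref{nbcf} supplying the degree bounds that push everything except the single-interaction terms into $\mathcal{H}_{k_1,\ldots,k_{\nu+1}}$, and the new pairs $(i,\nu+1)\in\pi_1$ produced exactly when $D_{k_{\nu+1}}$ acts on $f$ and differentiates one factor $\mathcal{F}_{k_i}$ once. The ``top-weight extraction'' you flag as the main obstacle is handled in the paper exactly as you suggest, by appealing to the $a=1$, $\sigma=m$ residue computation already carried out in the proof of Lemma \ref{Lemma3}, so it is not a genuine gap.
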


\begin{proof}
  We will show the claim by induction on $\nu$. For $\nu = 2$ this is the
  result of Lemma \ref{Lemma3}. We assume that it holds for $\nu > 2$. For
  positive integers $k_1, \ldots, k_{\nu + 1}$, we write
  \[ \frac{D_{k_{\nu + 1}} \ldots D_{k_1} f }{f} = \frac{1}{f} D_{k_{\nu + 1}}
     \left( f \cdot \frac{D_{k_{\nu}} \ldots D_{k_1} f}{f} \right) . \]
  \begin{equation}
    = \frac{1}{f} D_{k_{\nu + 1}} \left( f \cdot \mathcal{F}_{k_1} \ldots
    \mathcal{F}_{k_{\nu}} + f \sum_{\underset{\pi_1 \neq \emptyset}{\pi_1
    \sqcup \pi_2 = [\nu]}} \left( \prod_{(i, j) \in \pi_1} \mathcal{G}_{k_i,
    k_j} \right) \left( \prod_{i \in \pi_2} \mathcal{F}_{k_i} \right) + f
    \cdot \mathcal{H}_{k_1, \ldots, k_{\nu}} \right) . \label{sdsf}
  \end{equation}
  Note that in order to get the corresponding sum (with respect to $\pi_1,
  \pi_2$) of (\ref{adfh}), every time that we apply a differential operator
  $D_{k_j}$, this must act either only to $f$ (which will give us
  $\mathcal{F}_{k_j}$) or to $f$ and only one of the $\mathcal{F}_{k_i}$, $i <
  j$. The pairs $(i, j) \in \pi_1$ express the situation where the $j$-th time
  that we apply the differential operator this acts on $f$ and
  $\mathcal{F}_{k_i}$.
  
  In (\ref{sdsf}), we first consider the case where $D_{k_{\nu + 1}}$ acts
  only to $f$. This will give us the desired summand $\mathcal{F}_{k_1} \ldots
  \mathcal{F}_{k_{\nu + 1}}$ and sums of $\left( \prod_{(i, j) \in \pi_1}
  \mathcal{G}_{k_i, k_j} \right) \left( \prod_{i \in \pi_2} \mathcal{F}_{k_i}
  \right)$ with respect to all $\pi_1, \pi_2$ as before, but such that $\nu +
  1 \in \pi_2$. The summands that correspond to $\pi_1, \pi_2$, such that $(i,
  \nu + 1) \in \pi_1$ will emerge from the second summand of (\ref{sdsf}) when
  $D_{k_{\nu + 1}}$ acts to $f$ and to only one of the $\mathcal{F}_{k_i}$, $i
  \in \pi_2$. In order to get such terms, when we apply $D_{k_{\nu + 1}}$ we
  have to differentiate once $\mathcal{F}_{k_i}$ and $k_{\nu} - 1$ times the
  product of $f$ and of the Vandermonde determinant. Note that if we
  differentiate $\mathcal{F}_{k_i}$ more than once, then we get
  $\mathcal{H}_{k_1, \ldots, k_{\nu + 1}}$. It remains to show that all the
  other cases give terms $\mathcal{H}_{k_1, \ldots, k_{\nu + 1}}$. We treat
  each of the three summands inside the parenthesis of (\ref{sdsf})
  separately.
  
  We start with $f \cdot \mathcal{F}_{k_1} \ldots \mathcal{F}_{k_{\nu}}$. Due
  to Lemma \ref{nbcf}, for the summands $\mathcal{F}_{k_{i_1}} \ldots
  \mathcal{F}_{k_{i_{\lambda}}} \mathcal{I} (\{\alpha_i, \gamma_i \}_{i =
  1}^{\nu + 1 - \lambda})$ of $f^{- 1} D_{k_{\nu + 1}} (f\mathcal{F}_{k_1}
  \ldots \mathcal{F}_{k_{\nu}})$, such that $\lambda < \nu - 1$, we have that
  \[ \sum_{i = 1}^{\nu + 1 - \lambda} (\alpha_i + \gamma_i) \leq \sum_{j =
     1}^{\nu + 1} k_j - \sum_{j = 1}^{\lambda} k_{i_j} - \nu + 1 + \lambda <
     \sum_{j = 1}^{\nu + 1} k_j - \sum_{j = 1}^{\lambda} k_{i_j}, \]
  i.e., they are of the form $\mathcal{H}_{k_1, \ldots, k_{\nu + 1}}$.
  
  Let $\pi_1, \pi_2$ fixed, with $\pi_1 \sqcup \pi_2 = [\nu]$, and $\pi_1 \neq
  \emptyset$. We also assume that $\pi_2 = \{\iota_1, \ldots, \iota_{\lambda}
  \}$. We consider the summands of $f^{- 1} D_{k_{\nu + 1}} \left(
  f\mathcal{F}_{k_{\iota_1}} \ldots \mathcal{F}_{k_{\iota_{\lambda}}} 
  \prod_{(i, j) \in \pi_1} \mathcal{G}_{k_i, k_j} \right)$ of the form
  $\mathcal{F}_{k_{\theta_1}} \ldots \mathcal{F}_{k_{\theta_{\mu}}}
  \mathcal{I} (\{\alpha_i, \gamma_i \}_{i = 1}^{\nu + 1 - \mu})$, where
  $\{\theta_1, \ldots, \theta_{\mu} \} \subsetneq \{\iota_1, \ldots,
  \iota_{\lambda} \}$, and $\mathcal{I} (\{\alpha_i, \gamma_i \}_{i = 1}^{\nu
  + 1 - \mu})$ has at least one factor different from $\mathcal{G}_{k, l}$.
  This implies that
  \[ \sum_{i = 1}^{\nu + 1 - \mu} (\alpha_i + \gamma_i) < k_{\nu + 1} +
     \sum_{j = 1}^{\lambda} k_{\iota_j} - \sum_{j = 1}^{\mu} k_{\theta_j} +
     \sum_{(i, j) \in \pi_1} (k_i + k_j) - \lambda + 1 + \mu \leq \sum_{j =
     1}^{\nu + 1} k_j - \sum_{j = 1}^{\mu} k_{\theta_j} . \]
  We also have to consider the summands of the form $\mathcal{F}_{k_{\iota_1}}
  \ldots \mathcal{F}_{k_{\iota_{\lambda}}} \mathcal{I} (\{\alpha_i, \gamma_i
  \}_{i = 1}^{2| \pi_1 | + 1})$. As a corollary of Lemma \ref{nbcf}, for those
  we have
  \[ \sum_{i = 1}^{2 | \pi_1 | + 1} (\alpha_i + \gamma_i) \leq k_{\nu + 1} - 1
     + \sum_{(i, j) \in \pi_1} (k_i + k_j) < \sum_{j = 1}^{\nu + 1} k_j -
     \sum_{j = 1}^{\lambda} k_{\iota_j} . \]
  Finally, we consider $f \cdot \mathcal{H}_{k_1, \ldots, k_{\nu}}$, or
  equivalently $f\mathcal{F}_{k_{i_1}} \ldots \mathcal{F}_{k_{i_{\lambda}}}
  \mathcal{I} (\{\alpha_i, \gamma_i \}_{i = 1}^{\nu - \lambda})$ for
  appropriate $\alpha_i, \gamma_i$. Applying again Lemma \ref{nbcf}, $f^{- 1}
  D_{k_{\nu + 1}} \left( f\mathcal{F}_{k_{i_1}} \ldots
  \mathcal{F}_{k_{i_{\lambda}}} \mathcal{I}(\{\alpha_i, \gamma_i \}_{i =
  1}^{\nu - \lambda}) \right)$ is a big sum where some of its summands have
  the form $\mathcal{F}_{k_{\iota_1}} \ldots \mathcal{F}_{k_{\iota_{\mu}}}
  \mathcal{I} (\{\alpha_i', \gamma_i' \}_{i = 1}^{\nu + 1 - \mu})$, where $\mu
  \leq \lambda - 1$, $\{\iota_1, \ldots, \iota_{\mu} \} \subseteq \{i_1,
  \ldots, i_{\lambda} \}$, and
  \[ \sum_{i = 1}^{\nu + 1 - \mu} (\alpha_i' + \gamma_i') \leq \sum_{i =
     1}^{\nu - \lambda} (\alpha_i + \gamma_i) + k_{\nu + 1} + \sum_{j =
     1}^{\lambda} k_{i_j} - \sum_{j = 1}^{\mu} k_{\iota_j} - \lambda + 1 + \mu
  \]
  \[ < \sum_{j = 1}^{\nu} k_j - \sum_{j = 1}^{\lambda} k_{i_j} + k_{\nu + 1} +
     \sum_{j = 1}^{\lambda} k_{i_j} - \sum_{j = 1}^{\mu} k_{\iota_j} = \sum_{j
     = 1}^{\nu + 1} k_j - \sum_{j = 1}^{\mu} k_{\iota_j} . \]
  In the last inequality we used our induction hypothesis. For the same
  reason, the rest of its summands have the form $\mathcal{F}_{k_{i_1}} \ldots
  \mathcal{F}_{k_{i_{\lambda}}} \mathcal{I} (\{\alpha_i', \gamma_i' \}_{i =
  1}^{\nu - \lambda + 1})$, where
  \[ \sum_{i = 1}^{\nu - \lambda + 1} (\alpha_i' + \gamma_i') \leq k_{\nu + 1}
     - 1 + \sum_{i = 1}^{\nu - \lambda} (\alpha_i + \gamma_i) < k_{\nu + 1} -
     1 + \sum_{j = 1}^{\nu} k_j - \sum_{j = 1}^{\lambda} k_{i_j} < \sum_{j =
     1}^{\nu + 1} k_j - \sum_{j = 1}^{\lambda} k_{i_j} . \]
  This proves the claim.
\end{proof}

\subsection{Random weights and limit shape}

Now, we consider the parameters $\tmmathbf{x}_i, \tmmathbf{\beta}_j$ of the
Schur measure $\rho_{\tmmathbf{\beta}, \tmmathbf{y}}$ to be random. We
investigate sufficient conditions to obtain a limit shape for the random
measure $m_N [\overline{\rho_{\tmmathbf{\beta}, \tmmathbf{y}}}]$. We will show
that the existence of limit shapes for the empirical measures of
$\{\tmmathbf{x}_i \}_{i = 1}^M$ and $\{\tmmathbf{\beta}_j \}_{j = 1}^M$,
allows to extract a limit shape for $m_N [\overline{\rho_{\tmmathbf{\beta},
\tmmathbf{y}}}]$.

\begin{definition}
  \label{Def}Let $\{\tmmathbf{y}_i \}_{i = 1}^M$, $\{\tmmathbf{\beta}_j \}_{j
  = 1}^M$ be sequences of random variables. We will call such sequences
  (jointly) LLN appropriate if the following hold:
  \begin{itemize}
    {\item Let $r, l$ be non-negative integers and $0 < \gamma_1,
    \ldots, \gamma_r, \varepsilon_1, \ldots, \varepsilon_l \leq 1$. Then, for
    any collection of non-negative integers $k_1, \ldots, k_r, m_1, \ldots,
    m_l$, we have that
    \[ \lim_{M \rightarrow \infty} \mathbb{E} \left[ \prod_{i = 1}^r \sum_{j =
       1}^{\lfloor \gamma_i M \rfloor} \frac{\tmmathbf{y}_j^{k_i}}{\lfloor
       \gamma_i M \rfloor} \cdot \prod_{i = 1}^l \sum_{j = 1}^{\lfloor
       \varepsilon_i M \rfloor} \frac{\tmmathbf{\beta}_j^{m_i}}{\lfloor
       \varepsilon_i M \rfloor} \right] \text{{\hspace{10em}}} \]
    \[ \text{{\hspace{7em}}} = \prod_{i = 1}^r \prod_{j = 1}^l \lim_{M
       \rightarrow \infty} \mathbb{E} \left[ \frac{\tmmathbf{y}_1^{k_i} +
       \cdots +\tmmathbf{y}_M^{k_i}}{M} \right] \mathbb{E} \left[
       \frac{\tmmathbf{\beta}_1^{m_j} + \cdots +\tmmathbf{\beta}_M^{m_j}}{M}
       \right] . \]}{\item Let $\mathfrak{c}_m = \lim_{M \rightarrow \infty}
    M^{- 1} \mathbb{E} [\tmmathbf{y}_1^m + \cdots +\tmmathbf{y}_M^m]$, and
    $\mathfrak{g}_m = \lim_{M \rightarrow \infty} M^{- 1} \mathbb{E}
    [\tmmathbf{\beta}_1^m + \cdots +\tmmathbf{\beta}_M^m]$. Then, the power
    series
    \[ \sum_{m = 0}^{\infty} \mathfrak{c}_m z^m \text{\quad and\quad} \sum_{m
       = 0}^{\infty} \mathfrak{g}_m z^m, \]
    converge uniformly in a neighborhood of $0$.}
  \end{itemize}
\end{definition}

Note that the convergence in expectation condition of Definition \ref{Def}
implies that the random empirical measures
\[ \frac{1}{N} \sum_{i = 1}^N \delta_{\tmmathbf{y}_i} \text{\quad and\quad}
   \frac{1}{N} \sum_{i = 1}^N \delta_{\tmmathbf{\beta}_i}, \]
converge in probability (and in the sense of moments) to deterministic
probability measures on $\mathbb{R}$. In the next theorem we show that such
asymptotic condition for the random parameters of the Schur measure
$\rho_{\tmmathbf{\beta}, \tmmathbf{y}}$ passes to $m_N
[\overline{\rho_{\tmmathbf{\beta}, \tmmathbf{y}}}]$.

\begin{theorem}
  \label{LLN!}Assume that $\{\tmmathbf{x}_i \}_{i = 1}^M, \{\tmmathbf{\beta}_j
  \}_{j = 1}^M$ are LLN appropriate sequences of random variables. We also
  denote $\tmmathbf{y}_i = 1 -\tmmathbf{x}_i$, and
  \[ \mathbf{F}_1 (z) \assign \sum_{i = 0}^{\infty} \mathfrak{c}_i z^i
     \text{\quad and\quad} \mathbf{F}_2 (z) \assign \sum_{i = 0}^{\infty}
     \mathfrak{g}_i z^i, \]
  where $\mathfrak{c}_i \assign \lim_{M \rightarrow \infty} \mathbb{E} [M^{-
  1} (\tmmathbf{y}_1^i + \cdots +\tmmathbf{y}_M^i)]$ and $\mathfrak{g}_i
  \assign \lim_{M \rightarrow \infty} \mathbb{E} [M^{- 1}
  (\tmmathbf{\beta}_1^i + \cdots +\tmmathbf{\beta}_M^i)]$. Then, for the
  random measure $m_N [\overline{\rho_{\tmmathbf{\beta}, \tmmathbf{y}}}]$ we
  have that
  \begin{equation}
    \lim_{\underset{N / M \rightarrow \gamma}{N, M \rightarrow \infty}}
    \mathbb{E}  \left[ \prod_{j = 1}^{\nu} \left( \frac{1}{N} \sum_{i = 1}^N
    \left( \frac{\lambda_i + N - i}{N} \right)^{k_j} \right) \right] =
    \prod_{j = 1}^{\nu} \mathfrak{m}_{k_j}, \label{genca}
  \end{equation}
  where $k_1, \ldots, k_{\nu}$ are arbitrary non-negative integers, and
  \begin{equation}
    \mathfrak{m}_k \assign \frac{1}{2 \pi i (k + 1)} \oint_{|z| = \varepsilon}
    \frac{1}{z - 1} \left( \left( \frac{1}{\gamma} - 1 \right) \left( \frac{1
    - z}{z} \mathbf{F}_2 (z) - \frac{1 - z}{z} \right) + \frac{z - 1}{z}
    \mathbf{F}_1 \left( \frac{1}{z} \right) \right)^{k + 1} d \text{} z.
    \label{momenh}
  \end{equation}
\end{theorem}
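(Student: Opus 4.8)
The strategy is to pass from the random setting to a suitable deterministic average, via the moment computations carried out in Section \ref{S3} for deterministic parameters. Fix non-negative integers $k_1,\dots,k_\nu$. Since $\overline{\rho_{\tmmathbf{\beta},\tmmathbf{y}}}[\lambda]=\mathbb{E}[\rho_{\tmmathbf{\beta},\tmmathbf{y}}[\lambda]]$, Proposition \ref{Prop1} and Lemma \ref{Lemapent} give, for a fixed realization of the parameters,
\[
\mathbb{E}_{\rho_{\beta,y}}\left[\prod_{j=1}^\nu\Bigl(\textstyle\sum_{i=1}^N(\lambda_i+N-i)^{k_j}\Bigr)\right]=\frac{D_{k_\nu}\cdots D_{k_1}f}{f}=\mathcal{F}_{k_1}\cdots\mathcal{F}_{k_\nu}+\sum_{\pi_1\neq\emptyset}\Bigl(\prod_{(i,j)\in\pi_1}\mathcal{G}_{k_i,k_j}\Bigr)\Bigl(\prod_{i\in\pi_2}\mathcal{F}_{k_i}\Bigr)+\mathcal{H}_{k_1,\dots,k_\nu}.
\]
Dividing by $N^{k_1+\cdots+k_\nu+\nu}$ and taking expectation over the random weights, the plan is to show that (i) $N^{-(k+1)}\mathcal{F}_k$, after taking $\mathbb{E}$, converges to $\mathfrak{m}_k$; (ii) the $\mathcal{G}$-terms and the $\mathcal{H}$-terms are of strictly lower order and vanish after rescaling; and (iii) the expectation of the leading product $\prod_j N^{-(k_j+1)}\mathcal{F}_{k_j}$ factorizes in the limit into $\prod_j\mathfrak{m}_{k_j}$ — this last point is exactly where the factorization hypothesis in Definition \ref{Def} is used.

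First I would analyze the single contour integral $\mathcal{F}_k$ from Lemma \ref{Lem2}. Each summand there carries a subset $\{l_0,\dots,l_m\}\subseteq[N]$ and an ordered tuple of distinct indices $i_1,\dots,i_{k-m}\in[N;M]$, and the dominant contribution as $N,M\to\infty$ with $N/M\to\gamma$ comes from the term $m=k$ (all residue factors from the $y$'s) together with the appropriate balance of $\beta$-factors; the combinatorial prefactors $\binom{k}{m}(-1)^m m!$ reorganize the sum. Replacing the normalized power sums $N^{-1}\sum_{l}(z-y_l)^{-1}$ and $(M-N)^{-1}\sum_j \beta_j/(1-\beta_j z)$ by their expectations, which by the LLN-appropriateness converge to the generating functions $\frac{1-z}{z}\mathbf{F}_1(1/z)\cdot\frac{1}{z-1}$-type expressions and $\mathbf{F}_2(z)$-type expressions respectively, one obtains after collecting terms a single residue at $z=0$ (the contour, encircling the $y_i\in(-1,1)$ but none of the $\beta_j^{-1}$, can be deformed using analyticity of $\mathbf{F}_1,\mathbf{F}_2$ near $0$ guaranteed by the uniform convergence condition). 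A binomial-sum identity of the type $\sum_m\binom{k}{m}(\cdots)$ collapses the $k$-th power, and the factor $1/(k+1)$ emerges from $\oint z^k\,dz$-bookkeeping; matching with (\ref{momenh}) is then a direct computation.

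For the lower-order terms: by the degree bounds in Lemma \ref{Lemapent}, every $\mathcal{I}(\{\alpha_i,\gamma_i\})$ appearing in $\mathcal{H}_{k_1,\dots,k_\nu}$ satisfies $\sum(\alpha_i+\gamma_i)<\sum k_i-\sum_j k_{i_j}$, so each such $\mathcal{I}$ contributes at most $N^{\sum(\alpha_i+\gamma_i)}$ (roughly one power of $N$ per $y$-residue or $\beta$-sum), hence the whole $\mathcal{H}$-term is $o(N^{k_1+\cdots+k_\nu})$ and disappears under the normalization by $N^{k_1+\cdots+k_\nu+\nu}$; the same counting shows that each $\mathcal{G}_{k_i,k_j}$ is $O(N^{k_i+k_j+1})$, i.e. down by one power of $N$ from $\mathcal{F}_{k_i}\mathcal{F}_{k_j}=O(N^{k_i+k_j+2})$, so every term in the $\pi_1\neq\emptyset$ sum is negligible. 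The remaining point is the factorization of $\mathbb{E}\bigl[\prod_j N^{-(k_j+1)}\mathcal{F}_{k_j}\bigr]$ into $\prod_j\mathbb{E}\bigl[N^{-(k_j+1)}\mathcal{F}_{k_j}\bigr]$: expanding each $\mathcal{F}_{k_j}$ as a sum of products of normalized power sums of the $\tmmathbf{y}$'s and $\tmmathbf{\beta}$'s, this is exactly the content of the first bullet of Definition \ref{Def} (the expectation of a product of such normalized power sums factorizes in the limit), once one checks that the error from the distinct-index constraints $i_s\neq i_r$ and from replacing $M-N$ by $M$ (up to the factor $\tfrac1\gamma-1$) is negligible. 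I expect the main obstacle to be precisely this bookkeeping: carefully matching the combinatorics of the contour-integral summands in $\mathcal{F}_k$ — the interplay of $[N]$-subsets, ordered $[N;M]$-tuples, the Stirling/binomial coefficients, and the residue calculus at $z=0$ — against the explicit formula (\ref{momenh}), while simultaneously justifying the interchange of the $N,M\to\infty$ limit with the contour integral and with the expectation. Uniformity estimates (bounded weights by Convention \ref{convention}, and the uniform convergence of the power series $\mathbf{F}_1,\mathbf{F}_2$) provide the domination needed for these interchanges.
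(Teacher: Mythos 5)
Your proposal follows essentially the same route as the paper: apply the differential operators to the Cauchy identity, use Lemmas \ref{Lem2}--\ref{Lemapent} to isolate the leading term $\mathcal{F}_{k_1}\cdots\mathcal{F}_{k_\nu}$, discard the $\mathcal{G}$- and $\mathcal{H}$-terms (and the lower Stirling summands) by the degree bounds, and invoke the factorization condition of Definition \ref{Def} to pass to the limit and match (\ref{momenh}). Two local slips do not affect the conclusion but should be corrected: in $\mathcal{F}_k$ no single binomial summand dominates --- since $M-N\asymp(\gamma^{-1}-1)N$, every $0\le m\le k$ carries exactly $k+1$ normalized index sums and hence contributes at order $N^{k+1}$, which is precisely why the binomial identity you invoke produces the $(k+1)$-st power in (\ref{momenh}) --- and each $\mathcal{G}_{k_i,k_j}$ is $O(N^{k_i+k_j})$, i.e.\ down by two powers of $N$ from $\mathcal{F}_{k_i}\mathcal{F}_{k_j}$, not one (this is what lets it survive only under the $1$-CLT scaling later in the paper).
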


\begin{proof}
  We first show the claim for $\nu = 1$. Initially, we consider the parameters
  $x_i, \beta_j$ of the Schur measure to be deterministic, and at the end we
  will integrate with respect to their joint distribution. Applying
  $\mathcal{D}_{k_1}$ (that acts on the variables $x_i$) to both sides of the
  Cauchy identity (\ref{CI}), and dividing by $f (\vec{x}) = \prod_{i = 1}^N
  \prod_{j = N + 1}^M (1 - \beta_j + \beta_j x_i)$, we get an expression for
  \[ \sum_{\lambda \in \mathbb{G}\mathbb{T}_N} \left( \sum_{i = 1}^N
     (\lambda_i + N - i)^k \right) \rho_{\beta, y} [\lambda] , \]
  which can be analyzed as $N, M \rightarrow \infty$. First, we do it for its
  summand $\mathcal{F}_{k_1} = f^{- 1} D_{k_1} f$. We recall that
  \[ \mathcal{F}_{k_1} = \frac{1}{2 \pi i} \sum_{m = 0}^{k_1} \binom{k_1}{m}
     (- 1)^m m! \sum_{\{l_0, \ldots, l_m \} \subseteq [N]} \sum_{\underset{i_s
     \neq i_r \text{ for } s \neq r}{i_1, \ldots, i_{k_1 - m} \in [N ; M]}} \]
  \[ \oint \frac{(1 - z)^{k_1}}{(z - y_{l_0}) \ldots (z - y_{l_m})} 
     \frac{\beta_{i_1} \ldots \beta_{i_{k_1 - m}}}{(1 - \beta_{i_1} z) \ldots
     (1 - \beta_{i_{k_1 - m}} z)} d \text{} z, \]
  where the contour is a circle of radius smaller than 1, that encircles all
  the variables $y_i$. Using that
  \[ \sum_{\underset{i_s \neq i_r \text{ for } s \neq r}{i_1, \ldots, i_{k_1 -
     m} \in [N ; M]}} \frac{\beta_{i_1} \ldots \beta_{i_{k_1 - m}}}{(1 -
     \beta_{i_1} z) \ldots (1 - \beta_{i_{k_1 - m}} z)} = \frac{(- 1)^{k - m}
     \partial_z^{k - m} \left( \prod_{j = N + 1}^M (1 - \beta_j z) \right)
     }{\prod_{j = N + 1}^M (1 - \beta_j z)} \]
  \begin{equation}
    = (- 1)^{k - m} \prod_{j = N + 1}^M (1 - \beta_j z)^{- 1} \partial_z^{k -
    m} \exp \left( - (M - N) \sum_{j = 1}^{\infty} \frac{p_j (\beta_{N + 1},
    \ldots, \beta_M)}{M - N}  \frac{z^j}{j} \right), \label{asx}
  \end{equation}
  we write this sum as a linear combination of products of $\frac{1}{M - N}
  \sum_{j = 1}^{\infty} p_j (\beta_{N + 1}, \ldots, \beta_M) z^{j - 1}$ and
  its derivatives, where the coefficients are monomials of $M - N$. By $p_j$,
  we denote the power sum symmetric polynomial of degree $j$. Similarly, by
  \[ \sum_{\{l_0, \ldots, l_m \} \subseteq [N]} \frac{1}{(z - y_{l_0}) \ldots
     (z - y_{l_m})} = \frac{1}{(m + 1) !}  \prod_{i = 1}^N (z - y_i)^{- 1} 
     \text{{\hspace{11em}}} \]
  \begin{equation}
    \text{{\hspace{14em}}} \times \partial_z^{m + 1} \exp \left( N \text{}
    \log \text{} z - N \sum_{j = 1}^{\infty} \frac{p_j (y_1, \ldots, y_N)}{N} 
    \frac{1}{jz^j} \right), \label{zxcv}
  \end{equation}
  we write this sum as a linear combination of products of $\frac{1}{N}
  \sum_{j = 0}^{\infty} p_j (y_1, \ldots, y_N) z^{- j - 1}$ and its
  derivatives, where the coefficients are monomials of $N$. Since our random
  weights are LLN appropriate, only the summands of (\ref{asx}), (\ref{zxcv})
  that correspond to the largest powers of $M - N$, $N$ respectively, will
  contribute to the limit. For the same reason, the summands $f^{- 1} D_k f$
  of $f^{- 1} \mathcal{D}_{k_1} f$, with $k < k_1$, will not contribute to the
  limit because they are linear combinations of contour integrals $\mathcal{I}
  (\alpha, \gamma)$, where $\alpha + \gamma \leq k_1$. Therefore, we deduce
  that
  \[ \lim_{\underset{N / M \rightarrow \gamma}{N, M \rightarrow \infty}}
     \mathbb{E} \left[ \frac{1}{N^{k_1 + 1}} \sum_{i = 1}^N (\lambda_i + N -
     i)^{k_1} \right] = \lim_{\underset{N / M \rightarrow \gamma}{N, M
     \rightarrow \infty}} \frac{1}{2 \pi i} \oint \sum_{m = 0}^{k_1}
     \binom{k_1}{m} \frac{(- 1)^m}{m + 1} \left( \frac{1}{\gamma} - 1
     \right)^{k_1 - m} \frac{(1 - z)^{k_1}}{z^{k_1 + 1}} \]
  \[ \times \mathbb{E}_{\tmmathbf{\beta}, \tmmathbf{y}} \left[ \left( \sum_{j
     = 1}^{\infty} \frac{p_j (\tmmathbf{\beta}_{N + 1}, \ldots,
     \tmmathbf{\beta}_M)}{M - N} z^j \right)^{k_1 - m} \left( \sum_{j =
     0}^{\infty} \frac{p_j (\tmmathbf{y}_1, \ldots, \tmmathbf{y}_N)}{N} 
     \frac{1}{z^j} \right)^{m + 1} \right] d \text{} z =\mathfrak{m}_{k_1} .
  \]
  For the general case (\ref{genca}), we follow a similar procedure. Namely,
  at the beginning we consider $x_i, \beta_j$ deterministic, and we apply
  $f^{- 1} \mathcal{D}_{k_{\nu}} \ldots \mathcal{D}_{k_1}$ at both sides of
  the Cauchy identity (\ref{CI}). Using (\ref{Stirling}), the right hand side
  of the Cauchy identity will give a linear combination of $f^{- 1}
  D_{l_{\nu}} \ldots D_{l_1} f$, where $l_i \leq k_i$. In Lemma \ref{Lemapent}
  we shown $f^{- 1} D_{l_{\nu}} \ldots D_{l_1} f$ is a sum of contour
  integrals $\mathcal{I} (\{\alpha_i, \gamma_i \}_{i = 1}^{\nu})$. Using
  (\ref{asx}), (\ref{zxcv}), and the LLN appropriateness, we have that only
  the contour integrals with maximal $\sum_{i = 1}^{\nu} \alpha_i + \gamma_i
  \leq \sum_{i = 1}^{\nu} k_i + \nu$ will contribute to the limit. But there
  is only one such integral, it emerges for $l_i = k_i$ and it is equal to
  $\mathcal{F}_{k_1} \ldots \mathcal{F}_{k_{\nu}}$. This proves the claim.
\end{proof}

\begin{example}
  \label{exampl8}In Theorem \ref{LLN!}, under special cases, we can have
  explicit formulas for the functions $\mathbf{F}_1$, $\mathbf{F}_2$ that
  appear in the moments of the limiting measure. The simplest case to consider
  is $\{\tmmathbf{y}_i, \tmmathbf{\beta}_j \}$ to be independent random
  variables, $\{\tmmathbf{\beta}_j \}$ to be identically distributed and
  $\{\tmmathbf{y}_i \}$ to be identically distributed as well. Then, we have
  that
  \[ \mathbf{F}_2 (z) =\mathbb{E}_{\tmmathbf{\beta}_1} \left[ \frac{1}{1
     -\tmmathbf{\beta}_1 z} \right] \text{\quad and\quad} \mathbf{F}_1 (z)
     =\mathbb{E}_{\tmmathbf{y}_1} \left[ \frac{1}{1 -\tmmathbf{y}_1 z}
     \right], \]
  where we assume that the above expectations converge. By the formula
  (\ref{momenh}) for the moments, we get
  \[ \mathfrak{m}_k = \frac{1}{2 \pi i (k + 1)} \oint_{|z| = \varepsilon} 
     \frac{1}{z - 1} \left( \left( \frac{1}{\gamma} - 1 \right)
     \mathbb{E}_{\tmmathbf{\beta}_1} \left[ \frac{\tmmathbf{\beta}_1
     -\tmmathbf{\beta}_1 z}{1 -\tmmathbf{\beta}_1 z} \right]
     +\mathbb{E}_{\tmmathbf{y}_1} \left[ \frac{z - 1}{z -\tmmathbf{y}_1}
     \right] \right)^{k + 1} d \text{} z. \]
  The above contour encircles the poles of $z \mapsto
  \mathbb{E}_{\tmmathbf{y}_1} [(z -\tmmathbf{y}_1)^{- 1}]$. For the case where
  the distribution of $\tmmathbf{y}_1$ is $\delta_0$, we recover the formula
  for the moments of Proposition 5.12 of {\cite{B52}}. The above formula
  provides a generalization of their result.
\end{example}

\begin{remark}
  The conditions in Convention \ref{convention}, for the random parameters of
  Theorem \ref{LLN!}, are important in order to find a contour $C_z$ such that
  the power series $\sum_{j \geq 0} z^{- j} p_j (\tmmathbf{y}_1, \ldots,
  \tmmathbf{y}_N)$, $\sum_{j \geq 0} p_j (\tmmathbf{\beta}_{N + 1}, \ldots,
  \tmmathbf{\beta}_M) z^j$ converge for very $z \in C_z$. For the case where
  the distributions of $\tmmathbf{x}_1, \ldots, \tmmathbf{x}_M$ are compactly
  supported, the same arguments work under slight modifications.
\end{remark}

\section{The Annealed Central Limit Theorem for one level}\label{LLL}

In the current section, we study global fluctuations around the limit shape,
for the annealed case, for the model with random edge weights. This is done
under certain conditions for the random weights. For the Law of Large Numbers,
we proved that if the random empirical measures $M^{- 1} \sum_{i = 1}^M
\delta_{\tmmathbf{\beta}_i}$, $M^{- 1} \sum_{i = 1}^M \delta_{\tmmathbf{x}_i}$
converge to a deterministic limit shape (in the sense of Definition
\ref{Def}), then the random empirical measures $m_N
[\overline{\rho_{\tmmathbf{\beta}, \tmmathbf{y}}}]$ will also converge to a
deterministic limit shape. We will show that the same phenomenon occurs for
the fluctuations, namely if under proper scaling the empirical measures of the
random edge weights have Gaussian fluctuations, then under the same scaling
$m_N [\overline{\rho_{\tmmathbf{\beta}, \tmmathbf{y}}}]$ will have Gaussian
fluctuations as well. We provide explicit formulas for the covariance. We also
consider two different limit regimes, where we have Gaussian fluctuations
under different scalings. These results generalize Theorem 4.5 and Theorem 7.1
of {\cite{B52}}.

\subsection{First limit regime: classical scaling}\label{hjklm}

We start with the first limit regime that corresponds to scaling $\sqrt{N}$.
This is compatible to the classical central limit theorem. The next definition
makes clear the difference between the two limit regimes that we consider.
Before we state it, to simplify notation, in the following given a random
variable $Z$ we will denote $\dot{Z} = Z -\mathbb{E} [Z]$, and $\bar{Z}
=\mathbb{E} [Z]$.

\begin{definition}
  \label{BBBB}Let $\{\tmmathbf{y}_i \}_{i = 1}^M$, $\{\tmmathbf{\beta}_j \}_{j
  = 1}^M$ be sequences of random variables and $\varepsilon \in (0, 1)$. We
  will call such sequences $\varepsilon$-CLT appropriate if the following
  hold:
  \begin{itemize}
    {\item The collection of random variables
    \[ \left\{ \frac{1}{M^{1 - \varepsilon}} \sum_{i = 1}^{\left\lfloor M
       \text{} t \right\rfloor} (\tmmathbf{y}_i^k -\mathbb{E}
       [\tmmathbf{y}_i^k]), \frac{1}{M^{1 - \varepsilon}} \sum_{i =
       1}^{\left\lfloor M \text{} s \right\rfloor} (\tmmathbf{\beta}_i^l
       -\mathbb{E} [\tmmathbf{\beta}_i^l]) \right\}_{0 < s, t \leq 1, \text{ }
       k, l \geq 1} \]
    converges as $M \rightarrow \infty$, in the sense of moments to a Gaussian
    random vector.}
    
    \item Let
    \[ \mathfrak{q}_{k, l}^{(t, s)} \assign \lim_{M \rightarrow \infty}
       \mathbb{E} \left[ \frac{1}{M^{2 - 2 \varepsilon}} \sum_{i =
       1}^{\left\lfloor M \text{} t \right\rfloor} (\tmmathbf{\beta}_i^k
       -\mathbb{E} [\tmmathbf{\beta}_i^k]) \cdot \sum_{i = 1}^{\left\lfloor M
       \text{} s \right\rfloor} (\tmmathbf{\beta}_i^l -\mathbb{E}
       [\tmmathbf{\beta}_i^l]) \right], \]
    \[ \mathfrak{p}_{k, l}^{(t, s)} \assign \lim_{M \rightarrow \infty}
       \mathbb{E} \left[ \frac{1}{M^{2 - 2 \varepsilon}} \sum_{i =
       1}^{\left\lfloor M \text{} t \right\rfloor} (\tmmathbf{y}_i^k
       -\mathbb{E} [\tmmathbf{y}_i^k]) \cdot \sum_{i = 1}^{\left\lfloor M
       \text{} s \right\rfloor} (\tmmathbf{y}_i^l -\mathbb{E}
       [\tmmathbf{y}_i^l]) \right], \]
    \[ \mathfrak{s}_{k, l}^{(t, s)} \assign \lim_{M \rightarrow \infty}
       \mathbb{E} \left[ \frac{1}{M^{2 - 2 \varepsilon}} \sum_{i =
       1}^{\left\lfloor M \text{} t \right\rfloor} (\tmmathbf{\beta}_i^k
       -\mathbb{E} [\tmmathbf{\beta}_i^k]) \cdot \sum_{i = 1}^{\left\lfloor M
       \text{} s \right\rfloor} (\tmmathbf{y}_i^l -\mathbb{E}
       [\tmmathbf{y}_i^l]) \right] . \]
    Then, the power series
    \[ \sum_{k, l \geq 1} \mathfrak{q}_{k, l}^{(t, s)} z^k w^l \text{, \quad}
       \sum_{k, l \geq 1} \mathfrak{p}_{k, l}^{(t, s)} z^k w^l \text{, \quad}
       \sum_{k, l \geq 1} \mathfrak{s}_{k, l}^{(t, s)} z^k w^l \]
    converge uniformly in a neighborhood of $(0, 0)$.
  \end{itemize}
\end{definition}

\subsubsection{Computation of the covariance for the first limit regime}

In the following, our $\varepsilon$-CLT appropriate random parameters
correspond to the cases where $\varepsilon = 1 / 2, 1$. The classical case
that we deal with in the current subsection is $\varepsilon = 1 / 2$. We will
assume that the random parameters of $\overline{\rho_{\tmmathbf{\beta},
\tmmathbf{y}}}$ are $2^{- 1}$-CLT appropriate. Before we show asymptotic
Gaussianity for $m_N [\overline{\rho_{\tmmathbf{\beta}, \tmmathbf{y}}}]$, we
first compute the covariance. The following notation will simplify our
formulas: For the random parameters of $\overline{\rho_{\tmmathbf{\beta},
\tmmathbf{y}}}$, we denote $\tmmathbf{X}_{j, N, M} \assign \frac{1}{M - N} p_j
(\tmmathbf{\beta}_{N + 1}, \ldots, \tmmathbf{\beta}_M)$, and $\tmmathbf{Y}_{j,
N} \assign \frac{1}{N} p_j (\tmmathbf{y}_1, \ldots, \tmmathbf{y}_N)$, where
$\tmmathbf{y}_i = 1 -\tmmathbf{x}_i$ and $N < M$.

\begin{theorem}
  \label{CLT1!}Assume that $\{\tmmathbf{x}_i \}_{i = 1}^M,
  \{\tmmathbf{\beta}_j \}_{j = 1}^M$ are $2^{- 1}$-CLT appropriate sequences
  of random variables. We also denote $\tmmathbf{y}_i = 1 -\tmmathbf{x}_i$,
  and
  \[ \mathbf{G} _1 (z, w) \assign \sum_{i, j \geq 1}
     \mathfrak{q}^{(\gamma)}_{i, j} z^i w^j \text{, \quad} \mathbf{G} _2 (z,
     w) \assign \sum_{i, j \geq 1} \mathfrak{p}^{(\gamma)}_{i, j} z^i w^j
     \text{, \quad} \mathbf{G}_3 (z, w) \assign \sum_{i, j \geq 1}
     \mathfrak{s}^{(\gamma)}_{i, j} z^i w^j, \]
  where
  \[ \mathfrak{q}_{i, j}^{(\gamma)} = \lim_{\underset{N / M \rightarrow
     \gamma}{N, M \rightarrow \infty}} (M - N) \mathbb{E}
     [\dot{\tmmathbf{X}}_{i, N, M}   \dot{\tmmathbf{X}}_{j, N, M}] \text{,
     \quad} \mathfrak{p}_{i, j}^{(\gamma)} = \lim_{\underset{N / M \rightarrow
     \gamma}{N, M \rightarrow \infty}} N\mathbb{E} [\dot{\tmmathbf{Y}}_{i, N} 
     \dot{\tmmathbf{Y}}_{j, N}], \]
  \[ \mathfrak{s}_{i, j}^{(\gamma)} = \lim_{\underset{N / M \rightarrow
     \gamma}{N, M \rightarrow \infty}} \sqrt{(M - N) N} \mathbb{E}
     [\dot{\tmmathbf{X}}_{i, N, M}  \dot{\tmmathbf{Y}}_{j, N}] . \]
  Then, we have that
  \[ \mathfrak{c}_{k_1, k_2} \assign \lim_{\underset{N / M \rightarrow
     \gamma}{N, M \rightarrow \infty}} \frac{1}{N^{k_1 + k_2 + 1}} \mathbb{E}
     [\dot{p_{k_1}} (\lambda_1 + N - 1, \ldots, \lambda_N) \dot{p_{k_2}}
     (\lambda_1 + N - 1, \ldots, \lambda_N)] \text{{\hspace{3em}}} \]
  \[ \text{{\hspace{6em}}} = \frac{- 1}{4 \pi^2} \oint_{|w| = \varepsilon}
     \oint_{|z| = 2 \varepsilon} \frac{1}{zw} \left( \left( \frac{1}{\gamma} -
     1 \right) \left( \frac{1 - z}{z} \mathbf{F}_2 (z) - \frac{1 - z}{z}
     \right) + \frac{z - 1}{z} \mathbf{F}_1 \left( \frac{1}{z} \right)
     \right)^{k_1} \]
  \[ \times \left( \left( \frac{1}{\gamma} - 1 \right) \left( \frac{1 - w}{w}
     \mathbf{F}_2 (w) - \frac{1 - w}{w} \right) + \frac{w - 1}{w} \mathbf{F}_1
     \left( \frac{1}{w} \right) \right)^{k_2} \left( \mathbf{G}_2 \left(
     \frac{1}{z}, \frac{1}{w} \right) - \sqrt{\frac{1}{\gamma} - 1}
     \mathbf{G}_3 \left( w, \frac{1}{z} \right) \right. \]
  \begin{equation}
    \left. - \sqrt{\frac{1}{\gamma} - 1} \mathbf{G}_3 \left( z, \frac{1}{w}
    \right) + \left( \frac{1}{\gamma} - 1 \right) \mathbf{G}_1 (z, w) \right)
    d \text{} z \text{} d \text{} w, \label{vbcg}
  \end{equation}
  where the functions $\mathbf{F}_1, \mathbf{F}_2$ are the same as in Theorem
  \ref{LLN!}.
\end{theorem}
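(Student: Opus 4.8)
The plan is to combine the two‑level differential operator formula of Lemma \ref{Lemma3} with the asymptotic tools from the proof of Theorem \ref{LLN!}, and then to perform the covariance computation of the random empirical measures of the parameters. First I would start, as in the proof of Theorem \ref{LLN!}, with deterministic parameters $x_i,\beta_j$, and apply $f^{-1}\mathcal{D}_{k_2}\mathcal{D}_{k_1}f$ to the Cauchy identity (\ref{CI}). By (\ref{Stirling}) this reduces to a linear combination of $f^{-1}D_{l_2}D_{l_1}f$ with $l_i\le k_i$, and by Lemma \ref{Lemma3} each such term equals $\mathcal{F}_{l_1}\mathcal{F}_{l_2}+\mathcal{G}_{l_1,l_2}+\mathcal{H}_{l_1,l_2}$. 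The centered product $\dot p_{k_1}\dot p_{k_2}$ kills the leading $\mathbb{E}[\mathcal{F}_{k_1}]\mathbb{E}[\mathcal{F}_{k_2}]$ piece, so the surviving contributions at scale $N^{k_1+k_2+1}$ come from (a) $\mathbb{E}[\dot{\mathcal{F}}_{k_1}\dot{\mathcal{F}}_{k_2}]$, i.e. the fluctuations of the $\mathcal{F}$'s themselves, and (b) $\mathbb{E}[\mathcal{G}_{k_1,k_2}]$; the $\mathcal{H}$ terms and all terms with $l_i<k_i$ are lower order by the degree bookkeeping already established (they are $\mathcal{I}(\{\alpha_i,\gamma_i\})$ with $\sum(\alpha_i+\gamma_i)<k_1+k_2+2$), exactly as in Theorem \ref{LLN!}.

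Next I would rewrite $\mathcal{F}_{k}$ using (\ref{asx})–(\ref{zxcv}): modulo lower‑order terms it is a contour integral in $z$ whose integrand, after extracting the top powers of $M-N$ and $N$, is a polynomial in the normalized power sums $\tmmathbf{X}_{j,N,M}=\frac{1}{M-N}p_j(\tmmathbf{\beta}_{N+1},\dots,\tmmathbf{\beta}_M)$ and $\tmmathbf{Y}_{j,N}=\frac1N p_j(\tmmathbf{y}_1,\dots,\tmmathbf{y}_N)$. Under the $2^{-1}$-CLT appropriateness, these quantities have fluctuations of size $(M-N)^{-1/2}$ and $N^{-1/2}$; linearizing $\mathcal{F}_{k}$ around its limiting value (the integrand that produces $\mathfrak{m}_k$ in (\ref{momenh})) gives $\dot{\mathcal{F}}_{k}$ as a linear functional in $\dot{\tmmathbf{X}}_{j}$, $\dot{\tmmathbf{Y}}_{j}$ whose coefficients are read off by differentiating the $(k+1)$‑th power in (\ref{momenh}); in particular one gets the factor $\bigl(\tfrac1z(z-1)\mathbf{F}_1(1/z)+(\tfrac1\gamma-1)(\tfrac{1-z}{z}\mathbf{F}_2(z)-\tfrac{1-z}{z})\bigr)^{k_1}$ appearing in (\ref{vbcg}), with an extra derivative in $z$ absorbed into the prefactor $1/z$ and the change of the power from $k_1+1$ to $k_1$. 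Then $\mathbb{E}[\dot{\mathcal{F}}_{k_1}\dot{\mathcal{F}}_{k_2}]$ is a double contour integral whose inner kernel is exactly the covariance of the linearized power‑sum functionals, and this is where the functions $\mathbf{G}_1,\mathbf{G}_2,\mathbf{G}_3$ enter: $\mathbf{G}_2(1/z,1/w)$ from the $\tmmathbf{Y}$–$\tmmathbf{Y}$ covariances (note $p_j(\tmmathbf{y})$ contributes $z^{-j}$), $(\tfrac1\gamma-1)\mathbf{G}_1(z,w)$ from the $\tmmathbf{X}$–$\tmmathbf{X}$ covariances (each $\tmmathbf{X}$ carrying a $\sqrt{M/N-1}=\sqrt{1/\gamma-1}$ scaling mismatch plus a $z^j$), and the two cross terms $-\sqrt{1/\gamma-1}\,\mathbf{G}_3(w,1/z)$, $-\sqrt{1/\gamma-1}\,\mathbf{G}_3(z,1/w)$ from the $\tmmathbf{X}$–$\tmmathbf{Y}$ covariances; the relative signs come from tracking the $(-1)$'s in (\ref{asx})–(\ref{zxcv}) and in the $\mathcal{F}_k$ expansion.

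Finally I would check that $\mathbb{E}[\mathcal{G}_{k_1,k_2}]$ contributes to the same double integral in a way that is consistent with (\ref{vbcg}) — here one uses the explicit formula for $\mathcal{G}_{k_1,k_2}$ from Lemma \ref{Lemma3}, replaces the sums over subsets of $[N]$ and $[N;M]$ by the generating‑function identities (\ref{asx})–(\ref{zxcv}), takes the annealed expectation (which by LLN appropriateness factorizes the power sums into their deterministic limits $\mathbf{F}_1,\mathbf{F}_2$), and observes that the $(z-w)^{-2}$ kernel together with the relation $y_i=1-x_i$ produces precisely the $\partial_z\partial_w$‑type contribution that, after integration by parts in the contours $|z|=2\varepsilon$ encircling $|w|=\varepsilon$, merges with the $\dot{\mathcal{F}}\dot{\mathcal{F}}$ piece into the single clean expression (\ref{vbcg}). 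The main obstacle I expect is bookkeeping: correctly matching the combinatorial coefficients and signs coming from (i) the Stirling reduction, (ii) the Leibniz expansion in Lemma \ref{Lemma3}, and (iii) the linearization of $\mathcal{F}_k$, so that the $\mathcal{G}$‑contribution and the $\dot{\mathcal{F}}\dot{\mathcal{F}}$‑contribution assemble into exactly the stated kernel with the right normalization $-1/(4\pi^2)$ and the right arguments of $\mathbf{G}_1,\mathbf{G}_2,\mathbf{G}_3$; the analytic input (justifying that only the maximal‑degree terms survive, and that moment convergence upgrades to the stated limits) is routine given Convention \ref{convention} and Definition \ref{BBBB}.
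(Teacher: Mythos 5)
Your overall strategy (apply $f^{-1}\mathcal{D}_{k_2}\mathcal{D}_{k_1}f$ to the Cauchy identity, use Lemma \ref{Lemma3}, expand via (\ref{asx})--(\ref{zxcv}), split $\tmmathbf{X}_{j,N,M}$, $\tmmathbf{Y}_{j,N}$ into centered and mean parts, and keep only the terms with exactly one stochastic factor per variable) is the same as the paper's, and the part of your argument deriving the $\mathbf{G}_1,\mathbf{G}_2,\mathbf{G}_3$ kernel from $\mathbb{E}[\dot{\mathcal{F}}_{k_1}\dot{\mathcal{F}}_{k_2}]$ matches what the paper does. However, there is a genuine error in your treatment of $\mathcal{G}_{k_1,k_2}$. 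You list $\mathbb{E}[\mathcal{G}_{k_1,k_2}]$ as a surviving contribution at scale $N^{k_1+k_2+1}$ and claim in your last paragraph that its $(z-w)^{-2}$ kernel ``merges'' with the $\dot{\mathcal{F}}\dot{\mathcal{F}}$ piece into (\ref{vbcg}). This cannot be right: by the degree count of Lemma \ref{Lemma3}, $\mathcal{G}_{k_1,k_2}$ is a sum over $k_1-m$ plus $m$ indices in the $z$-variable and $k_2-\mu-1$ plus $\mu+1$ indices in the $w$-variable, i.e.\ it is of order $N^{k_1+k_2}$, one full power of $N$ below the normalization $N^{k_1+k_2+1}$ used in this theorem. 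In the $2^{-1}$-CLT regime the environment fluctuations are of relative size $N^{-1/2}$, so the centered product $\mathbb{E}[\dot p_{k_1}\dot p_{k_2}]$ lives at scale $N^{k_1+k_2+1}$, and $\mathcal{G}_{k_1,k_2}$ simply vanishes in the limit; accordingly, (\ref{vbcg}) contains no $(z-w)^{-2}$ term. The Gaussian-Free-Field contribution from $\mathcal{G}_{k_1,k_2}$ only appears in the second limit regime ($1$-CLT appropriateness, normalization $N^{k_1+k_2}$), which is Theorem \ref{gmstrs}; conflating the two regimes here would either produce a spurious extra term in the covariance or force you to discover mid-computation that your claimed contribution is zero. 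This distinction is precisely the structural point separating Theorems \ref{CLT1!} and \ref{gmstrs}, so the gap is conceptual, not mere bookkeeping.

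A secondary, smaller issue: you dismiss as routine the verification that all remaining terms (the $\mathcal{H}$ terms, the lower-order Stirling summands, and the potentially divergent terms arising when too many tuples carry no stochastic factor) either vanish or cancel against $-\mathbb{E}[p_{k_1}]\mathbb{E}[p_{k_2}]$. In the paper this cancellation is not automatic from centering alone; it requires the combinatorial identity with alternating binomial sums carried out in the proof of Theorem \ref{normal1!} (to which the paper's proof of Theorem \ref{CLT1!} explicitly defers). Your write-up should at least acknowledge that this step is where the centering is actually used quantitatively.
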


\begin{proof}
  To prove the claim we will use the results of Lemma \ref{Lem2} and Lemma
  \ref{Lemma3}. Similarly to what we did in Theorem \ref{LLN!}, at the
  beginning, we consider $x_1, \ldots, x_N, \beta_{N + 1}, \ldots, \beta_M$
  deterministic and at the end we will consider the expectation with respect
  to their joint distribution. As we saw, we can get an expression for
  $\mathbb{E} [p_{k_1} (\lambda_1 + N - 1, \ldots, \lambda_N) p_{k_2}
  (\lambda_1 + N - 1, \ldots, \lambda_N)]$ computing $f^{- 1}
  \mathcal{D}_{k_2} \mathcal{D}_{k_1} f$. First, we only consider its summand
  $f^{- 1} D_{k_2} D_{k_1} f$ and we will show that the covariance is due to
  $\mathcal{F}_{k_1} \mathcal{F}_{k_2}$. More precisely, (\ref{vbcg}) emerges
  from the expectation of the product
  \[ \prod_{i = 1}^2 \left[ \frac{N^{k_i + 1}}{2 \pi i} \sum_{m = 0}^{k_i}
     \binom{k_i}{m} \frac{(- 1)^m}{m + 1} \oint (1 - z)^{k_i} \text{$\left(
     \left( \frac{1}{\gamma} - 1 \right) \sum_{j = 1}^{\infty} X_{j, N, M}
     z^{j - 1} \right)^{k_i - m} \left( \sum_{j = 0}^{\infty} \frac{Y_{j,
     N}}{z^{j + 1}} \right)^{m + 1} d \text{} z$} \right] . \]
  The contours are taken as in the proof of Theorem \ref{LLN!}, to guarantee
  that the power series converge. Note that above we replaced $(M - N) / N$ by
  $\gamma^{- 1} - 1$, since at the end we will have $N, M \rightarrow \infty$.
  Writing $\tmmathbf{X}_{j, N, M} = \dot{\tmmathbf{X}}_{j, N, M} +
  \bar{\tmmathbf{X}}_{j, N, M}$, $\tmmathbf{Y}_{j, N} = \dot{\tmmathbf{Y}}_{j,
  N} + \bar{\tmmathbf{Y}}_{j, N}$ and using the binomial theorem, the
  expectation of the above product becomes an explicit linear combination of
  double contour integrals
  \[ N^{k_1 + k_2 + 2} \oint \oint (1 - z_1)^{k_1} (1 - z_2)^{k_2}
     \mathbb{E}_{\tmmathbf{\beta}, \tmmathbf{y}} \left[ \prod_{i = 1}^2 \left(
     \sum_{j = 1}^{\infty} \dot{\tmmathbf{X}}_{j, N, M} z_i^{j - 1}
     \right)^{\eta_i} \left( \sum_{j = 1}^{\infty}
     \frac{\dot{\tmmathbf{Y}}_{j, N}}{z_i^{j + 1}} \right)^{\theta_i} \right]
  \]
  \begin{equation}
    \times \prod_{i = 1}^2 \left( \sum_{j = 1}^{\infty} \bar{\tmmathbf{X}}_{j,
    N, M} z_i^{j - 1} \right)^{k_i - m_i - \eta_i} \left( \sum_{j =
    0}^{\infty} \frac{\bar{\tmmathbf{Y}}_{j, N}}{z_i^{j + 1}} \right)^{m_i + 1
    - \theta_i} d \text{} z \text{}_1 d \text{} z_2 . \label{bnmj}
  \end{equation}
  We will show that the desired limit comes only from such contour integrals
  that correspond to $\eta_1 + \theta_1 = 1 = \eta_2 + \theta_2$. We will only
  consider the cases where $\eta_1 = \eta_2 = 1$, $\theta_1 = \theta_2 = 0$,
  and $\eta_1 = \theta_2 = 1$, $\eta_2 = \theta_1 = 0$, in order to compute
  the asymptotic contribution of (\ref{bnmj}). In the first case, (\ref{bnmj})
  gives the summand of (\ref{vbcg}) that corresponds to $\mathbf{G}_1$, and in
  the second case, (\ref{bnmj}) gives the summand of (\ref{vbcg}) that
  corresponds to $\mathbf{G}_3 (z, w^{- 1})$. Similarly, for $\eta_2 =
  \theta_1 = 1$, we get the summand of (\ref{vbcg}) that corresponds to
  $\mathbf{G}_3 (w, z^{- 1})$, and for $\theta_1 = \theta_2 = 1$, we get the
  summand of (\ref{vbcg}) that corresponds to $\mathbf{G}_2$.
  
  Let $\eta_1 = \eta_2 = 1$, $\theta_1, \theta_2 = 0$. Then, the $2^{- 1}$-CLT
  appropriateness implies that the normalized product of expectations
  converges to
  \[ \frac{(\gamma^{- 1} - 1)^{- 1}}{z^{k_1 + 1} w^{k_2 + 1}} \mathbf{G}_1 (z,
     w) (\mathbf{F}_2 (z) - 1)^{k_1 - m_1 - 1} (\mathbf{F}_2 (w) - 1)^{k_2 -
     m_2 - 1} \left( \mathbf{F}_1 \left( \frac{1}{z} \right) \right)^{m_1 + 1}
     \left( \mathbf{F}_1 \left( \frac{1}{w} \right) \right)^{m_2 + 1} . \]
  Therefore, the limit of the corresponding linear combination is
  \[ \frac{- 1}{4 \pi^2} \sum_{m_1 = 0}^{k_1 - 1} \sum_{m_2 = 0}^{k_2 - 1}
     \binom{k_1}{m_1} \binom{k_2}{m_2} \frac{(- 1)^{m_1 + m_2} (k_1 - m_1)
     (k_2 - m_2)}{(m_1 + 1) (m_2 + 1)} (\gamma^{- 1} - 1)^{k_1 + k_2 - m_1 -
     m_2 - 1} \]
  \[ \oint \oint \frac{(1 - z)^{k_1} (1 - w)^{k_2}}{z^{k_1 + 1} w^{k_2 + 1}}
     \mathbf{G}_1 (z, w) (\mathbf{F}_2 (z) - 1)^{k_1 - m_1 - 1} (\mathbf{F}_2
     (w) - 1)^{k_2 - m_2 - 1} \]
  \[ \left( \mathbf{F}_1 \left( \frac{1}{z} \right) \right)^{m_1 + 1} \left(
     \mathbf{F}_1 \left( \frac{1}{w} \right) \right)^{m_2 + 1} d \text{} z
     \text{} d \text{} w, \]
  which is equal to the term of (\ref{vbcg}) that depends on $\mathbf{G}_1$.
  
  Now, let $\eta_1 = \theta_2 = 1$, $\eta_2 = \theta_1 = 0$. Again, the $2^{-
  1}$-CLT appropriateness implies that the normalized product of expectations
  in (\ref{bnmj}) converges to
  \[ \frac{(\gamma^{- 1} - 1)^{- 1 / 2}}{z_1^{k_1 + 1} z_2^{k_2 + 1}}
     \mathbf{G}_3 (z, w^{- 1}) (\mathbf{F}_2 (z) - 1)^{k_1 - m_1 - 1}
     (\mathbf{F}_2 (w) - 1)^{k_2 - m_2} \left( \mathbf{F}_1 \left( \frac{1}{z}
     \right) \right)^{m_1 + 1} \left( \mathbf{F}_1 \left( \frac{1}{w} \right)
     \right)^{m_2}, \]
  and the limit of the corresponding linear combination is
  \[ \frac{- 1}{4 \pi^2} \sum_{m_1 = 0}^{k_1 - 1} \sum_{m_2 = 0}^{k_2}
     \binom{k_1}{m_1} \binom{k_2}{m_2} \frac{(- 1)^{m_1 + m_2} (k_1 -
     m_1)}{m_1 + 1} (\gamma^{- 1} - 1)^{k_1 + k_2 - m_1 - m_2 - 1 / 2} \]
  \[ \oint \oint \frac{(1 - z_1)^{k_1} (1 - z_2)^{k_2}}{z_1^{k_1 + 1} z_2^{k_2
     + 1}} \mathbf{G}_3 (z, w^{- 1}) (\mathbf{F}_2 (z) - 1)^{k_1 - m_1 - 1}
     (\mathbf{F}_2 (w) - 1)^{k_2 - m_2} \]
  \[ \left( \mathbf{F}_1 \left( \frac{1}{z} \right) \right)^{m_1 + 1} \left(
     \mathbf{F}_1 \left( \frac{1}{w} \right) \right)^{m_2} d \text{} z \text{}
     d \text{} w, \]
  which is equal to the term of (\ref{vbcg}) that corresponds to $\mathbf{G}_3
  (z, w^{- 1})$.
  
  To conclude the proof, we have to show that all the other terms of $f^{- 1}
  \mathcal{D}_{k_2} \mathcal{D}_{k_1} f$ and $- f^{- 2} \mathcal{D}_{k_1} f
  \cdot \mathcal{D}_{k_2} f$, either vanish as $N, M \rightarrow \infty$, or
  cancel out. This is done in detail and in the most general case, in the
  proof of Theorem \ref{normal1!}.
\end{proof}

\subsubsection{Asymptotic normality}\label{amrtli}

Now, we focus on proving asymptotic normality for $\{ N^{- (k + 1 / 2)}
(p_k^{(N, M)} -\mathbb{E} [p_k^{(N, M)}]) \}_{k \in \mathbb{N}}$, where
\[ p_k^{(N, M)} \assign \sum_{i = 1}^N (\lambda_i + N - i)^k, \]
and the random partition $(\lambda_1 \geq \cdots \geq \lambda_N)$ is
distributed according to $\overline{\rho_{\tmmathbf{\beta}, \tmmathbf{y}}}$.
This is done assuming $2^{- 1}$-CLT appropriateness for the random parameters
of the Schur measure. We will identify the Gaussian distribution by computing
the limiting moments and interpreting via the Wick's formula. We recall that
$\dot{p}_k^{(N, M)} \assign p_k^{(N, M)} -\mathbb{E} [p_k^{(N, M)}]$.

\begin{theorem}
  \label{normal1!}Assume that $\{\tmmathbf{x}_i \}_{i = 1}^M,
  \{\tmmathbf{\beta}_j \}_{j = 1}^M$ are $2^{- 1}$-CLT appropriate sequences
  of random variables. We also denote $\tmmathbf{y}_i = 1 -\tmmathbf{x}_i$,
  and we assume that the asymptotic conditions of Theorem \ref{CLT1!} are
  satisfied. Then, for any positive integers $k_1, \ldots, k_{\nu}$ we have
  \[ \lim_{\underset{N / M \rightarrow \gamma}{N, M \rightarrow \infty}}
     \frac{N^{- \nu / 2}}{{N^{k_1 + \cdots + k_{\nu}}} } \mathbb{E}
     [\dot{p}_{k_1}^{(N, M)} \ldots \dot{p}_{k_{\nu}}^{(N, M)}] = 0, \]
  if $\nu$ is odd, and
  \begin{equation}
    \lim_{\underset{N / M \rightarrow \gamma}{N, M \rightarrow \infty}}
    \frac{N^{- \nu / 2}}{{N^{k_1 + \cdots + k_{\nu}}} } \mathbb{E}
    [\dot{p}_{k_1}^{(N, M)} \ldots \dot{p}_{k_{\nu}}^{(N, M)}] = \sum_{\pi \in
    P_2 (\nu)} \prod_{(i, j) \in \pi} \mathfrak{c}_{k_i, k_j}, \label{limit!!}
  \end{equation}
  if $\nu$ is even, where $P_2 (\nu)$ is the set of all pairings of $[\nu]$,
  and $\mathfrak{c}_{k, l}$ are the same as in Theorem \ref{CLT1!}.
\end{theorem}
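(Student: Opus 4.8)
The plan is to mimic the structure of the proof of Theorem \ref{CLT1!}, but now carried out for an arbitrary number of operators, using the precise bookkeeping of Lemma \ref{Lemapent}. Starting from the Cauchy identity (\ref{CI}), I would apply $f^{-1}\mathcal{D}_{k_\nu}\cdots\mathcal{D}_{k_1}$ to both sides; by (\ref{Stirling}) this expands into a linear combination, with coefficients that are monomials in $N$, of the quantities $f^{-1}D_{l_\nu}\cdots D_{l_1}f$ for $l_i\le k_i$. Lemma \ref{Lemapent} then writes each such term as $\mathcal{F}_{l_1}\cdots\mathcal{F}_{l_\nu}$ plus the ``pairing sum'' $\sum_{\pi_1\sqcup\pi_2=[\nu]}(\prod_{(i,j)\in\pi_1}\mathcal{G}_{l_i,l_j})(\prod_{i\in\pi_2}\mathcal{F}_{l_i})$ plus a remainder $\mathcal{H}_{l_1,\dots,l_\nu}$ that is a linear combination of terms $\mathcal{F}_{l_{i_1}}\cdots\mathcal{F}_{l_{i_\lambda}}\mathcal{I}(\{\alpha_i,\gamma_i\})$ with a \emph{strict} degree deficit $\sum(\alpha_i+\gamma_i)<\sum l_i-\sum l_{i_j}$.

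Next I would expand each parameter-dependent factor via its ``dot/bar'' decomposition $\tmmathbf{X}_{j,N,M}=\dot{\tmmathbf{X}}_{j,N,M}+\bar{\tmmathbf{X}}_{j,N,M}$ and likewise for $\tmmathbf{Y}$, exactly as in (\ref{bnmj}), and take the expectation over the random environment. The crucial scaling input is that, after dividing by $N^{k_1+\cdots+k_\nu}N^{\nu/2}$, only the \emph{maximal-degree} contour integrals survive, and within those only the contributions where the random fluctuations $\dot{\tmmathbf{X}},\dot{\tmmathbf{Y}}$ pair up two at a time can contribute on the scale $N^{-\nu/2}$: each $\dot{\tmmathbf{X}}_{j,N,M}$ or $\dot{\tmmathbf{Y}}_{j,N}$ carries a factor of size $M^{-1/2}$ (by $2^{-1}$-CLT appropriateness), and by the CLT in the sense of moments a product of $2m$ such fluctuations has expectation of order $N^{-m}$ governed by Wick's formula, while a product with an odd number of fluctuations is $o$ of the relevant scale. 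Hence if $\nu$ is odd the whole expression vanishes in the limit. If $\nu$ is even, the surviving configurations are: pairs of indices $(i,j)$ matched through a $\mathcal{G}_{k_i,k_j}$-term coming from Lemma \ref{Lemapent}, and pairs matched through two $\mathcal{F}$-factors whose parameter fluctuations get Wick-contracted with each other (this is precisely the mechanism producing $\mathfrak{c}_{k_1,k_2}$ in the proof of Theorem \ref{CLT1!}). In both situations the limit contribution of a matched pair $(i,j)$ is exactly $\mathfrak{c}_{k_i,k_j}$, and summing over all perfect matchings $\pi\in P_2(\nu)$ yields (\ref{limit!!}).

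It remains to argue that every other configuration drops out. There are three types to rule out: (a) terms in which some $\mathcal{H}$-remainder appears — these are killed by the strict degree deficit in Lemma \ref{Lemapent}, since the contour-integral estimates from (\ref{asx}), (\ref{zxcv}) show that a term $\mathcal{I}(\{\alpha_i,\gamma_i\})$ contributes at most on scale $N^{\sum(\alpha_i+\gamma_i)}$; (b) terms in which a $\mathcal{G}_{k_i,k_j}$-factor appears but is multiplied by an $\mathcal{F}$-product whose fluctuations do \emph{not} pair among themselves — then an unpaired $\dot{\tmmathbf{X}}$ or $\dot{\tmmathbf{Y}}$ remains, forcing an extra $N^{-1/2}$ that is not compensated; (c) terms with more than the minimal number of parameter fluctuations for a given pair, which again lose a power of $N^{-1/2}$ per excess fluctuation. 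Carrying out this case analysis carefully — and in particular verifying that the ``bar'' parts of the factors reorganize into exactly the deterministic functions $\mathbf{F}_1,\mathbf{F}_2$ of Theorem \ref{LLN!} while the ``dot'' parts reorganize into $\mathbf{G}_1,\mathbf{G}_2,\mathbf{G}_3$ — is the main obstacle; it amounts to a bookkeeping argument that the only matchings of fluctuations surviving on scale $N^{-\nu/2}$ are the pair matchings, and that each contributes the integral $\mathfrak{c}_{k_i,k_j}$ of (\ref{vbcg}). The cancellations involving $-f^{-2}(\prod \mathcal{D}_{k_i}f)$-type cross terms, needed to center the $\dot p_k^{(N,M)}$ correctly, are handled as in the $\nu=2$ case of Theorem \ref{CLT1!}, now uniformly in $\nu$.
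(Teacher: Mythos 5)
Your overall strategy (apply $\mathcal{D}_{k_\nu}\cdots\mathcal{D}_{k_1}$ to the Cauchy identity, invoke Lemma \ref{Lemapent}, split into dot/bar parts and extract the Wick structure from the $2^{-1}$-CLT appropriateness) is indeed the paper's strategy, but two of your key claims are wrong or missing. First, you assert that in the even case the limit also receives contributions from pairs ``matched through a $\mathcal{G}_{k_i,k_j}$-term'', each contributing $\mathfrak{c}_{k_i,k_j}$. In the $2^{-1}$-CLT regime this is false: $\mathcal{G}_{k_i,k_j}$ is a double contour integral whose index sets satisfy $\alpha_1+\gamma_1+\alpha_2+\gamma_2\le k_i+k_j$, hence it is of order $N^{k_i+k_j}$ and vanishes after division by the relevant scale $N^{k_i+k_j+1}$ for that pair; the covariance $\mathfrak{c}_{k_i,k_j}$ of Theorem \ref{CLT1!} contains no $(z-w)^{-2}$ part and arises solely from Wick-contracting the environment fluctuations $\dot{\tmmathbf{X}},\dot{\tmmathbf{Y}}$ inside the product $F_{k_i}F_{k_j}$. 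The $\mathcal{G}$-pairing mechanism is what operates in the second ($1$-CLT) regime of Theorem \ref{normal2!}, where it produces the Gaussian Free Field summand. With your accounting each pairing would be produced by two distinct mechanisms, and the resulting limit would not equal $\sum_{\pi\in P_2(\nu)}\prod_{(i,j)\in\pi}\mathfrak{c}_{k_i,k_j}$.

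Second, your list (a)--(c) of configurations to discard only covers terms that are too \emph{small} (strict degree deficit, an unpaired fluctuation, or excess fluctuations). The genuinely dangerous terms are the opposite ones: summands in which some tuples $(\eta,\theta)$ equal $(0,0)$, i.e. too \emph{few} fluctuation factors remain inside the expectation, so the factor $N^{\nu/2}$ is not absorbed and the term \emph{diverges} individually. These cannot be dismissed by size estimates; the paper shows they cancel exactly against the centering terms $\mathbb{E}[(p_{\kappa_1}^{(N,M)})^{\sigma_1}\cdots](\bar p_{\kappa_1}^{(N,M)})^{\tau_1-\sigma_1}\cdots$ through an inductive argument over $\sigma_1+\cdots+\sigma_r\le\tau_1+\cdots+\tau_r$ based on identities such as $\sum_{\sigma=\tau-n}^{\tau}(-1)^{\tau-\sigma}\binom{\tau}{\sigma}\binom{\sigma}{n-(\tau-\sigma)}=0$, and then repeats the same cancellation scheme for the lower-order summands of $\mathcal{F}_k$, the summands $C\cdot D_l$ ($l<k$) of $\mathcal{D}_k$, and the $\mathcal{I}$-remainders of Lemma \ref{Lemapent}. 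Deferring this to ``as in the $\nu=2$ case of Theorem \ref{CLT1!}'' is circular, since the paper's proof of Theorem \ref{CLT1!} itself explicitly postpones these cancellations to the proof of the present theorem; this cancellation machinery is the core new content of the proof and is absent from your proposal.
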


Before we prove the theorem we want to emphasize on how the desired limit
emerges. Expressions for the left-hand side of (\ref{limit!!}) can be obtained
by applying the differential operators $\mathcal{D}_k$ (that act on $x_i$) to
$f = \prod_{j = N + 1}^M \prod_{i = 1}^N (1 - \beta_j + \beta_j x_i)$, which
gives rise to contour integrals $\mathcal{I} (\{\alpha_i, \gamma_i \}_{i =
1}^n)$. Using the formulas (\ref{asx}), (\ref{zxcv}), and differentiating, we
can write $\mathcal{I} (\{\alpha_i, \gamma_i \}_{i = 1}^n)$ in a form where
the $2^{- 1}$-CLT appropriateness can be used. This differentiation creates
powers of $N$. The maximum power of $N$ emerges when $\sum_{i = 1}^n (\alpha_i
+ \gamma_i)$ is maximized. It is reasonable to expect that the limit comes
from the terms that give the maximum power of $N$.

For $f^{- 1} \mathcal{D}_{k_{\nu}} \ldots \mathcal{D}_{k_1} f$, due to Lemma
\ref{Lemapent}, we see that such terms are given by $\mathcal{F}_{k_1} \ldots
\mathcal{F}_{k_{\nu}}$, and more specifically, they are the product of
\[ F_k \assign \frac{N^{k + 1}}{2 \pi i} \oint \sum_{m = 0}^k \binom{k}{m}
   \frac{(- 1)^m}{m + 1} \left( \frac{1}{\gamma} - 1 \right)^{k - m} \left(
   \sum_{j = 1}^{\infty} X_{j, N, M} z^{j - 1} \right)^{k - m} \left( \sum_{j
   = 0}^{\infty}  \frac{Y_{j, N}}{z^{j + 1}} \right)^{m + 1} d \text{} z, \]
for $k = k_1, \ldots, k_{\nu}$. The maximum power of $N$ is $N^{k_1 + \cdots +
k_{\nu} + \nu}$.

\begin{proposition}
  \label{CLTProp}Assume that $\{\tmmathbf{x}_i \}_{i = 1}^M,
  \{\tmmathbf{\beta}_j \}_{j = 1}^M$ are $2^{- 1}$-CLT appropriate sequences
  of random variables. We also denote $\tmmathbf{y}_i = 1 -\tmmathbf{x}_i$,
  and we assume that the asymptotic conditions of Theorem \ref{CLT1!} are
  satisfied. Then, for any positive integers $k_1, \ldots, k_{\nu}$ we have
  \[ \lim_{\underset{N / M \rightarrow \gamma}{N, M \rightarrow \infty}}
     \frac{N^{- \nu / 2}}{{N^{k_1 + \cdots + k_{\nu}}} }
     \mathbb{E}_{\tmmathbf{\beta}, \tmmathbf{y}} [\dot{F}_{k_1} \ldots
     \dot{F}_{k_{\nu}}] = 0, \]
  if $\nu$ is odd, and
  \begin{equation}
    \lim_{\underset{N / M \rightarrow \gamma}{N, M \rightarrow \infty}}
    \frac{N^{- \nu / 2}}{{N^{k_1 + \cdots + k_{\nu}}} }
    \mathbb{E}_{\tmmathbf{\beta}, \tmmathbf{y}} [\dot{F}_{k_1} \ldots
    \dot{F}_{k_{\nu}}] = \sum_{\pi \in P_2 (\nu)} \prod_{(i, j) \in \pi}
    \mathfrak{c}_{k_i, k_j}, \label{D}
  \end{equation}
  if $\nu$ is even, where $P_2 (\nu)$ is the set of all pairings of $[\nu]$,
  and $\mathfrak{c}_{k, l}$ are the same as in Theorem \ref{CLT1!}.
\end{proposition}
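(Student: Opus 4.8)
The plan is to establish the moment asymptotics for the products $\dot F_{k_1}\cdots\dot F_{k_\nu}$ directly, reducing everything to a Wick-type calculation for the centered linear statistics $\dot{\tmmathbf X}_{j,N,M}$ and $\dot{\tmmathbf Y}_{j,N}$. Recall that each $F_k$ is, up to explicit combinatorial coefficients, a contour integral of $(1-z)^k$ times powers of the two random power-series $\mathcal X(z)\assign\sum_{j\ge 1}\tmmathbf X_{j,N,M}z^{j-1}$ and $\mathcal Y(z)\assign\sum_{j\ge 0}\tmmathbf Y_{j,N}z^{-j-1}$. Writing $\tmmathbf X=\dot{\tmmathbf X}+\bar{\tmmathbf X}$, $\tmmathbf Y=\dot{\tmmathbf Y}+\bar{\tmmathbf Y}$ and expanding by the multinomial theorem, $F_k$ becomes a finite sum of terms of the form $(\text{deterministic contour integral})\times(\text{product of a fixed number of }\dot{\tmmathbf X}\text{'s and }\dot{\tmmathbf Y}\text{'s at fixed }z)$. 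The key scaling observation is that $\dot{\tmmathbf X}_{j,N,M}$ and $\dot{\tmmathbf Y}_{j,N}$ are each of order $N^{-1/2}$ in $L^2$ by $2^{-1}$-CLT appropriateness (the fluctuation of a sum of $\asymp M$ weights scaled by $M$, i.e. $M^{-1/2}$ after recentering and $N\asymp\gamma M$). Hence $\dot F_k$ itself is of order $N^{k+1}\cdot N^{-1/2}=N^{k+1/2}$, which is exactly the normalization in the statement; each additional centered factor beyond the first in a given $F_k$ costs another power $N^{-1/2}$ and is therefore negligible. This is why only the ``one centered factor'' terms survive, i.e. the terms indexed by $\eta_i+\theta_i=1$ in the notation of the proof of Theorem \ref{CLT1!}.

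First I would make this precise: expand $\mathbb E_{\tmmathbf\beta,\tmmathbf y}[\dot F_{k_1}\cdots\dot F_{k_\nu}]$ into a sum over choices, for each $i\in[\nu]$, of how many $\dot{\tmmathbf X}$'s and $\dot{\tmmathbf Y}$'s are extracted from $F_{k_i}$; call these numbers $\eta_i,\theta_i\ge 0$ (with $\eta_i+\theta_i\ge 1$ since we centered). The total contribution carries a power of $N$ equal to $(k_1+\cdots+k_\nu+\nu)$ from the prefactors, times $\mathbb E$ of a product of $\sum_i(\eta_i+\theta_i)$ centered linear statistics. By the $2^{-1}$-CLT appropriateness, this joint moment of centered Gaussian-in-the-limit variables is $O\big(N^{-\frac12\sum_i(\eta_i+\theta_i)}\big)$, and after dividing by $N^{k_1+\cdots+k_\nu+\nu/2}$ the exponent of $N$ is $\frac{\nu}{2}-\frac12\sum_i(\eta_i+\theta_i)\le 0$, with equality iff $\eta_i+\theta_i=1$ for every $i$. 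So only these terms survive, and for them the limiting joint moment factors, by the Gaussian (Wick) structure of the limit vector, into a sum over pairings $\pi\in P_2(\nu)$ of products of pair-covariances. A single pair $(i,j)$ with the four sub-cases $(\eta_i,\theta_i;\eta_j,\theta_j)\in\{(1,0;1,0),(1,0;0,1),(0,1;1,0),(0,1;0,1)\}$ produces exactly the four terms $\mathbf G_1,\mathbf G_3(z,w^{-1}),\mathbf G_3(w,z^{-1}),\mathbf G_2$ assembled in formula (\ref{vbcg}); thus the pair-covariance of this limiting Gaussian vector is precisely $\mathfrak c_{k_i,k_j}$ from Theorem \ref{CLT1!}. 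This gives (\ref{D}) for even $\nu$ and the vanishing statement for odd $\nu$ (no perfect matching exists).

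The bookkeeping that I expect to be the main obstacle is justifying that the surviving contour integrals genuinely converge to the claimed limits and that the subleading terms vanish \emph{uniformly}, not just term by term. Three points need care. (i) One must fix the radii of the $z$- and $w$-contours (as in Theorems \ref{LLN!}, \ref{CLT1!}, with $|z|=2\varepsilon$, $|w|=\varepsilon$ so that $C_z$ encircles $C_w$, reflecting the nesting $C_z\supset C_w$ forced by the $(z-w)^{-\alpha}$ singularities in Lemma \ref{Lemma3}) and verify, using Convention \ref{convention}, that all the power series $\mathbf F_1(1/z),\mathbf F_2(z),\mathbf G_\bullet$ and their random counterparts converge on these contours; this controls the interchange of expectation, summation and contour integration by dominated convergence. (ii) Beyond the products of $F_k$'s, the original moment $\mathbb E[\dot p_{k_1}^{(N,M)}\cdots\dot p_{k_\nu}^{(N,M)}]$ contains the $\mathcal G_{k_i,k_j}$ and $\mathcal H$ terms of Lemma \ref{Lemapent}; one must check these contribute at strictly lower order in $N$ after centering — here the strict inequality $\sum(\alpha_i+\gamma_i)<\sum k_i-\sum k_{i_j}$ in Lemma \ref{Lemapent}, combined with the $N^{-1/2}$ gain from each centered factor, does the job — and that the $-f^{-2}(\mathcal D_{k_1}f)(\mathcal D_{k_2}f)$ type cross-terms coming from centering the $p_k$'s cancel the $\mathcal F_{k_i}\mathcal F_{k_j}$ ``mean'' pieces, leaving only the covariance pieces. (iii) The combinatorial identity matching the four sub-cases to the four $\mathbf G$-terms in (\ref{vbcg}), including the $\sqrt{\gamma^{-1}-1}$ factors and the binomial sums over $m_1,m_2$, is a direct but lengthy computation that I would carry out once (for the pair $(1,2)$, $\nu=2$) and then invoke for a general pair. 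Granting these, the rest is the standard ``moments $\Rightarrow$ Gaussian via Wick'' argument. I would present (iii) and the vanishing of (ii) in full since they are the substance; (i) reduces to the contour choices already used earlier in the paper.
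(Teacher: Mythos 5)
Your argument is correct in substance, but it organizes the proof quite differently from the paper. The paper expands $\mathbb{E}[\dot F_{k_1}\cdots\dot F_{k_\nu}]$ (in the form $\mathbb{E}[(\dot F_{\kappa_1})^{\tau_1}\cdots(\dot F_{\kappa_r})^{\tau_r}]$) via the binomial theorem into \emph{uncentered} moments $\mathbb{E}[(F_{\kappa_1})^{\sigma_1}\cdots(F_{\kappa_r})^{\sigma_r}]$ multiplied by powers of $\bar F_{\kappa_i}$; inside those uncentered moments the terms whose tuples $(\eta,\theta)$ equal $(0,0)$ are individually divergent after the $N^{\nu/2}$ normalization, and the bulk of the paper's proof is an inductive argument with alternating binomial-coefficient identities showing that these divergent contributions cancel across the expansion. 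You avoid this machinery by centering each factor at the level of the integrand, so that every surviving term carries at least one centered linear statistic per factor; the H\"older/moment bound coming from $2^{-1}$-CLT appropriateness then gives $O(N^{(\nu-\sum_i(\eta_i+\theta_i))/2})$, only the terms with $\eta_i+\theta_i=1$ survive, and Wick's formula together with the pair computation already done in Theorem \ref{CLT1!} identifies the limit. For this proposition your route is genuinely simpler; note, however, that the paper's cancellation apparatus is not wasted, since it is reused in Theorem \ref{normal1!}, where the quantities accessible through the differential operators are unavoidably the uncentered moments. One imprecision you should repair: $\mathbb{E}[F_k]$ is \emph{not} the $(\eta,\theta)=(0,0)$ term of $F_k$ (it also contains the expectations of all terms with $\eta+\theta\geq 2$), so the parenthetical ``$\eta_i+\theta_i\geq 1$ since we centered'' is not literally true; rather $\dot F_k$ equals the sum of the terms with $\eta+\theta\geq 1$ minus a deterministic constant of order $N^{k}$, half a power of $N$ below the leading order $N^{k+1/2}$, and these constants must be carried through the product and shown negligible by the same H\"older estimate (they are). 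Finally, your point (ii) about the $\mathcal{G}_{k_i,k_j}$ and $\mathcal{H}$ terms of Lemma \ref{Lemapent} pertains to Theorem \ref{normal1!} rather than to this proposition, which is a statement purely about the $F_k$'s.
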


\begin{proof}
  We will show that the limit is given only by specific terms of
  $\mathbb{E}_{\tmmathbf{\beta}, \tmmathbf{y}} [F_{k_1} \ldots F_{k_{\nu}}]$.
  Writing $\tmmathbf{X}_{j, N, M} = \dot{\tmmathbf{X}}_{j, N, M} +
  \bar{\tmmathbf{X}}_{j, N, M}$, $\tmmathbf{Y}_{j, N} = \dot{\tmmathbf{Y}}_{j,
  N} + \bar{\tmmathbf{Y}}_{j, N}$, and applying the binomial theorem, we have
  that $\mathbb{E}_{\tmmathbf{\beta}, \tmmathbf{y}} [F_{k_1} \ldots
  F_{k_{\nu}}]$ is a linear combination of contour integrals
  \[ \frac{N^{k_1 + \cdots + k_{\nu} + \nu}}{(2 \pi \mathi)^{\nu}} \oint
     \ldots \oint \mathbb{E}_{\tmmathbf{\beta}, \tmmathbf{y}} \left[ \prod_{i
     = 1}^{\nu} \left( \left( \frac{1}{\gamma} - 1 \right) \sum_{j =
     1}^{\infty} \dot{\tmmathbf{X}}_{j, N, M} z_i^{j - 1} \right)^{\eta_i}
     \left( \sum_{j = 1}^{\infty} \frac{\dot{\tmmathbf{Y}}_{j, N}}{z_i^{j +
     1}} \right)^{\theta_i} \right] \]
  \begin{equation}
    \text{{\hspace{4em}}} \times \prod_{i = 1}^{\nu} (1 - z_i)^{k_i} \left(
    \left( \frac{1}{\gamma} - 1 \right) \sum_{j = 1}^{\infty}
    \bar{\tmmathbf{X}}_{j, N, M} z_i^{j - 1} \right)^{k_i - m_i - \eta_i}
    \left( \sum_{j = 0}^{\infty} \frac{\bar{\tmmathbf{Y}}_{j, N}}{z_i^{j + 1}}
    \right)^{m_i + 1 - \theta_i} d \text{} z \text{}_1 \ldots d \text{}
    z_{\nu} . \label{A}
  \end{equation}
  The linear combinations of (\ref{A}), where $\eta_i + \theta_i = 1$ for
  every $i$, will give the desired limit. Taking into account all the possible
  cases for $\{(\eta_i, \theta_i)\}_{i = 1}^{\nu}$, such that $\eta_i +
  \theta_i = 1$, we see that our explicit linear combination is
  \[ \frac{N^{k_1 + \cdots + k_{\nu} + \nu}}{(2 \pi \mathi)^{\nu}} \oint
     \ldots \oint \sum_{m_1 = 0}^{k_1} \ldots \sum_{m_{\nu} = 0}^{k_{\nu}}
     \left( \prod_{i = 1}^{\nu} \left( \frac{1}{\gamma} - 1 \right)^{k_i -
     m_i} \binom{k_i}{m_i} \frac{(- 1)^{m_i}}{m_i + 1} \right)  \sum_{\lambda
     = 0}^{\nu} \sum_{\{\pi (i)\}_{i = 1}^{\lambda}, \{\sigma (i)\}_{i =
     1}^{\nu - \lambda}} \]
  \[ \prod_{i = 1}^{\lambda} (k_{\pi (i)} - m_{\pi (i)})  \prod_{i = 1}^{\nu -
     \lambda} (m_{\sigma (i)} + 1) \mathbb{E}_{\tmmathbf{\beta}, \tmmathbf{y}}
     \left[ \prod_{i = 1}^{\lambda} \left( \sum_{j = 1}^{\infty}
     \dot{\tmmathbf{X}}_{j, N, M} z_{\pi (i)}^{j - 1} \right)  \prod_{i =
     1}^{\nu - \lambda} \left( \sum_{j = 1}^{\infty}
     \frac{\dot{\tmmathbf{Y}}_{j, N}}{z_{\sigma (i)}^{j + 1}} \right) \right]
  \]
  \[ \times \prod_{i = 1}^{\lambda} \left( \sum_{j = 1}^{\infty}
     \bar{\tmmathbf{X}}_{j, N, M} z_{\pi (i)}^{j - 1} \right)^{k_{\pi (i)} -
     m_{\pi (i)} - 1} \left( \sum_{j = 0}^{\infty}
     \frac{\bar{\tmmathbf{Y}}_{j, N}}{z_{\pi (i)}^{j + 1}} \right)^{m_{\pi
     (i)} + 1} \]
  \begin{equation}
    \times \prod_{i = 1}^{\nu - \lambda} \left( \sum_{j = 1}^{\infty}
    \bar{\tmmathbf{X}}_{j, N, M} z_{\sigma (i)}^{j - 1} \right)^{k_{\sigma
    (i)} - m_{\sigma (i)}} \left( \sum_{j = 0}^{\infty}
    \frac{\bar{\tmmathbf{Y}}_{j, N}}{z_{\sigma (i)}^{j + 1}}
    \right)^{m_{\sigma (i)}}  \prod_{i = 1}^{\nu} (1 - z_i)^{k_i} d \text{}
    z_1 \ldots d \text{} z_{\nu}, \label{B}
  \end{equation}
  where the last sum in the first line is taken with respect to all disjoint
  sets $\{\pi (1) < \cdots < \pi (\lambda)\}$, $\{\sigma (1) < \cdots < \sigma
  (\nu - \lambda)\}$, such that their union is equal to $[\nu]$. An elementary
  computation shows that (\ref{B}) is equal to
  \[ \frac{N^{k_1 + \cdots + k_{\nu} + \nu}}{(2 \pi \mathi)^{\nu}} \oint
     \ldots \oint \prod_{i = 1}^{\nu} (1 - z_i)^{k_i} \left( \left(
     \frac{1}{\gamma} - 1 \right) \sum_{j = 1}^{\infty} \bar{\tmmathbf{X}}_{j,
     N, M} z_i^{j - 1} - \sum_{j = 0}^{\infty} \frac{\bar{\tmmathbf{Y}}_{j,
     N}}{z_i^{j + 1}} \right)^{k_i} \]
  \begin{equation}
    \times \mathbb{E}_{\tmmathbf{\beta}, \tmmathbf{y}} \left[ \prod_{i =
    1}^{\nu} \left( \sum_{j = 1}^{\infty} \frac{\dot{\tmmathbf{Y}}_{j,
    N}}{z_i^{j + 1}} - \left( \frac{1}{\gamma} - 1 \right) \sum_{j =
    1}^{\infty} \dot{\tmmathbf{X}}_{j, N, M} z_i^{j - 1} \right) \right] d
    \text{} z_1 \ldots d \text{} z_{\nu} . \label{C}
  \end{equation}
  Due to the $2^{- 1}$-CLT appropriateness, and the computation for the
  covariance that we did in Theorem \ref{CLT1!}, we deduce that the contour
  integral (\ref{C}) multiplied by $N^{- \nu / 2} / N^{k_1 + \cdots +
  k_{\nu}}$ will converge as $N, M \rightarrow \infty$, either to $0$, for
  $\nu$ odd, or to the right hand side of (\ref{D}), for $\nu$ even.
  
  To conclude the proof, we have to show that only (\ref{C}) contributes to
  the limit. To do so, instead of $\mathbb{E}_{\tmmathbf{\beta}, \tmmathbf{y}}
  [\dot{F}_{k_1} \ldots \dot{F}_{k_{\nu}}]$, we will consider
  \[ \mathbb{E}_{\tmmathbf{\beta}, \tmmathbf{y}}
     [(\dot{F}_{\kappa_1})^{\tau_1} \ldots (\dot{F}_{\kappa_r})^{\tau_r}] =
     \sum_{\sigma_1 = 0}^{\tau_1} \ldots \sum_{\sigma_r = 0}^{\tau_r} (-
     1)^{\tau_1 - \sigma_1} \ldots (- 1)^{\tau_r - \sigma_r}
     \binom{\tau_1}{\sigma_1} \ldots \binom{\tau_r}{\sigma_r} \]
  \begin{equation}
    \times \mathbb{E}_{\tmmathbf{\beta}, \tmmathbf{y}}
    [(F_{\kappa_1})^{\sigma_1} \ldots (F_{\kappa_r})^{\sigma_r}]
    (\bar{F}_{\kappa_1})^{\tau_1 - \sigma_1} \ldots
    (\bar{F}_{\kappa_r})^{\tau_r - \sigma_r}, \label{S}
  \end{equation}
  where $\kappa_i \neq \kappa_j$ for every $i \neq j$. Using an inductive
  argument for $\sigma_1 + \cdots + \sigma_r \leq \tau_1 + \cdots + \tau_r$,
  we will show that only the analogous of (\ref{C}) contributes to the limit.
  
  Our first inductive step will be for $\sigma_1 + \cdots + \sigma_r = \tau_1
  + \cdots + \tau_r$. Our goal is to show that the linear combinations of
  \[ \frac{N^{\tau_1 (\kappa_1 + 1) + \cdots + \tau_r (\kappa_r + 1)}}{(2 \pi
     i)^{\tau_1 + \cdots + \tau_r}} \oint \ldots \oint
     \mathbb{E}_{\tmmathbf{\beta}, \tmmathbf{y}} \left[ \prod_{i = 1}^r
     \prod_{j = 1}^{\tau_i} \left( \left( \frac{1}{\gamma} - 1 \right) \sum_{l
     = 1}^{\infty} \dot{\tmmathbf{X}}_{l, N, M} z_{i, j}^{l - 1}
     \right)^{\eta_{i, j}} \left( \sum_{l = 1}^{\infty}
     \frac{\dot{\tmmathbf{Y}}_{l, N}}{z_{i, j}^{l + 1}} \right)^{\theta_{i,
     j}} \right] \]
  \begin{equation}
    \times \prod_{i = 1}^r \prod_{j = 1}^{\tau_i} \left( \left(
    \frac{1}{\gamma} - 1 \right) \sum_{l = 1}^{\infty} \bar{\tmmathbf{X}}_{l,
    N, M} z_{i, j}^{l - 1} \right)^{\kappa_i - m_{i, j} - \eta_{i, j}} \left(
    \sum_{l = 0}^{\infty} \frac{\bar{\tmmathbf{Y}}_{l, N}}{z_{i, j}^{l + 1}}
    \right)^{m_{i, j} + 1 - \theta_{i, j}} (1 - z_{i, j})^{\kappa_i} d \text{}
    z_{i, j}, \label{E}
  \end{equation}
  where $\eta_{i, j} + \theta_{i, j} \neq 1$ for some $(i, j)$, do not
  contribute to the limit. We recall that we have to multiply (\ref{E}) by
  \[ N^{(\tau_1 + \cdots + \tau_r) / 2} N^{- \tau_1 (\kappa_1 + 1) - \cdots -
     \tau_r (\kappa_r + 1)}, \]
  before we let $N, M \rightarrow \infty$. It is immediate to see that if
  $\eta_{i, j} + \theta_{i, j} > 1$, for every $i, j$, then the $2^{- 1}$-CLT
  appropriateness guarantees that (\ref{E}) will converge to zero as $N, M
  \rightarrow \infty$. The problematic case is when a lot of the tuples
  $(\eta_{i, j}, \theta_{i, j})$ are equal to $(0, 0)$. Note that this is a
  problem because in the expectation of the product in (\ref{E}), we will have
  less than $\tau_1 + \cdots + \tau_r$ factors. As a consequence, the power
  $N^{(\tau_1 + \cdots + \tau_r) / 2}$ will not be absorbed (by the $2^{-
  1}$-CLT appropriateness) and these terms will diverge. Therefore, our goal
  is to show that such terms cancel out in (\ref{S}).
  
  Let's assume that $\{ (\eta_{1, 1}, \theta_{1, 1}), \ldots, (\eta_{1, n_1},
  \theta_{1, n_1})\}, \ldots, \{(\eta_{r, 1}, \theta_{r, 1}), \ldots,
  (\eta_{r, n_r}, \theta_{r, n_r})\}$ are equal to $(0, 0)$, where $n_i \leq
  \tau_i$. All the other tuples are fixed and not equal to $(0, 0)$. Let's
  denote by $V$ the linear combination of all such terms (\ref{E}), that
  appear in $\mathbb{E}_{\tmmathbf{\beta}, \tmmathbf{y}}
  [(F_{\kappa_1})^{\tau_1} \ldots (F_{\kappa_r})^{\tau_r}]$. But, we also have
  that $V$ appears in (\ref{S}), from terms that correspond to $\sigma_i \geq
  \tau_i - n_i$. This can be done choosing all the corresponding tuples
  $(\eta, \theta)$ of $(\bar{F}_{\kappa_1})^{\tau_1 - \sigma_1} \ldots
  (\bar{F}_{\kappa_r})^{\tau_r - \sigma_r}$, to be equal to $(0, 0)$, and some
  tuples $(\eta, \theta)$ that correspond to $\mathbb{E}_{\tmmathbf{\beta},
  \tmmathbf{y}} [(F_{\kappa_1})^{\sigma_1} \ldots (F_{\kappa_r})^{\sigma_r}]$,
  to be equal to $(0, 0)$ as well. As a corollary, the total number of times
  that $V$ appears in (\ref{S}) is equal to
  \[ \sum_{\sigma_1 = \tau_1 - n_1}^{\tau_1} \ldots \sum_{\sigma_r = \tau_r -
     n_r}^{\tau_r} (- 1)^{\tau_1 - \sigma_1} \ldots (- 1)^{\tau_r - \sigma_r}
     \binom{\tau_1}{\sigma_1} \ldots \binom{\tau_r}{\sigma_r}
     \binom{\sigma_1}{n_1 - (\tau_1 - \sigma_1)} \ldots \binom{\sigma_r}{n_r -
     (\tau_r - \sigma_r)} = 0, \]
  because
  \[ \sum_{\sigma = \tau - n}^{\tau} (- 1)^{\tau - \sigma}
     \binom{\tau}{\sigma} \binom{\sigma}{n - (\tau - \sigma)} =
     \binom{\tau}{n} (- 1)^n \sum_{\sigma = 0}^n \binom{n}{\sigma} (-
     1)^{\sigma} = 0 \text{, \quad for } n \neq 0. \]
  This shows that all (\ref{E}) that are not such that $\eta_{i, j} +
  \theta_{i, j} = 1$, do not contribute to the limit.
  
  To better illustrate how our inductive argument works, and what we have to
  show at each step, we also treat in detail the second step $\sigma_1 +
  \cdots + \sigma_r = \tau_1 + \cdots + \tau_r - 1$. Without loss of
  generality, let $\sigma_1 = \tau_1 - 1$ and $\sigma_i = \tau_i$, for $i \neq
  1$. Again, we will show cancellations, under certain assumptions for the tuples
  $\{(\eta_{1, j}, \theta_{1, j})\}_{j = 1}^{\tau_1 - 1}, {\{(\eta_{2,
  j}, \theta_{2, j})\}_{j = 1}^{\tau_2}} , \ldots, \{(\eta_{r, j}, \theta_{r,
  j})\}_{j = 1}^{\tau_r}$, of $\mathbb{E}_{\tmmathbf{\beta},
  \tmmathbf{y}} [(F_{\kappa_1})^{\tau_1 - 1} (F_{\kappa_2})^{\tau_2} \ldots
  (F_{\kappa_r})^{\tau_r}]$, and $(\eta, \theta)$ (that corresponds to
  $\bar{F}_{\kappa_1}$). Of course, such tuples are obtained in the same way
  as before. Again, the problem is when some of these tuples are equal to $(0,
  0)$. Note that for the case where $(\eta, \theta) = (0, 0)$, we have shown
  that such linear combinations cancel out, by the first inductive step.
  Therefore, it suffices to consider the case where some of the other tuples
  are equal to $(0, 0)$, and $(\eta, \theta) \neq (0, 0)$ is fixed. By abuse
  of notation, assume that the tuples $\{(\eta_{1, 1}, \theta_{1, 1}), \ldots,
  (\eta_{1, n_1}, \theta_{1, n_1})\}, \ldots, \{(\eta_{r, 1}, \theta_{r, 1}),
  \ldots, (\eta_{r, n_r}, \theta_{r, n_r})\}$ are equal to $(0, 0)$, where
  $n_1 \leq \tau_1 - 1$, and $n_i \leq \tau_i$ for $i \neq 1$. All the other
  tuples are fixed and not equal to $(0, 0)$. Then, the corresponding linear
  combinations of products of analogous contour integrals (\ref{E}), also
  appear in (\ref{S}) for $\tau_1 - n_1 - 1 \leq \sigma_1 \leq \tau_1 - 1$,
  $\tau_i - n_i \leq \sigma_i$, fixing appropriately the corresponding tuples
  $(\eta, \theta)$. The total number of times that such linear combinations
  appear in (\ref{S}) is equal to
  \[ \sum_{\sigma_1 = \tau_1 - 1 - n_1}^{\tau_1 - 1} \sum_{\sigma_2 = \tau_2 -
     n_2}^{\tau_2} \ldots \sum_{\sigma_r = \tau_r - n_r}^{\tau_r} (-
     1)^{\tau_1 - \sigma_1} \ldots (- 1)^{\tau_r - \sigma_r}
     \binom{\tau_1}{\sigma_1} \ldots \binom{\tau_r}{\sigma_r} (\tau_1 -
     \sigma_1) \binom{\sigma_1}{n_1 - (\tau_1 - \sigma_1 - 1)} \]
  \[ \times \binom{\sigma_2}{n_2 - (\tau_2 - \sigma_2)} \ldots
     \binom{\sigma_r}{n_r - (\tau_r - \sigma_r)} = 0. \]
  Concluding our inductive argument, assume that for every $l < m \leq \tau_1
  + \cdots + \tau_r$, we have cancellations when tuples $(\eta, \theta)$ are
  equal to $(0, 0)$. Let $s_1, \ldots, s_r$ be arbitrary positive integers,
  such that $s_1 + \cdots + s_r = l$. We have to show cancellations for the
  summands of
  \begin{equation}
    \mathbb{E}_{\tmmathbf{\beta}, \tmmathbf{y}} [(F_{\kappa_1})^{s_1} \ldots
    (F_{\kappa_r})^{s_r}] (\bar{F}_{\kappa_1})^{\tau_1 - s_1} \ldots
    (\bar{F}_{\kappa_r})^{\tau_r - s_r}, \label{Z}
  \end{equation}
  when some of the $\tau_1 + \cdots + \tau_r$ tuples $(\eta, \theta)$ are
  equal to $(0, 0)$. Due to our inductive argument, we only have to consider
  that some of the tuples of $\mathbb{E}_{\tmmathbf{\beta}, \tmmathbf{y}}
  [(F_{\kappa_1})^{s_1} \ldots (F_{\kappa_r})^{s_r}]$ are equal to $(0, 0)$.
  The tuples of $(\bar{F}_{\kappa_i})^{\tau_i - s_i}$ are fixed, and not equal
  to $(0, 0)$. By abuse of notation, assume that only the tuples $\{(\eta_{1,
  1}, \theta_{1, 1}), \ldots, (\eta_{1, n_1}, \theta_{1, n_1})\}, \ldots,
  \{(\eta_{r, 1}, \theta_{r, 1}), \ldots, (\eta_{r, n_r}, \theta_{r, n_r})\}$
  are equal to $(0, 0)$, where $n_i \leq s_i$. Then, the corresponding linear
  combinations that we get from (\ref{Z}) also appear in (\ref{S}) for $s_i -
  n_i \leq \sigma_i \leq s_i$, and the total number of times that they appear
  is
  \[ \sum_{\sigma_1 = s_1 - n_1}^{s_1} \ldots \sum_{\sigma_r = s_r -
     n_r}^{s_r} \prod_{i = 1}^r (- 1)^{\tau_i - \sigma_i}
     \binom{\tau_i}{\sigma_i} \binom{\tau_i - \sigma_i}{\tau_i - s_i}
     \binom{\sigma_i}{n_i - (s_i - \sigma_i)} = 0, \]
  for $n_i \neq 0$. This proves our induction hypothesis and our claim.
\end{proof}

Now, we prove Theorem \ref{normal1!}.

\begin{proof}[of Theorem \ref{normal1!}]
  Instead of $\mathbb{E} [\dot{p}_{k_1}^{(N, M)} \ldots \dot{p}_{k_{\nu}}^{(N,
  M)}]$, we will consider
  \[ \mathbb{E} [(\dot{p}_{\kappa_1}^{(N, M)})^{\tau_1} \ldots
     (\dot{p}_{\kappa_r}^{(N, M)})^{\tau_r}] = \sum_{\sigma_1 = 0}^{\tau_1}
     \ldots \sum_{\sigma_r = 0}^{\tau_r} (- 1)^{\tau_1 - \sigma_1} \ldots (-
     1)^{\tau_r - \sigma_r} \binom{\tau_1}{\sigma_1} \ldots
     \binom{\tau_r}{\sigma_r} \]
  \begin{equation}
    \times \mathbb{E} [(p_{\kappa_1}^{(N, M)})^{\sigma_1} \ldots
    (p_{\kappa_r}^{(N, M)})^{\sigma_r}] (\bar{p}_{\kappa_1}^{(N, M)})^{\tau_1
    - \sigma_1} \ldots (\bar{p}_{\kappa_r}^{(N, M)})^{\tau_r - \sigma_r},
    \label{MN}
  \end{equation}
  where $\kappa_i \neq \kappa_j$, for $i \neq j$. We recall that an expression
  for such expectation can be obtained using that
  \begin{equation}
    \sum_{\lambda \in \mathbb{G}\mathbb{T}_N} \left( \prod_{j = 1}^r
    (p_{\kappa_j}^{(N, M)})^{\tau_j} \right) \rho_{\beta, y} [\lambda] = f^{-
    1} (\mathcal{D}_{\kappa_r})^{\tau_r} \ldots
    (\mathcal{D}_{\kappa_1})^{\tau_1} f, \label{X}
  \end{equation}
  where $f = \prod_{j = N + 1}^M \prod_{i = 1}^N (1 - \beta_j + \beta_j x_i)$.
  In (\ref{X}), we treat $x_i, \beta_j$ like they are deterministic (the
  operators $\mathcal{D}_k$ act on the $x_i$ variables). At the end, we will
  consider the expectation with respect to their joint distribution.
  
  We showed in Proposition \ref{CLTProp} how the higher order terms of the
  summand $(\mathcal{F}_{\kappa_1})^{\tau_1} \ldots
  (\mathcal{F}_{\kappa_r})^{\tau_r}$ of $f^{- 1}
  (\mathcal{D}_{\kappa_r})^{\tau_r} \ldots (\mathcal{D}_{\kappa_1})^{\tau_1}
  f$ give the limit. Our goal is to show that the remaining terms do not
  contribute asymptotically. In Lemma \ref{Lemapent} we showed that such
  remaining terms (of higher order) for
  \[ \mathbb{E} [(p_{\kappa_1}^{(N, M)})^{s_1} \ldots (p_{\kappa_r}^{(N,
     M)})^{s_r}] (\bar{p}_{\kappa_1}^{(N, M)})^{\tau_1 - s_1} \ldots
     (\bar{p}_{\kappa_r}^{(N, M)})^{\tau_r - s_r} \]
  have the form
  \begin{equation}
    \mathbb{E}_{\tmmathbf{\beta}, \tmmathbf{y}} [(F_{\kappa_1})^{s_1 - s'_1}
    \ldots (F_{\kappa_r})^{s_r - s'_r} \mathcal{I} (\{\alpha_i, \gamma_i \}_{i
    = 1}^{s'_1 + \cdots + s'_r})] \cdot (\mathbb{E}_{\tmmathbf{\beta},
    \tmmathbf{y}} [F_{\kappa_1}])^{\tau_1 - s_1} \ldots
    (\mathbb{E}_{\tmmathbf{\beta}, \tmmathbf{y}} [F_{\kappa_r}])^{\tau_r -
    s_r}, \label{z!}
  \end{equation}
  where
  \[ \sum_{i = 1}^{s'_1 + \cdots + s'_r} (\alpha_i + \gamma_i) \leq \sum_{i =
     1}^r s'_i \kappa_i . \]
  Again, some cancellations must be shown to avoid potentially divergent
  terms. For this, we only have to make assumptions for the tuples $(\eta,
  \theta)$ of (\ref{z!}), that correspond to the terms $F_{\kappa}$. No
  further assumptions for $\mathcal{I} (\{\alpha_i, \gamma_i \}_{i = 1}^{s'_1
  + \cdots + s'_r})$ are needed. To make it clear, assume that for all the
  tuples $(\eta, \theta)$ of (\ref{z!}), we have $\eta + \theta = 1$. The
  maximum power of (\ref{z!}) (in the sense that we explained before) will be
  at most $N^{(\tau_1 - s'_1) (\kappa_1 + 1) + \cdots + (\tau_r - s'_r)
  (\kappa_r + 1)} \cdot N^{s'_1 \kappa_1 + \cdots + s'_r \kappa_r}$. Before we
  consider $N, M \rightarrow \infty$, we have to multiply by $N^{(\tau_1 +
  \cdots + \tau_r) / 2} \cdot N^{- \tau_1 (\kappa_1 + 1) - \cdots - \tau_r
  (\kappa_r + 1)}$. Due to our assumption for the tuples, we see that at the
  end we will multiply a convergent term by
  \[ N^{(\tau_1 - s'_1) (\kappa_1 + 1) + \cdots + (\tau_r - s'_r) (\kappa_r +
     1)} N^{s'_1 \kappa_1 + \cdots + s'_r \kappa_r} N^{(s'_1 + \cdots + s'_2)
     / 2} N^{- \tau_1 (\kappa_1 + 1) - \cdots - \tau_r (\kappa_r + 1)} \]
  \[ = N^{- (s'_1 + \cdots + s'_r) / 2} . \]
  Therefore, such terms do not contribute to the limit.
  
  We have to use a similar inductive argument as before, to show cancellations
  for these higher order terms of (\ref{z!}). The procedure is the same as
  that we followed in Proposition \ref{CLTProp}. For this reason, we do not
  present it in full detail. Due to this inductive argument, we only have to
  assume that some of the tuples $(\eta, \theta)$ of
  $\mathbb{E}_{\tmmathbf{\beta}, \tmmathbf{y}} [(F_{\kappa_1})^{s_1 - s'_1}
  \ldots (F_{\kappa_r})^{s_r - s'_r} \mathcal{I} (\{\alpha_i, \gamma_i \}_{i =
  1}^{s'_1 + \cdots + s'_r})]$ are equal to $(0, 0)$, and all of the tuples of
  $(\mathbb{E}_{\tmmathbf{\beta}, \tmmathbf{y}} [F_{\kappa_1}])^{\tau_1 - s_1}
  \ldots (\mathbb{E}_{\tmmathbf{\beta}, \tmmathbf{y}} [F_{\kappa_r}])^{\tau_r
  - s_r}$ are fixed and not equal to $(0, 0)$. Assume that $\{(\eta_{1, 1},
  \theta_{1, 1}), \ldots, (\eta_{1, n_1}, \theta_{1, n_1})\}, \ldots,
  \{(\eta_{r, 1}, \theta_{r, 1}), \ldots, (\eta_{r, n_r}, \theta_{r, n_r})\}$
  are equal to $(0, 0)$, where $n_i \leq s_i - s'_i$. The terms that we want
  to show that cancel out, also appear in (\ref{MN}), for $s_i' \leq s_i - n_i
  \leq \sigma_i \leq s_i$, and the total number of times that they appear is
  equal to
  \[ \sum_{\sigma_1 = s_1 - n_1}^{s_1} \ldots \sum_{\sigma_r = s_r -
     n_r}^{s_r} \prod_{i = 1}^r (- 1)^{\tau_i - \sigma_i}
     \binom{\tau_i}{\sigma_i} \binom{\sigma_i}{s'_i} \binom{\tau_i -
     \sigma_i}{\tau_i - s_i} \binom{\sigma_i - s_i'}{n_i - (s_i - \sigma_i)} =
     0, \]
  for $n_i \neq 0$.
  
  In what we did so far, we showed cancellations when necessary for terms of
  \[ \mathbb{E} [(p_{\kappa_1}^{(N, M)})^{\sigma_1} \ldots (p_{\kappa_r}^{(N,
     M)})^{\sigma_r}] (\bar{p}_{\kappa_1}^{(N, M)})^{\tau_1 - \sigma_1} \ldots
     (\bar{p}_{\kappa_r}^{(N, M)})^{\tau_r - \sigma_r} \]
  that correspond to the maximum power or $N$. There are also other terms that
  contribute to (\ref{MN}), for $N, M$ finite. These emerge, for example, if
  we do not consider the higher order summands in the differentiation
  procedures (\ref{asx}), (\ref{zxcv}) for the rational functions. In other
  words, if we consider a different summand of $\mathcal{F}_k$ than $F_k$. We
  treat such summands in the same way as $F_k$, i.e. writing $X_{j, N, M} =
  \dot{X}_{j, N, M} + \bar{X}_{j, N, M}$, $Y_{j, N} = \dot{Y}_{j, N} +
  \bar{Y}_{j, N}$ and applying the binomial theorem, we separate the
  deterministic and the stochastic part of zero mean. For the same reasons, we
  have cancellations (if necessary) if some of the analogous tuples that
  correspond to the stochastic part are equal to $(0, 0)$. The only difference
  is that we might need much more tuples to be equal to $(0, 0)$ in order to
  obtain something divergent. Notice that lower order terms can also emerge if
  we do not consider the summand $D_k$ of $\mathcal{D}_k$ (at the beginning
  that we apply the differential operators), but summands $C \cdot D_l$, where
  $l < k$, and $C$ is a constant. Then, one can use the same arguments as
  before. This concludes the proof.
\end{proof}

\subsection{Second limit regime: Gaussian free field}

In this subsection, we study (annealed) global fluctuations around the limit
shape, considering a different limit regime for the random weights. This will
lead to Gaussian fluctuations for $m_N [\overline{\rho_{\tmmathbf{\beta},
\tmmathbf{y}}}]$, but under a different scaling that will give a different
covariance structure. Similarly to subsection \ref{hjklm}, we are based on
$\varepsilon$-CLT-appropriateness for the random weights. The main difference
of the limit regime of the current subsection, is that the leading term (which
gives the covariance) of the asymptotic expansion of $\mathbb{E}
[\dot{p}_{k_1}^{(N, M)} \ldots \dot{p}_{k_{\nu}}^{(N, M)}]$ is not given only
by $F_{k_1} \ldots F_{k_{\nu}}$. The extra term will give rise to the Gaussian
Free Field.

\subsubsection{Computation of the covariance for the second limit regime}

In the current section, we consider $1$-CLT appropriate random weights. To
recall our notation, for the random weights $\{\tmmathbf{x}_i,
\tmmathbf{\beta}_i \}_{i = 1}^M$, we consider $\tmmathbf{y}_i = 1
-\tmmathbf{x}_i$, $\tmmathbf{X}_{j, N, M} = \frac{1}{M - N} \sum_{i = N + 1}^M
\tmmathbf{\beta}_i^j$, and $\tmmathbf{Y}_{j, N} = \frac{1}{N} \sum_{i = 1}^N
\tmmathbf{y}_i^j$. For 1-CLT appropriate random weights we have the following
limiting covariance.

\begin{theorem}
  \label{gmstrs}Assume that $\{\tmmathbf{x}_i \}_{i = 1}^M,
  \{\tmmathbf{\beta}_j \}_{j = 1}^M$ are $1$-CLT appropriate sequences of
  random variables. We also denote $\tmmathbf{y}_i = 1 -\tmmathbf{x}_i$, and
  \[ \widehat{\mathbf{G}_1} (z, w) = \sum_{i, j \geq 1} \hat{\mathfrak{q}}_{i,
     j}^{(\gamma)} z^i w^j \text{, \quad} \widehat{\mathbf{G}_2} (z, w)
     \assign \sum_{i, j \geq 1} \hat{\mathfrak{p}}_{i, j}^{(\gamma)} z^i w^j
     \text{, \quad} \widehat{\mathbf{G}_3} (z, w) \assign \sum_{i, j \geq 1}
     \hat{\mathfrak{s}}_{i, j}^{(\gamma)} z^i w^j, \]
  where
  \[ \hat{\mathfrak{q}}_{i, j}^{(\gamma)} = \lim_{\underset{N / M \rightarrow
     \gamma}{N, M \rightarrow \infty}} (M - N)^2 \mathbb{E}
     [\dot{\tmmathbf{X}}_{i, N, M}   \dot{\tmmathbf{X}}_{j, N, M}] \text{,
     \quad} \hat{\mathfrak{p}}^{(\gamma)}_{i, j} = \lim_{\underset{N / M
     \rightarrow \gamma}{N, M \rightarrow \infty}} N^2 \mathbb{E}
     [\dot{\tmmathbf{Y}}_{i, N}  \dot{\tmmathbf{Y}}_{j, N}], \]
  \[ \hat{\mathfrak{s}}_{i, j}^{(\gamma)} = \lim_{\underset{N / M \rightarrow
     \gamma}{N, M \rightarrow \infty}} N (M - N) \mathbb{E}
     [\dot{\tmmathbf{X}}_{i, N, M}  \dot{\tmmathbf{Y}}_{j, N}] . \]
  Then, we have that
  \[ \hat{\mathfrak{c}}_{k_1, k_2} \assign \lim_{\underset{N / M \rightarrow
     \gamma}{N, M \rightarrow \infty}} \frac{1}{N^{k_1 + k_2}} \mathbb{E}
     [\dot{p}^{(N, M)}_{k_1} \dot{p}^{(N, M)}_{k_2}] = \]
  \[ \frac{- 1}{4 \pi^2} \oint_{|w| = \varepsilon} \oint_{|z| = 2 \varepsilon}
     \frac{1}{zw} \left( \left( \frac{1}{\gamma} - 1 \right) \left( \frac{1 -
     z}{z} \mathbf{F}_2 (z) - \frac{1 - z}{z} \right) + \frac{z - 1}{z}
     \mathbf{F}_1 \left( \frac{1}{z} \right) \right)^{k_1} \]
  \[ \times \left( \left( \frac{1}{\gamma} - 1 \right) \left( \frac{1 - w}{w}
     \mathbf{F}_2 (w) - \frac{1 - w}{w} \right) + \frac{w - 1}{w} \mathbf{F}_1
     \left( \frac{1}{w} \right) \right)^{k_2} \left( \widehat{\mathbf{G}_2}
     \left( \frac{1}{z}, \frac{1}{w} \right) - \widehat{\mathbf{G}_3} \left(
     w, \frac{1}{z} \right) \right. \]
  \begin{equation}
    - \widehat{\mathbf{G}_3} \left( z, \frac{1}{w} \right) + \left.
    \widehat{\mathbf{G}_1} (z, w) + \frac{z \text{} w}{(z - w)^2} \right) d
    \text{} z \text{} d \text{} w, \label{mnbmn}
  \end{equation}
  where the functions $\mathbf{F}_1, \mathbf{F}_2$ are the same as in Theorem
  \ref{LLN!}.
\end{theorem}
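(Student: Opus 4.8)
The plan is to follow the proof of Theorem~\ref{CLT1!} closely, with two modifications: the asymptotic input is now $1$-CLT appropriateness, and the term $\mathcal{G}_{k_1,k_2}$ of Lemma~\ref{Lemma3}, negligible in the classical scaling, here contributes at leading order and produces the Gaussian Free Field kernel $\tfrac{z\,w}{(z-w)^2}$. First I would treat $x_1,\ldots,x_N,\beta_{N+1},\ldots,\beta_M$ as deterministic; then, applying Proposition~\ref{Prop1} twice and afterwards averaging over the random weights,
\[
\mathbb{E}\bigl[\dot{p}^{(N,M)}_{k_1}\dot{p}^{(N,M)}_{k_2}\bigr]
=\mathbb{E}_{\tmmathbf{\beta},\tmmathbf{y}}\!\left[\frac{\mathcal{D}_{k_2}\mathcal{D}_{k_1}f}{f}\right]
-\mathbb{E}_{\tmmathbf{\beta},\tmmathbf{y}}\!\left[\frac{\mathcal{D}_{k_1}f}{f}\right]\mathbb{E}_{\tmmathbf{\beta},\tmmathbf{y}}\!\left[\frac{\mathcal{D}_{k_2}f}{f}\right],
\]
and by (\ref{Stirling}) it suffices to analyze $f^{-1}D_{k_2}D_{k_1}f=\mathcal{F}_{k_1}\mathcal{F}_{k_2}+\mathcal{G}_{k_1,k_2}+\mathcal{H}_{k_1,k_2}$ from Lemma~\ref{Lemma3}, together with the product $\mathbb{E}_{\tmmathbf{\beta},\tmmathbf{y}}[\mathcal{F}_{k_1}]\,\mathbb{E}_{\tmmathbf{\beta},\tmmathbf{y}}[\mathcal{F}_{k_2}]$.

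For the part $\mathbb{E}_{\tmmathbf{\beta},\tmmathbf{y}}[\mathcal{F}_{k_1}\mathcal{F}_{k_2}]-\mathbb{E}_{\tmmathbf{\beta},\tmmathbf{y}}[\mathcal{F}_{k_1}]\mathbb{E}_{\tmmathbf{\beta},\tmmathbf{y}}[\mathcal{F}_{k_2}]$ I would reproduce the computation of Theorem~\ref{CLT1!} verbatim: by (\ref{asx}) and (\ref{zxcv}) the leading summand of $\mathcal{F}_k$ is the integral $F_k$ in the variables $\tmmathbf{X}_{j,N,M},\tmmathbf{Y}_{j,N}$, which I expand by the binomial theorem after writing $\tmmathbf{X}_{j,N,M}=\dot{\tmmathbf{X}}_{j,N,M}+\bar{\tmmathbf{X}}_{j,N,M}$ and $\tmmathbf{Y}_{j,N}=\dot{\tmmathbf{Y}}_{j,N}+\bar{\tmmathbf{Y}}_{j,N}$. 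The subtraction $-\mathbb{E}_{\tmmathbf{\beta},\tmmathbf{y}}[F_{k_1}]\mathbb{E}_{\tmmathbf{\beta},\tmmathbf{y}}[F_{k_2}]$ cancels every term in which one of the two integrals is purely deterministic; as in the proof of Theorem~\ref{normal1!}, the terms carrying two or more $\dot{}$-factors inside one integral either cancel in pairs or are $o(N^{k_1+k_2})$, so that only the terms with exactly one $\dot{}$-factor in each $F_{k_i}$ survive (the bookkeeping rests on $\mathbb{E}[\dot{\tmmathbf{X}}_{i,N,M}\dot{\tmmathbf{X}}_{j,N,M}]$, $\mathbb{E}[\dot{\tmmathbf{Y}}_{i,N}\dot{\tmmathbf{Y}}_{j,N}]$, $\mathbb{E}[\dot{\tmmathbf{X}}_{i,N,M}\dot{\tmmathbf{Y}}_{j,N}]$ all being $O(N^{-2})$, which is precisely the content of $1$-CLT appropriateness). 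Passing to the limit with $N/M\to\gamma$, the four possibilities $(\eta_1,\eta_2)\in\{(1,1),(1,0),(0,1),(0,0)\}$ for these two dotted factors produce, up to the signs dictated by the binomial coefficients, the terms of (\ref{mnbmn}) carrying $\widehat{\mathbf{G}_1}(z,w)$, $\widehat{\mathbf{G}_3}(z,1/w)$, $\widehat{\mathbf{G}_3}(w,1/z)$ and $\widehat{\mathbf{G}_2}(1/z,1/w)$, the factors $\mathbf{F}_1,\mathbf{F}_2$ arising from the deterministic summands exactly as in Theorem~\ref{CLT1!}.

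The genuinely new input is $\mathbb{E}_{\tmmathbf{\beta},\tmmathbf{y}}[\mathcal{G}_{k_1,k_2}]$. Rewriting the double contour integral of Lemma~\ref{Lemma3} through (\ref{asx})--(\ref{zxcv}) and retaining the summands with the largest powers of $N$ and $M-N$, one finds that $\mathcal{G}_{k_1,k_2}$ is of order $N^{k_1+k_2}$, whereas $\mathcal{F}_{k_1}\mathcal{F}_{k_2}$ is of order $N^{k_1+k_2+2}$; this explains why $\mathcal{G}_{k_1,k_2}$ was invisible at the scale $N^{k_1+k_2+1}$ of Theorem~\ref{CLT1!}, yet enters at the scale $N^{k_1+k_2}$ of the present theorem. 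Decomposing the $\tmmathbf{\beta}$'s and $\tmmathbf{y}$'s occurring in $\mathcal{G}_{k_1,k_2}$ as $\bar{}+\dot{}$, the terms with a single $\dot{}$ vanish upon taking $\mathbb{E}_{\tmmathbf{\beta},\tmmathbf{y}}$, while those with two or more $\dot{}$'s are $O(N^{k_1+k_2-2})$; hence the leading contribution is obtained by replacing each $\tmmathbf{\beta}$- and $\tmmathbf{y}$-dependent factor by its expectation, which reduces to the deterministic one-periodic computation. That computation yields precisely the double contour integral of (\ref{mnbmn}) carrying the kernel $\tfrac{z\,w}{(z-w)^2}$ together with the two $\mathbf{F}$-factors, that is, the Gaussian Free Field covariance, in agreement with the known formula for deterministic one-periodic weights from \cite{B3}, \cite{B74}.

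It remains to discard the lower order contributions: $\mathcal{H}_{k_1,k_2}$ is a linear combination of integrals of the form (\ref{ast}) with $\alpha\ge 3$ and $\alpha_1+\gamma_1+\alpha_2+\gamma_2<k_1+k_2$, hence of strictly smaller $N$-degree and $o(N^{k_1+k_2})$; the summands of $f^{-1}\mathcal{D}_{k_2}\mathcal{D}_{k_1}f$ produced by (\ref{Stirling}) that do not occur in $f^{-1}D_{k_2}D_{k_1}f$, as well as the non-leading summands in the differentiations (\ref{asx})--(\ref{zxcv}), are treated by the inductive cancellation argument of the proof of Theorem~\ref{normal1!}. I expect the main obstacle to be carrying out the subtraction $-\mathbb{E}[p^{(N,M)}_{k_1}]\mathbb{E}[p^{(N,M)}_{k_2}]$ carefully enough to cancel all the $O(N^{k_1+k_2})$ contributions with several $\dot{}$'s in one factor, while simultaneously checking that $\mathbb{E}_{\tmmathbf{\beta},\tmmathbf{y}}[\mathcal{G}_{k_1,k_2}]$ contributes the Gaussian Free Field kernel additively, without surviving cross terms between the fluctuations of the random environment and the intrinsic $\tfrac{z\,w}{(z-w)^2}$ singularity.
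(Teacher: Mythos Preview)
Your proposal is correct and follows essentially the same approach as the paper: split $f^{-1}D_{k_2}D_{k_1}f$ via Lemma~\ref{Lemma3}, reuse the computation of Theorem~\ref{CLT1!} for the $\mathcal{F}_{k_1}\mathcal{F}_{k_2}$ part (now with the $1$-CLT scaling so that each dotted pair contributes $O(N^{-2})$), observe that $\mathcal{G}_{k_1,k_2}$ is of order $N^{k_1+k_2}$ and hence now survives to give the $\tfrac{zw}{(z-w)^2}$ kernel after replacing the random parameters by their means, and discard $\mathcal{H}_{k_1,k_2}$ and the lower-order Stirling/differentiation remainders by the cancellation mechanism of Proposition~\ref{CLTProp} and Theorem~\ref{normal1!}. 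The paper's own proof is terser but makes exactly these moves, including the final remark that non-leading summands of $\mathcal{F}_{k_1}\mathcal{F}_{k_2}$ which happen to be $O(N^{k_1+k_2})$ cancel against the corresponding pieces of $\mathbb{E}[p_{k_1}]\mathbb{E}[p_{k_2}]$---precisely the point you flag as the main obstacle.
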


\begin{proof}
  The computation of the limit is done using again the contour integral
  expressions that we get when we apply the operators $\mathcal{D}_k$ to $f =
  \prod_{i = 1}^N \prod_{j = N + 1}^M (1 - \beta_j + \beta_j x_i)$. The
  existence in the limit of the term of (\ref{mnbmn}) that does not depend on
  $(z - w)^{- 2}$ is due to the arguments of Theorem \ref{CLT1!}, namely, this
  will emerge by the limit of linear combinations of terms (\ref{bnmj}), for
  $\eta_1 + \theta_1 = \eta_2 + \theta_2 = 1$. The term that corresponds to
  $(z - w)^{- 2}$ comes from the summand $\mathcal{G}_{k_1, k_2}$ of $f^{- 1}
  D_{k_2} D_{k_1} f$. This did not contribute to the limit for $2^{- 1}$-CLT
  appropriate random weights.
  
  Due to Lemma \ref{Lemma3}, the higher order term of $\mathcal{G}_{k_1, k_2}$
  is
  \[ \frac{- N^{k_1 + k_2}}{4 \pi^2} \sum_{m = 0}^{k_1} \binom{k_1}{m} (- 1)^m
     \sum_{\mu = 0}^{k_2 - 1} \binom{k_2}{\mu} \frac{(k_2 - \mu) (- 1)^{\mu -
     1}}{\mu + 1} \left( \frac{1}{\gamma} - 1 \right)^{k_1 + k_2 - m - \mu -
     1} \oint \oint \frac{(1 - z)^{k_1} (1 - w)^{k_2}}{(z - w)^2} \]
  \[ \mathbb{E}_{\tmmathbf{\beta}, \tmmathbf{y}} \left[ \left( \sum_{j =
     1}^{\infty} \tmmathbf{X}_{j, N, M} z^{j - 1} \right)^{k_1 - m} \left(
     \sum_{j = 0}^{\infty} \frac{\tmmathbf{Y}_{j, N}}{z^{j + 1}} \right)^m
     \left( \sum_{j = 1}^{\infty} \tmmathbf{X}_{j, N, M} w^{j - 1}
     \right)^{k_2 - \mu - 1} \left( \sum_{j = 0}^{\infty}
     \frac{\tmmathbf{Y}_{j, N}}{w^{j + 1}} \right)^{\mu + 1} \right] d \text{}
     z \text{} d \text{} w. \]
  Dividing this term by $N^{k_1 + k_2}$ and letting $N, M \rightarrow \infty$,
  we get the desired limit.
  
  As we showed in Proposition \ref{CLTProp} and Theorem \ref{normal1!}, the
  potentially divergent terms of $\mathbb{E} [p_{k_1}^{(N, M)} p_{k_2}^{(N,
  M)}]$ (that emerge when we introduce the tuples $(\eta_1, \theta_1)$,
  $(\eta_2, \theta_2)$ for $F_{k_1}$, $F_{k_2}$) will cancel out with terms of
  $-\mathbb{E} [p_{k_1}^{(N, M)}] \mathbb{E} [p_{k_2}^{(N, M)}]$. This is also
  the case (as $N \rightarrow \infty$) for summands of $N^{- k_1 - k_2}
  \mathbb{E} [p_{k_1}^{(N, M)} p_{k_2}^{(N, M)}]$ that converge to non-zero
  terms, because these will emerge from summands of $\mathcal{F}_k
  \mathcal{F}_l$, which are not of higher order.
\end{proof}

\begin{remark}
  It is well known that the summand of the limiting covariance (\ref{mnbmn}),
  that involves $(z - w)^{- 2}$, gives rise to the Gaussian Free Field
  {\cite{B3}}. The remaining term arises from the fluctuations of the random
  environment.
\end{remark}

\subsubsection{Asymptotic normality for the second limit regime}

Now, we focus on showing asymptotic normality for the collection $\{ N^{- k}
(p_k^{(N, M)} -\mathbb{E} [p_k^{(N, M)}]) \}_{k \in \mathbb{N}}$, as $N, M
\rightarrow \infty$, for the case where the random weights $\tmmathbf{x}_i,
\tmmathbf{\beta}_i$ of the Schur measure are 1-CLT appropriate. We recall that
these are the random variables $p_k^{(N, M)} = \sum_{i = 1}^N  (\lambda_i + N
- i)^k$, where $(\lambda_1 \geq \cdots \geq \lambda_N) \in
\mathbb{G}\mathbb{T}_N$ is distributed according to
$\overline{\rho_{\tmmathbf{\beta}, \tmmathbf{y}}}$.

In the current case, we have a different scaling compared to the case that we
examined in \ref{amrtli}. In both of our limit regimes, the scaling of the
random environment passes on to the model. To show asymptotic normality, we
will compute the joint moments of the limiting distribution.

\begin{theorem}
  \label{normal2!}Assume that $\{\tmmathbf{x}_i \}_{i = 1}^M,
  \{\tmmathbf{\beta}_j \}_{j = 1}^M$ are $1$-CLT appropriate sequences of
  random variables. We also denote $\tmmathbf{y}_i = 1 -\tmmathbf{x}_i$, and
  we assume that the asymptotic conditions of Theorem \ref{gmstrs} are
  satisfied. Then, for any positive integers $k_1, \ldots, k_{\nu}$ we have
  \[ \lim_{\underset{N / M \rightarrow \gamma}{N, M \rightarrow \infty}}
     \frac{1}{{N^{k_1 + \cdots + k_{\nu}}} } \mathbb{E} [\dot{p}_{k_1}^{(N,
     M)} \ldots \dot{p}_{k_{\nu}}^{(N, M)}] = 0, \]
  if $\nu$ is odd, and
  \begin{equation}
    \lim_{\underset{N / M \rightarrow \gamma}{N, M \rightarrow \infty}}
    \frac{1}{{N^{k_1 + \cdots + k_{\nu}}} } \mathbb{E} [\dot{p}_{k_1}^{(N, M)}
    \ldots \dot{p}_{k_{\nu}}^{(N, M)}] = \sum_{\pi \in P_2 (\nu)} \prod_{(i,
    j) \in \pi} \hat{\mathfrak{c}}_{k_i, k_j}, \label{ktxsa}
  \end{equation}
  if $\nu$ is even, where $\hat{\mathfrak{c}}_{k, l}$ are the same as in
  Theorem \ref{gmstrs}.
\end{theorem}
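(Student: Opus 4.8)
The plan is to follow the same template as the proof of Theorem \ref{normal1!}, using moment–to–cumulant bookkeeping together with Wick's formula, but now tracking one additional mechanism that produces covariance: the term $\mathcal{G}_{k_i,k_j}$ coming from the Gaussian Free Field. As in Proposition \ref{CLTProp} and Theorem \ref{normal1!}, I would first reduce the problem to computing the asymptotics of
\[
\mathbb{E}\bigl[(\dot p_{\kappa_1}^{(N,M)})^{\tau_1}\ldots(\dot p_{\kappa_r}^{(N,M)})^{\tau_r}\bigr]
\]
with distinct $\kappa_i$, expanded as in \eqref{MN} into alternating sums of $\mathbb{E}[(p_{\kappa_1}^{(N,M)})^{\sigma_1}\ldots(p_{\kappa_r}^{(N,M)})^{\sigma_r}]\,(\bar p_{\kappa_1}^{(N,M)})^{\tau_1-\sigma_1}\ldots$. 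Each such expectation is $f^{-1}(\mathcal{D}_{\kappa_r})^{\sigma_r}\ldots(\mathcal{D}_{\kappa_1})^{\sigma_1}f$ with $x_i,\beta_j$ treated as deterministic, and by Lemma \ref{Lemapent} this decomposes into $\mathcal{F}_{k_1}\ldots\mathcal{F}_{k_\sigma}$, the sum over pairings involving products of $\mathcal{G}_{k_i,k_j}$ and remaining $\mathcal{F}$'s, and a lower-order remainder $\mathcal{H}$.

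The key difference from the classical regime is the scaling. Here we divide by $N^{k_1+\cdots+k_\nu}$ rather than $N^{-\nu/2}N^{-(k_1+\cdots+k_\nu)}$, and the point is that \emph{both} the $\mathcal{F}$-type contributions (through the $1$-CLT fluctuations $\dot{\tmmathbf X}_{j,N,M},\dot{\tmmathbf Y}_{j,N}$, which now carry the stronger scaling $(M-N)^2$, $N^2$, $N(M-N)$ in Theorem \ref{gmstrs}) and the $\mathcal{G}$-type contributions (which already carry a built-in factor $N^{k_1+k_2}$ and the $(z-w)^{-2}$ kernel, as computed in Theorem \ref{gmstrs}) survive at exactly the same order $N^{k_1+\cdots+k_\nu}$. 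So I would organize the leading-order analysis by: (i) identify which summand of $f^{-1}(\mathcal{D}_{\kappa_r})^{\sigma_r}\ldots f$ contributes — namely those that are a product over a set partition of $[\nu]$ into singletons (handled by $\mathcal{F}$) and pairs (handled either by a $\mathcal{G}_{k_i,k_j}$ or by pairing the zero-mean $\dot{\tmmathbf X},\dot{\tmmathbf Y}$ parts of two distinct $F_{k_i},F_{k_j}$); (ii) show via the $1$-CLT appropriateness that triple-or-higher joint moments of the zero-mean parts are subleading, exactly the Gaussian/Wick mechanism; (iii) show via the bound $\sum(\alpha_i+\gamma_i)<\sum k_i-\sum k_{i_j}$ in Lemma \ref{Lemapent} that $\mathcal{H}$ is of strictly lower order in $N$. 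Then each leading pair $(i,j)$ contributes precisely $\hat{\mathfrak c}_{k_i,k_j}$ of Theorem \ref{gmstrs} (which is literally the sum of the GFF piece $zw/(z-w)^2$ coming from $\mathcal{G}$ and the environment piece coming from paired $\dot{\tmmathbf X},\dot{\tmmathbf Y}$), and summing over all pairings $\pi\in P_2(\nu)$ gives \eqref{ktxsa}; the odd case vanishes because an odd number of factors cannot be fully paired.

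The main obstacle — and the part requiring the combinatorial care already rehearsed in Proposition \ref{CLTProp} — is controlling the potentially divergent terms. When several of the tuples $(\eta_{i,j},\theta_{i,j})$ (introduced by splitting $\tmmathbf X=\dot{\tmmathbf X}+\bar{\tmmathbf X}$, $\tmmathbf Y=\dot{\tmmathbf Y}+\bar{\tmmathbf Y}$ and applying the binomial theorem) are equal to $(0,0)$, a product of $F$'s loses factors in the joint expectation of the zero-mean parts, and the $1$-CLT appropriateness can no longer absorb the accompanying power of $N$; such terms must cancel. I would show, by the same inductive argument over $\sigma_1+\cdots+\sigma_r$ as in Proposition \ref{CLTProp}, that these contributions cancel in the alternating sum \eqref{MN}, using the identity $\sum_{\sigma=0}^n\binom{n}{\sigma}(-1)^\sigma=0$ for $n\neq0$. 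The only genuinely new wrinkle is that a $\mathcal{G}_{k_i,k_j}$ factor must be carried along unchanged through this cancellation bookkeeping (it contains no zero-mean $\dot{\tmmathbf X},\dot{\tmmathbf Y}$ parts, so it plays the same passive role that $\mathcal{I}(\{\alpha_i,\gamma_i\})$ played in the proof of Theorem \ref{normal1!}); once one checks that the cancellation identities still apply verbatim with $\mathcal{G}$-factors present, the rest is the routine residue-calculus identification of the surviving integrals with $\prod_{(i,j)\in\pi}\hat{\mathfrak c}_{k_i,k_j}$. Since the argument is a direct adaptation of the one given in full detail for Theorem \ref{normal1!}, I would present the differences and omit the repeated inductive computation.
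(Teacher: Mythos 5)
Your plan follows the paper's own route: the same reduction via (\ref{MN}) and Lemma \ref{Lemapent}, the same identification of the leading order as a sum over full pairings of $[\nu]$ in which each pair draws its covariance either from a $\mathcal{G}_{k_i,k_j}$-factor (the $(z-w)^{-2}$, i.e.\ Gaussian-free-field, summand of $\hat{\mathfrak{c}}_{k_i,k_j}$) or from Wick-pairing the centered parts of two $F$-factors (the environment summand), and the same inductive cancellation of the terms with tuples $(\eta,\theta)=(0,0)$; this is exactly how the paper combines the limit of (\ref{C}) with (\ref{33345}) to obtain $\sum_{\pi\in P_2(\nu)}\prod_{(i,j)\in\pi}\hat{\mathfrak{c}}_{k_i,k_j}$.

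The one genuine flaw is your justification for carrying the $\mathcal{G}$-factors through the cancellation bookkeeping ``unchanged'': you assert that $\mathcal{G}_{k_i,k_j}$ ``contains no zero-mean $\dot{\tmmathbf{X}},\dot{\tmmathbf{Y}}$ parts'' and therefore plays the same passive role as $\mathcal{I}$. That is not true: $\mathcal{G}_{k_i,k_j}$ is a random functional of the environment $\{\tmmathbf{\beta}_j,\tmmathbf{y}_i\}$, and in the $1$-CLT regime the centered sums are of order one, so splitting a $\mathcal{G}$-factor into mean plus fluctuation creates cross terms with the centered $\mathcal{F}$-factors that converge to nonzero limits and are not automatically negligible. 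What must be shown instead is, first, that keeping the full uncentered sums inside each $\mathcal{G}$-factor it concentrates to the deterministic double integral $\hat{\mathfrak{d}}_{k_i,k_j}$ (the GFF summand of $\hat{\mathfrak{c}}_{k_i,k_j}$), as in (\ref{qqq})--(\ref{0909}); and, second, that any term in which a fluctuation is extracted from a $\mathcal{G}$-factor can only survive the normalization if at least one $\mathcal{F}$-tuple equals $(0,0)$, whereupon it cancels in the alternating sum by the same inductive identity as in Proposition \ref{CLTProp} --- this is precisely the closing remark of the paper's proof, and it is the genuinely new ingredient relative to Theorem \ref{normal1!} that your passivity claim skips. (A secondary imprecision: in this regime it is not that higher joint moments of the centered sums are subleading, but that their limit is Gaussian, so Wick's formula applies; the resulting pairing structure you use is nevertheless correct.) With these two points argued rather than assumed, your proposal coincides with the paper's proof.
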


\begin{proof}
  Again, in order to show the claim we rely on (\ref{X}) and the determination
  of $f^{- 1} D_{k_{\nu}} \ldots D_{k_1} f$ that was done in Lemma
  \ref{Lemapent}. We recall that $f = \prod_{i = 1}^N \prod_{j = N + 1}^M (1 -
  \beta_j + \beta_j x_i)$. Compared to Theorem \ref{normal1!}, the main
  difference now is that some of the $\mathcal{G}_{k, l}$ related terms of
  (\ref{adfh}) will contribute to the limit. Our goal is to clarify the terms
  that give the limit on the right-hand side of (\ref{ktxsa}). The rest terms
  either will cancel out or they will vanish as $N, M \rightarrow \infty$, for
  the same reasons as in Theorem \ref{normal1!} and Proposition \ref{CLTProp}.
  
  As we showed in Theorem \ref{gmstrs}, the covariance $\hat{\mathfrak{c}}_{k,
  l}$ consists of two independent terms, where the one is related to the
  Gaussian free field. The summand of $\sum_{\pi \in P_2 (\nu)} \prod_{(i, j)
  \in \pi} \hat{\mathfrak{c}}_{k_i, k_j}$ which does not involve terms related
  to the Gaussian free field, will arise from the limit in the left-hand side
  of (\ref{ktxsa}), due to the term (\ref{C}). Of course, the formula of
  (\ref{C}) and the 1-CLT appropriateness imply that this will converge to
  zero for $\nu$ odd.
  
  Now, we focus on the terms of
  \begin{equation}
    \sum_{\underset{\pi_1 \neq \emptyset}{\pi_1 \sqcup \pi_2 = [\nu]}} \left(
    \prod_{(i, j) \in \pi_1} \mathcal{G}_{k_i, k_j} \right) \left( \prod_{i
    \in \pi_2} \mathcal{F}_{k_i} \right) \label{111}
  \end{equation}
  that contribute to the limit (\ref{ktxsa}). This sum is the same as in
  (\ref{adfh}). We recall that it is responsible (in the case where $\nu = 2$)
  for the summand of $\hat{\mathfrak{c}}_{k, l}$ that is related to the
  Gaussian free field. We are only interested in on its higher order terms.
  Moreover, for the higher order terms of $\mathcal{F}_{k_i}$, $i \in \pi_2$,
  we introduce the tuples $(\eta, \theta)$ in the same way as in Theorem
  \ref{normal1!} and Proposition \ref{CLTProp}. We also consider the case
  where for each tuple we have $\eta + \theta = 1$. Let $\pi_1, \pi_2$ of
  (\ref{111}) be fixed, such that $\pi_1 = \{(i_1, j_1), \ldots, (i_{\rho},
  j_{\rho})\}$, and $i_n < j_n$, for $n = 1, \ldots, \rho$. Then, by a simple
  computation we deduce that the corresponding higher order term for such
  $\pi_1, \pi_2$ is
  \[ \left( \frac{1}{2 \pi i} \right)^{\nu} \prod_{a = 1}^{\rho} N^{k_{i_a} +
     k_{j_a}}  \prod_{b \in [\nu] \backslash \{i_n, j_n \}_{n = 1}^{\rho}}
     N^{k_b + 1}  \oint \ldots \oint \frac{(1 - z_1)^{k_1} \ldots (1 -
     z_{\nu})^{k_{\nu}}}{\prod_{a = 1}^{\rho} (z_{i_a} - z_{j_a})^2} \]
  \[ \times \mathbb{E}_{\tmmathbf{\beta}, \tmmathbf{y}} \left[ \prod_{b \in
     [\nu] \backslash \{i_n, j_n \}_{n = 1}^{\rho}} \left( \sum_{j =
     1}^{\infty} \frac{\dot{\tmmathbf{Y}}_{j, N}}{z_b^{j + 1}} - \left(
     \frac{1}{\gamma} - 1 \right) \sum_{j = 1}^{\infty} \dot{\tmmathbf{X}}_{j,
     N, M} z_b^{j - 1} \right) \right. \text{{\hspace{9em}}} \]
  \[ \left. \times \prod_{a \in \{i_n, j_n \}_{n = 1}^{\rho}} \left( \left(
     \frac{1}{\gamma} - 1 \right) \sum_{j = 1}^{\infty} \tmmathbf{X}_{j, N, M}
     z_a^{j - 1} - \sum_{j = 0}^{\infty} \frac{\tmmathbf{Y}_{j, N}}{z_a^{j +
     1}} \right)^{k_a} \right] \]
  \begin{equation}
    \times \prod_{b \in [\nu] \backslash \{i_n, j_n \}_{n = 1}^{\rho}} \left(
    \left( \frac{1}{\gamma} - 1 \right) \sum_{j = 1}^{\infty}
    \bar{\tmmathbf{X}}_{j, N, M} z_b^{j - 1} - \sum_{j = 0}^{\infty}
    \frac{\bar{\tmmathbf{Y}}_{j, N}}{z_b^{j + 1}} \right)^{k_b} d \text{} z_1
    \ldots d \text{} z_{\nu} . \label{qqq}
  \end{equation}
  Before we consider $N, M \rightarrow \infty$, we have to multiply
  (\ref{qqq}) by $N^{- k_1 - \cdots - k_{\nu}}$. It is clear from the formula
  (\ref{qqq}) that for $\nu$ odd the limit will be equal to zero.
  
  For $\nu = 2 \mu$, as a corollary of the $1$-CLT appropriateness and the
  computation of the covariance that we did in Theorem \ref{gmstrs}, we get
  that the limit as $N, M \rightarrow \infty$, will be equal to
  \begin{equation}
    \prod_{a = 1}^{\rho} \hat{\mathfrak{d}}_{k_{i_a}, k_{j_a}}  \sum_{\{
    (\tilde{i}_1, \tilde{j}_1), \ldots, (\tilde{i}_{\mu - \rho},
    \tilde{j}_{\mu - \rho})\}} \prod_{b = 1}^{\mu - \rho} \left(
    \hat{\mathfrak{e}}_{k_{\tilde{i}_b}, k_{\tilde{j}_b}} -
    {\hat{\mathfrak{d}}_{k_{\tilde{i}_b}, k_{\tilde{j}_b}}}  \right),
    \label{0909}
  \end{equation}
  where the sum is taken with respect to all pairings of $[2 \mu] \backslash
  \{i_n, j_n \}_{n = 1}^{\rho}$, and
  \[ \hat{\mathfrak{d}}_{k, l} = \left( \frac{1}{2 \pi i} \right)^2 \oint
     \oint \frac{1}{(z - w)^2} \left( \left( \frac{1}{\gamma} - 1 \right)
     \left( \frac{1 - z}{z} \mathbf{F}_2 (z) - \frac{1 - z}{z} \right) +
     \frac{z - 1}{z} \mathbf{F}_1 \left( \frac{1}{z} \right) \right)^k \]
  \[ \times \left( \left( \frac{1}{\gamma} - 1 \right) \left( \frac{1 - w}{w}
     \mathbf{F}_2 (w) - \frac{1 - w}{w} \right) + \frac{w - 1}{w} \mathbf{F}_1
     \left( \frac{1}{w} \right) \right)^l d \text{} z \text{} d \text{} w. \]
  Considering the sum of all terms (\ref{0909}) with respect to all pairings
  $\pi_1 = \{(i_1, j_1), \ldots, (i_{\rho}, j_{\rho})\}$ of elements of $[2
  \mu]$ ($\rho$ also varies), we get that the higher order terms of
  (\ref{111}) contribute to the desired limit by
  \begin{equation}
    \sum_{\pi \in P_2 (2 \mu)} \prod_{(i, j) \in \pi} \hat{\mathfrak{c}}_{k_i,
    k_j} - \sum_{\pi \in P_2 (2 \mu)} \prod_{(i, j) \in \pi}
    (\hat{\mathfrak{c}}_{k_i, k_j} - \hat{\mathfrak{d}}_{k_i, k_j}) .
    \label{33345}
  \end{equation}
  Subtracting $\hat{\mathfrak{d}}_{k_i, k_j}$ from $\hat{\mathfrak{c}}_{k_i,
  k_j}$ we get the summand of $\hat{\mathfrak{c}}_{k_i, k_j}$ that is not
  related to the Gaussian free field, and the second sum of (\ref{33345})
  cancels out with the limit of (\ref{C}) (for $\nu = 2 \mu$). Therefore, this
  gives us the desired limit on the right-hand side of (\ref{ktxsa}).
  
  For very similar reasons as in Theorem \ref{normal1!} and Proposition
  \ref{CLTProp}, the remaining terms of $\mathbb{E} [\dot{p}_{k_1}^{(N, M)}
  \ldots \dot{p}_{k_{\nu}}^{(N, M)}]$ do not contribute to the limit. Note
  that for (\ref{111}) we can get some convergent terms (which do not
  necessarily converge to $0$), if we also introduce the analogous tuples
  $(\eta, \theta)$ for some of the terms $\mathcal{G}_{k, l}$, in order to
  absorb powers of $N$. But then, in order to get a term that does not
  converge to zero, we must have that the corresponding tuple $(\eta, \theta)$
  of at least one of the terms $\mathcal{F}_k$ in the product of (\ref{111})
  must be equal to $(0, 0)$. Therefore, such terms of $\mathbb{E}
  [\dot{p}_{k_1}^{(N, M)} \ldots \dot{p}_{k_{\nu}}^{(N, M)}]$ will cancel out
  as well.
\end{proof}

\section{The Annealed Central Limit Theorem for several levels}\label{S5}

In the current section we use the arguments of Section \ref{LLL} in order to
prove the annealed CLT for several levels. We treat both limit regimes of the
previous section. In each case, we start from standard formulas for the Schur
process $\mathbb{P}_{\beta, y}$, in order to extract the moments at several
levels, and we analyze their asymptotics in the same way as in the one-level
case.

We will denote by $f_N$ the function $f_N (x_1, \ldots, x_N) \assign \prod_{j
= N + 1}^M \prod_{i = 1}^N (1 - \beta_j + \beta_j x_i)$, where $N < M$ is a
non-negative integer. Let $N_1 < \cdots < N_r < M$ be such integers. Due to
the standard property (\ref{2+2=4}) for Schur functions and Proposition
\ref{Prop1}, expressions for the joint moments of $\{ \dot{p}_k^{(N_i, M)}
\}_{k \geq 0, i = 1, \ldots, r}$ can be obtained by the equality
\[ \sum_{\theta^{(1)}, \lambda^{(1)}, \ldots, \theta^{(M)}, \lambda^{(M)}}
   \mathbb{P}_{\beta, y} (\{ \theta^{(i)}, \lambda^{(i)} \}_{i = 1}^M)
   \prod_{j = 1}^r \left( \sum_{i = 1}^{N_j} (\lambda_i^{(N_j)} + N_j -
   i)^{k_j} \right) \]
\begin{equation}
  = f_{N_1}^{- 1} \mathcal{D}_{k_1} [f_{N_1} \cdot f_{N_2}^{- 1}
  \mathcal{D}_{k_2} [f_{N_2} \cdot f^{- 1}_{N_3} \mathcal{D}_{k_3} [\ldots
  f_{N_{r - 1}}^{- 1} \mathcal{D}_{k_{r - 1}} [f_{N_{r - 1}} f_{N_r}^{- 1}
  \mathcal{D}_{k_r} [f_{N_r}]] \ldots]]], \label{090909}
\end{equation}
where for every $i = 1, \ldots, r$, the differential operator
$\mathcal{D}_{k_i}$ acts on the variables $x_1, \ldots, x_{N_i}$.

\begin{theorem}
  \label{nzgamskl}Assume that $\{ \tmmathbf{x}_i \}_{i = 1}^M$, $\{
  \tmmathbf{\beta}_j \}_{j = 1}^M$ are $2^{- 1}$-CLT appropriate sequences of
  random variables, and we denote $\tmmathbf{y}_i = 1 -\tmmathbf{x}_i$. Let
  $N_1 = \lfloor \gamma_1 M \rfloor$ and $N_2 = \lfloor \gamma_2 M \rfloor$,
  for $0 < \gamma_1 < \gamma_2 \leq 1$. We also consider the coefficients
  \[ \tilde{\mathfrak{q}}_{i, j}^{(\gamma_1, \gamma_2)} = \lim_{M \rightarrow
     \infty} M\mathbb{E} [\dot{\tmmathbf{X}}_{i, N_1, M}
     \dot{\tmmathbf{X}}_{j, N_2, M}] \text{, \quad} \tilde{\mathfrak{p}}_{i,
     j}^{(\gamma_1, \gamma_2)} = \lim_{M \rightarrow \infty} M\mathbb{E}
     [\dot{\tmmathbf{Y}}_{i, N_1} \dot{\tmmathbf{Y}}_{j, N_2}], \]
  \[ \tilde{\mathfrak{s}}_{i, j}^{(\gamma_1, \gamma_2)} = \lim_{M \rightarrow
     \infty} M\mathbb{E} [\dot{\tmmathbf{X}}_{i, N_1, M}
     \dot{\tmmathbf{Y}}_{j, N_2}], \]
  and the analytic functions
  \[ \mathbf{G}_1^{(\gamma_2, \gamma_1)} (z, w) = \sum_{i, j \geq 1}
     \tilde{\mathfrak{q}}_{i, j}^{(\gamma_1, \gamma_2)} w^i z^j \text{, \quad}
     \mathbf{G}_2^{(\gamma_2, \gamma_1)} (z, w) = \sum_{i, j \geq 1}
     \tilde{\mathfrak{p}}_{i, j}^{(\gamma_1, \gamma_2)} w^i z^j, \]
  \[ \mathbf{G}_3^{(\gamma_2, \gamma_1)} (z, w) = \sum_{i, j \geq 1}
     \tilde{\mathfrak{s}}_{i, j}^{(\gamma_1, \gamma_2)} w^i z^j . \]
  Then, we have the following:
  \begin{enumerate}
    {\item \label{nocpids}The limiting covariance is given by
    \[ \lim_{M \rightarrow \infty} \frac{1}{M^{k + l + 1}} \mathbb{E}
       [\dot{p}_l^{(N_1, M)} \dot{p}_k^{(N_2, M)}] = \frac{- 1}{4 \pi^2}
       \oint_{| w | = \varepsilon'} \oint_{| z | = \varepsilon} \frac{1}{z
       \text{} w} \left( (1 - \gamma_1) \left( \frac{1 - w}{w} \mathbf{F}_2
       (w) - \frac{1 - w}{w} \right) \right. \]
    \[ \left. + \gamma_1 \frac{w - 1}{w} \mathbf{F}_1 \left( \frac{1}{w}
       \right) \right)^l \left( (1 - \gamma_2) \left( \frac{1 - z}{z}
       \mathbf{F}_2 (z) - \frac{1 - z}{z} \right) + \gamma_2 \frac{z - 1}{z}
       \mathbf{F}_1 \left( \frac{1}{z} \right) \right)^k \]
    \[ \times \left( (1 - \gamma_1) (1 - \gamma_2) \mathbf{G}_1^{(\gamma_2,
       \gamma_1)} (z, w) - (1 - \gamma_1) \gamma_2 \mathbf{G}_3^{(\gamma_2,
       \gamma_1)} \left( \frac{1}{z}, w \right) - \gamma_1 (1 - \gamma_2)
       \mathbf{G}_3^{(\gamma_1, \gamma_2)} \left( \frac{1}{w}, z \right)
       \right. \]
    \[ + \left. \gamma_1 \gamma_2 \mathbf{G}_2^{(\gamma_2, \gamma_1)} \left(
       \frac{1}{z}, \frac{1}{w} \right) \right) d \text{} z \text{} d \text{}
       w, \]
    where the functions $\mathbf{F}_1, \mathbf{F}_2$ are the same as in
    Theorem \ref{LLN!}.}
    
    \item \label{mgmsn}The collection of random variables
    \[ \{ M^{- k - 1 / 2} (p_k^{(\lfloor \gamma M \rfloor, M)} -\mathbb{E}
       [p_k^{(\lfloor \gamma M \rfloor, M)}]) \}_{k \geq 1, 0 < \gamma \leq 1}
    \]
    converges as $M \rightarrow \infty$, in the sense of moments to a Gaussian
    vector.
  \end{enumerate}
\end{theorem}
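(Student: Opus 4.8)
The plan is to deduce the multi-level statement from the one-level analysis of Section~\ref{LLL} via the nested operator identity (\ref{090909}). First I would apply (\ref{Stirling}) at every level to replace each $\mathcal{D}_{k_i}$ by the operators $D_l$ with $l\le k_i$, and then prove a multi-level refinement of Lemmas~\ref{Lemma3} and~\ref{Lemapent}: the nested expression
\[ f_{N_1}^{-1}D_{l_1}\big[f_{N_1}\,f_{N_2}^{-1}D_{l_2}\big[f_{N_2}\cdots f_{N_r}^{-1}D_{l_r}[f_{N_r}]\cdots\big]\big] \]
is a linear combination of contour integrals whose unique top-order summand, in powers of $M$, is the product $\mathcal{F}_{k_1}^{(N_1)}\cdots\mathcal{F}_{k_r}^{(N_r)}$ of single-level operators (one per level), obtained when $l_i=k_i$ and every $D_{k_i}$ acts only on $f_{N_i}$. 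Every other summand is of strictly lower order: the within-level $\mathcal{G}$- and $\mathcal{H}$-type terms of Lemma~\ref{Lemapent}, together with new cross-level terms arising when some $D_{k_i}$ differentiates a rational factor manufactured at a deeper level $N_j$, $j>i$ (double contour integrals with a $(z_i-z_j)^{-2}$ factor, $z_i$ on the level-$N_i$ contour and $z_j$ on the level-$N_j$ one). With $N_i=\lfloor\gamma_i M\rfloor$, the leading product has, in the sense of Subsection~\ref{amrtli}, leading piece $\prod_a F_{k_a}^{(N_a)}$, where $F_k^{(N_a)}$ is the quantity $F_k$ of Subsection~\ref{amrtli} with $(M-N)/N$ replaced by $1/\gamma_a-1$; thus, for the purpose of computing limiting joint moments, $\dot p_{k_a}^{(N_a,M)}$ may be replaced by $\dot F_{k_a}^{(N_a)}$.

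For part~\ref{nocpids} I would take $r=2$ and compute $M^{-(k+l+1)}\mathbb{E}\big[\dot p_l^{(N_1,M)}\dot p_k^{(N_2,M)}\big]$, which by the above equals, up to vanishing terms, $M^{-(k+l+1)}\operatorname{Cov}\big(F_l^{(N_1)},F_k^{(N_2)}\big)$. Writing $\tmmathbf{X}_{j,N_i,M}=\dot{\tmmathbf{X}}_{j,N_i,M}+\bar{\tmmathbf{X}}_{j,N_i,M}$ and $\tmmathbf{Y}_{j,N_i}=\dot{\tmmathbf{Y}}_{j,N_i}+\bar{\tmmathbf{Y}}_{j,N_i}$ and expanding by the binomial theorem exactly as in the proof of Theorem~\ref{CLT1!}: the contributions in which each of $F_l^{(N_1)}$ and $F_k^{(N_2)}$ retains exactly one dotted factor survive; those with no dotted factor rebuild $\mathbb{E}[p_l^{(N_1,M)}]\,\mathbb{E}[p_k^{(N_2,M)}]$ and cancel; those with two or more dotted factors carry fewer powers of $M$ and disappear after rescaling (using that in the $2^{-1}$ regime $\dot{\tmmathbf{X}}_{j,N,M},\dot{\tmmathbf{Y}}_{j,N}$ are of order $M^{-1/2}$). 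The $2^{-1}$-CLT appropriateness then identifies the limit of the surviving double contour integral with the claimed formula: $\tilde{\mathfrak{q}},\tilde{\mathfrak{p}},\tilde{\mathfrak{s}}$ enter through their generating series $\mathbf{G}_1^{(\gamma_2,\gamma_1)},\mathbf{G}_2^{(\gamma_2,\gamma_1)},\mathbf{G}_3^{(\gamma_2,\gamma_1)}$, while the prefactors $N_i^{l+1},N_i^{k+1}$ contribute the powers $\gamma_i^{l+1},\gamma_i^{k+1}$, which combine with the factors $(\gamma_i^{-1}-1)^{\bullet}$ inside $F_l^{(N_1)},F_k^{(N_2)}$ to form the $(1-\gamma_i)(\cdots)+\gamma_i(\cdots)$ combinations of the statement. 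This is the computation of Theorem~\ref{CLT1!}, with the cross-level $(z-w)^{-2}$ term now being of lower order (in contrast to the second regime).

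For part~\ref{mgmsn} I would compute, for arbitrarily chosen pairs $(k_a,\gamma_a)$, $a=1,\dots,\nu$ (repetitions allowed), the joint moments $\mathbb{E}\big[\prod_a\dot p_{k_a}^{(N_{i_a},M)}\big]$, $N_{i_a}=\lfloor\gamma_a M\rfloor$, via the $r$-fold identity (\ref{090909}) and the structural lemma above, whose leading piece is $\prod_a F_{k_a}^{(N_{i_a})}$. Expanding each $F$ into its dotted and barred parts and repeating verbatim the binomial-cancellation induction of Proposition~\ref{CLTProp} and Theorem~\ref{normal1!}, now organized level by level with the $\bar F$'s absorbed into the centering as in (\ref{S})--(\ref{MN}), one shows that only the configuration in which every $F$ keeps exactly one dotted factor contributes; the potentially divergent configurations (some $F$ with no dotted factor) cancel through $\sum_{\sigma=0}^{n}(-1)^\sigma\binom{n}{\sigma}=0$ for $n\neq0$, and the cross-level $\mathcal{G}$-, $\mathcal{H}$-type terms and the lower-order summands of the operators are discarded as in Subsection~\ref{amrtli}. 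The $2^{-1}$-CLT appropriateness, supplying a Gaussian (hence Wick-governed) limit for the environment vector, then forces the rescaled joint moments to vanish for an odd number of factors and, for an even number, to equal the sum over pairings in $P_2(\nu)$ of the products of pairwise covariances — the covariance of a same-level pair being that of Theorem~\ref{CLT1!}, of a different-level pair that of part~\ref{nocpids}. By the method of moments this is exactly the asserted convergence of the collection to a Gaussian vector.

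The main obstacle is the structural lemma of the first paragraph: making the decomposition of the nested operator expression precise and, above all, controlling the order in $M$ of the cross-level terms created when an operator $D_{k_i}$ differentiates a factor produced at a strictly deeper level $N_j$. This is the genuine generalization of Lemmas~\ref{Lemma3}--\ref{Lemapent}; once those cross-level $\mathcal{G}$- and $\mathcal{H}$-type contributions are seen to be $o$ of the leading product $\prod_a\mathcal{F}_{k_a}^{(N_a)}$ under the $\sqrt{M}$ scaling, both parts reduce to bookkeeping already carried out in the one-level case. A secondary technical point is to keep all contours simultaneously nested and inside the common domain of convergence of the relevant power series across all levels, which is guaranteed by Convention~\ref{convention}.
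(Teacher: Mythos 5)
Your proposal is correct and follows essentially the same route as the paper: reduce via the nested identity (\ref{090909}) to multi-level contour integrals, identify $\mathcal{F}_{k_1}^{(N_1)}\cdots\mathcal{F}_{k_r}^{(N_r)}$ as the unique top-order term in $M$ (with cross-level $(z_i-z_j)^{-\alpha}$ terms subleading in the $\sqrt{M}$ regime), and then rerun the dotted/barred binomial expansion and cancellation induction of Theorem \ref{CLT1!}, Proposition \ref{CLTProp} and Theorem \ref{normal1!} to extract the covariance and the Wick structure. The "structural lemma" you flag as the main obstacle is treated in the paper at the same level of detail you sketch, namely as a direct multi-level adaptation of Lemmas \ref{Lemma3}--\ref{Lemapent}.
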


\begin{proof}
  For the proof of \ref{nocpids}, to get an expression for $\mathbb{E}
  [\dot{p}_l^{(N_1, M)} \dot{p}_k^{(N_2, M)}]$ we use (\ref{090909}) for $\nu
  = 2$. We have already shown that $f_{N_1}^{- 1} \mathcal{D}_l [f_{N_1} \cdot
  f_{N_2}^{- 1} \mathcal{D}_k [f_{N_2}]]$ can be written as an explicit linear
  combination of double contour integrals
  \[ \sum_{J_2 \subseteq [N_1, M] : | J_2 | = \alpha_2} \sum_{I_2 \subseteq
     [N_1] : | I_2 | = \gamma_2} \sum_{J_1 \subseteq [N_2, M] \of | J_1 | =
     \alpha_1} \sum_{I_1 \subseteq [N_2] \of | I_1 | = \gamma_1} \oint \oint
     \frac{(1 - w)^l (1 - z)^k}{(z - w)^{\alpha}} \]
  \begin{equation}
    \prod_{n \in J_1} \frac{\beta_n}{1 - \beta_n z} \cdot \prod_{m \in I_1}
    \frac{1}{z - y_m} \cdot \prod_{n \in J_2} \frac{\beta_n}{1 - \beta_n w}
    \cdot \prod_{m \in I_2} \frac{1}{w - y_m} d \text{} z \text{} d \text{} w,
    \label{mcwhel}
  \end{equation}
  where $\alpha_2 + \gamma_2 + \alpha_1 + \gamma_1 \leq k + l + 2$. The term
  of the linear combination that corresponds to the highest power of $M$ is
  $\mathcal{F}_l^{(N_1)} \cdot \mathcal{F}_k^{(N_2)}$, where
  $\mathcal{F}_l^{(N_1)}$, $\mathcal{F}_k^{(N_2)}$ are the same as
  $\mathcal{F}_l$, $\mathcal{F}_k$ (see in (\ref{vfsgmw})), if we replace $N$
  by $N_1$, $N_2$ respectively. For the same reasons as in Theorem
  \ref{CLT1!}, only this double contour integral contributes to the desired
  limit, and gives the formula that we claimed.
  
  Let $0 < \gamma_1 \leq \cdots \leq \gamma_r \leq 1$, and $N_i = \lfloor
  \gamma_i M \rfloor$. In order to prove \ref{mgmsn}, using similar arguments
  as in Lemma \ref{Lemapent}, we see that the expectations $\mathbb{E}
  [\dot{p}_{k_1}^{(N_1, M)} \ldots \dot{p}_{k_r}^{(N_r, M)}]$ are linear
  combinations of $r$-fold contour integrals, analogous to (\ref{mcwhel}). The
  contour integral that corresponds to the highest power of $ M$ is
  $\mathcal{F}_{k_1}^{(N_1)} \cdot \ldots \cdot \mathcal{F}_{k_r}^{(N_r)}$,
  and more specifically its summand
  \[ \frac{1}{(2 \pi i)^r} \oint \ldots \oint \prod_{n = 1}^r (1 - z_n)^{k_{r
     - n + 1}} \left( (\gamma_{r - n + 1} - 1) \sum_{j = 1}^{\infty}
     \bar{\tmmathbf{X}}_{j, N_{r - n + 1}, M} z_n^{j - 1} - \gamma_{r - n + 1}
     \sum_{j = 0}^{\infty} \frac{\bar{\tmmathbf{Y}}_{j, N_{r - n + 1}}}{z_n^{j
     + 1}} \right)^{k_{r - n + 1}} \]
  \[ \times \mathbb{E}_{\tmmathbf{\beta}, \tmmathbf{y}} \left[ \prod_{n =
     1}^{\tmxspace r} \left( \gamma_{r - n + 1} \sum_{j = 1}^{\infty}
     \frac{\dot{\tmmathbf{Y}}_{j, N_{r - n + 1}}}{z_n^{j + 1}} - (\gamma_{r -
     n + 1} - 1) \sum_{j = 1}^{\infty} \dot{\tmmathbf{X}}_{j, N_{r - n + 1},
     M} z_n^{j - 1} \right) \right] d \text{} z_1 \ldots d \text{} z_r . \]
  Our CLT-appropriateness condition implies that the limit of the normalized
  contour integral will converge to the joint moment of the Gaussian vector
  that we claimed. The rest $\nu$-fold contour integrals of the linear
  combination do not contribute to the limit for the same reasons as in
  Theorem \ref{normal1!} and Proposition \ref{CLTProp}.
\end{proof}

For particular examples we compute the covariance of the previous theorem.

\begin{example}
  \label{ex21}For the case where the random vectors $\{ (\tmmathbf{x}_j,
  \tmmathbf{\beta}_j) \}_{j = 1}^M$ of Theorem \ref{nzgamskl} are i.i.d., the
  limiting covariance is
  \[ \lim_{M \rightarrow \infty} \frac{1}{M^{k + l + 1}} \mathbb{E}
     [\dot{p}_l^{(N_1, M)} \dot{p}_k^{(N_2, M)}] = \frac{- 1}{4 \pi^2}
     \oint_{| w | = \varepsilon'} \oint_{| z | = \varepsilon} \frac{1}{z
     \text{} w} \left( (1 - \gamma_2) \mathbb{E} \left[
     \frac{\tmmathbf{\beta}_1 -\tmmathbf{\beta}_1 z}{1 -\tmmathbf{\beta}_1 z}
     \right] + \gamma_2 \mathbb{E} \left[ \frac{z - 1}{z -\tmmathbf{y}_1}
     \right] \right)^k \]
  \[ \times \left( (1 - \gamma_1) \mathbb{E} \left[ \frac{\tmmathbf{\beta}_1
     -\tmmathbf{\beta}_1 w}{1 -\tmmathbf{\beta}_1 w} \right] + \gamma_1
     \mathbb{E} \left[ \frac{w - 1}{w -\tmmathbf{y}_1} \right] \right)^l
     \left( (1 - \gamma_2) \tmop{Cov} \left( \frac{\tmmathbf{\beta}_1 z}{1
     -\tmmathbf{\beta}_1 z}, \frac{\tmmathbf{\beta}_1 w}{1 -\tmmathbf{\beta}_1
     w} \right) \right. \]
  \[ \left. + \gamma_1 \tmop{Cov} \left( \frac{\tmmathbf{y}_1}{z
     -\tmmathbf{y}_1}, \frac{\tmmathbf{y}_1}{w -\tmmathbf{y}_1} \right) +
     (\gamma_1 - \gamma_2) \tmop{Cov} \left( \frac{\tmmathbf{y}_1}{z
     -\tmmathbf{y}_1}, \frac{\tmmathbf{\beta}_1 w}{1 -\tmmathbf{\beta}_1 w}
     \right) \right) d \text{} z \text{} d \text{} w. \]
  The covariance structure arises from a (deterministic pushforward of a)
  forward one-dimensional Brownian motion, that corresponds to the term with
  factor $\gamma_1$, and a backward (time-reversed) one-dimensional Brownian
  motion, that corresponds to the term with factor $1 - \gamma_2$. The term
  with factor $\gamma_1 - \gamma_2 < 0$, emerges from their interaction. As
  the above formula indicates, in the case that the two stochastic processes
  are independent (equivalently, $\tmmathbf{x}_1, \tmmathbf{\beta}_1$ are
  independent), this term vanishes.
\end{example}

\begin{example}
  \label{ex22}Notice that Theorem \ref{nzgamskl} can be applied when the
  random edge weights are given by a nice Markov chain, such that the classical Markov chain CLT holds in the sense of moments (see {\cite{B99}}). If
  $(\tmmathbf{x}_j, \tmmathbf{\beta}_j)$ is a $V$-uniformly ergodic Markov
  chain, such that the distribution of $(\tmmathbf{x}_1, \tmmathbf{\beta}_1)$
  is the stationary distribution, the limiting covariance is
  \[ \lim_{M \rightarrow \infty} \frac{1}{M^{k + l + 1}} \mathbb{E}
     [\dot{p}_l^{(N_1, M)} \dot{p}_k^{(N_2, M)}] = \frac{- 1}{4 \pi^2}
     \oint_{| w | = \varepsilon'} \oint_{| z | = \varepsilon} \frac{1}{z
     \text{} w} \left( (1 - \gamma_2) \mathbb{E} \left[
     \frac{\tmmathbf{\beta}_1 -\tmmathbf{\beta}_1 z}{1 -\tmmathbf{\beta}_1 z}
     \right] + \gamma_2 \mathbb{E} \left[ \frac{z - 1}{z -\tmmathbf{y}_1}
     \right] \right)^k \]
  \[ \times \left( (1 - \gamma_1) \mathbb{E} \left[ \frac{\tmmathbf{\beta}_1
     -\tmmathbf{\beta}_1 w}{1 -\tmmathbf{\beta}_1 w} \right] + \gamma_1
     \mathbb{E} \left[ \frac{w - 1}{w -\tmmathbf{y}_1} \right] \right)^l ((1 -
     \gamma_2) \tmmathbf{\Phi} (z, w) + \gamma_1 \tmmathbf{\Psi} (z, w) +
     (\gamma_1 - \gamma_2) \tmmathbf{\Xi} (z, w)) d \text{} z \text{} d
     \text{} w, \]
  where
  \[ \tmmathbf{\Phi} (z, w) = \sum_{m = 1}^{\infty} \left( \tmop{Cov} \left(
     \frac{\tmmathbf{\beta}_1 z}{1 -\tmmathbf{\beta}_1 z},
     \frac{\tmmathbf{\beta}_{m + 1} w}{1 -\tmmathbf{\beta}_{m + 1} w} \right)
     + \tmop{Cov} \left( \frac{\tmmathbf{\beta}_1 w}{1 -\tmmathbf{\beta}_1 w},
     \frac{\tmmathbf{\beta}_{m + 1} z}{1 -\tmmathbf{\beta}_{m + 1} z} \right)
     \right) \text{{\hspace{7em}}} \]
  \[ \text{{\hspace{27em}}} + \tmop{Cov} \left( \frac{\tmmathbf{\beta}_1 z}{1
     -\tmmathbf{\beta}_1 z}, \frac{\tmmathbf{\beta}_1 w}{1 -\tmmathbf{\beta}_1
     w} \right), \]
  \[ \tmmathbf{\Psi} (z, w) = \sum_{m = 1}^{\infty} \left( \tmop{Cov} \left(
     \frac{\tmmathbf{y}_1}{z -\tmmathbf{y}_1}, \frac{\tmmathbf{y}_{m + 1}}{w
     -\tmmathbf{y}_{m + 1}} \right) + \tmop{Cov} \left(
     \frac{\tmmathbf{y}_1}{w -\tmmathbf{y}_1}, \frac{\tmmathbf{y}_{m + 1}}{z
     -\tmmathbf{y}_{m + 1}} \right) \right) + \tmop{Cov} \left(
     \frac{\tmmathbf{y}_1}{z -\tmmathbf{y}_1}, \frac{\tmmathbf{y}_1}{w
     -\tmmathbf{y}_1} \right), \]
  \[ \tmmathbf{\Xi} (z, w) = \sum_{m = 1}^{\infty} \left( \tmop{Cov} \left(
     \frac{\tmmathbf{y}_1}{z -\tmmathbf{y}_1}, \frac{\tmmathbf{\beta}_{m + 1}
     w}{1 -\tmmathbf{\beta}_{m + 1} w} \right) + \tmop{Cov} \left(
     \frac{\tmmathbf{\beta}_1 w}{1 -\tmmathbf{\beta}_1 w},
     \frac{\tmmathbf{y}_{m + 1}}{z -\tmmathbf{y}_{m + 1}} \right) \right) +
     \tmop{Cov} \left( \frac{\tmmathbf{\beta}_1 w}{1 -\tmmathbf{\beta}_1 w},
     \frac{\tmmathbf{y}_1}{z -\tmmathbf{y}_1} \right) . \]
  Such a covariance structure is interpreted in the same way as in Example
  \ref{ex21}.
\end{example}

In a similar way, we obtain a multilevel version of Theorem \ref{gmstrs}.

\begin{theorem}
  \label{alloena}Assume that $\{ \tmmathbf{x}_i \}_{i = 1}^M$, $\{
  \tmmathbf{\beta}_j \}_{j = 1}^M$ are $1$-CLT appropriate sequences of random
  variables, and we denote $\tmmathbf{y}_i = 1 -\tmmathbf{x}_i$. Let $N_1 =
  \lfloor \gamma_1 M \rfloor$ and $N_2 = \lfloor \gamma_2 M \rfloor$, for $0 <
  \gamma_1 \leq \gamma_2 \leq 1$. We also consider the coefficients
  \[ \breve{\mathfrak{q}}_{i, j}^{(\gamma_1, \gamma_2)} = \lim_{M \rightarrow
     \infty} M^2 \mathbb{E} [\dot{\tmmathbf{X}}_{i, N_1, M}
     \dot{\tmmathbf{X}}_{j, N_2, M}] \text{, \quad} \breve{\mathfrak{p}}_{i,
     j}^{(\gamma_1, \gamma_2)} = \lim_{M \rightarrow \infty} M^2 \mathbb{E}
     [\dot{\tmmathbf{Y}}_{i, N_1} \dot{\tmmathbf{Y}}_{j, N_2}], \]
  \[ \breve{\mathfrak{s}}_{i, j}^{(\gamma_1, \gamma_2)} = \lim_{M \rightarrow
     \infty} M^2 \mathbb{E} [\dot{\tmmathbf{X}}_{i, N_1, M}
     \dot{\tmmathbf{Y}}_{j, N_2}], \]
  and the analytic functions
  \[ \breve{\mathbf{G}}_1^{(\gamma_2, \gamma_1)} (z, w) = \sum_{i, j \geq 1}
     \breve{\mathfrak{q}}_{i, j}^{(\gamma_1, \gamma_2)} z^j w^i \text{, \quad}
     \breve{\mathbf{G}}_2^{(\gamma_2, \gamma_1)} (z, w) = \sum_{i, j \geq 1}
     \breve{\mathfrak{p}}_{i, j}^{(\gamma_1, \gamma_2)} z^j w^i, \]
  \[ \breve{\mathbf{G}}_3^{(\gamma_2, \gamma_1)} (z, w) = \sum_{i, j \geq 1}
     \breve{\mathfrak{s}}_{i, j}^{(\gamma_1, \gamma_2)} z^j w^i . \]
  Then, we have the following:
  \begin{itemize}
    {\item The limiting covariance is given by
    \[ \lim_{M \rightarrow \infty} \frac{1}{M^{k + l}} \mathbb{E}
       [\dot{p}_l^{(N_1, M)} \dot{p}_k^{(N_2, M)}] = \frac{- 1}{4 \pi^2}
       \oint_{| w | = \varepsilon} \oint_{| z | = 2 \varepsilon} \frac{1}{z
       \text{} w} \left( (1 - \gamma_1) \left( \frac{1 - w}{w} \mathbf{F}_2
       (w) - \frac{1 - w}{w} \right) \right. \]
    \[ \left. + \gamma_1 \frac{w - 1}{w} \mathbf{F}_1 \left( \frac{1}{w}
       \right) \right)^l \left( (1 - \gamma_2) \left( \frac{1 - z}{z}
       \mathbf{F}_2 (z) - \frac{1 - z}{z} \right) + \gamma_2 \frac{z - 1}{z}
       \mathbf{F}_1 \left( \frac{1}{z} \right) \right)^k \]
    \[ \times \left( \gamma_1 \gamma_2  \breve{\mathbf{G}}_2^{(\gamma_2,
       \gamma_1)} \left( \frac{1}{z}, \frac{1}{w} \right) - \right. \gamma_2
       (1 - \gamma_1) \breve{\mathbf{G}}_3^{(\gamma_2, \gamma_1)} \left(
       \frac{1}{z}, w \right) - \gamma_1 (1 - \gamma_2)
       \breve{\mathbf{G}}_3^{(\gamma_1, \gamma_2)} \left( \frac{1}{w}, z
       \right) \]
    \[ \left. + (1 - \gamma_1) (1 - \gamma_2) \breve{\mathbf{G}}_1^{(\gamma_2,
       \gamma_1)} (z, w) + \frac{z \text{} w}{(z - w)^2} \right) d \text{} z
       \text{} d \text{} w, \]
    where the functions $\mathbf{F}_1, \mathbf{F}_2$ are the same as in
    Theorem \ref{LLN!}.}
    
    \item The collection of random variables
    \[ \{ M^{- k} (p_k^{(\lfloor \gamma M \rfloor, M)} -\mathbb{E}
       [p_k^{(\lfloor \gamma M \rfloor, M)}]) \}_{k \geq 1, 0 < \gamma \leq 1}
    \]
    converges as $M \rightarrow \infty$, in the sense of moments to a Gaussian
    vector.
  \end{itemize}
\end{theorem}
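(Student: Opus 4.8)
The plan is to follow the strategy of the proofs of Theorems~\ref{gmstrs} and \ref{nzgamskl}, with the one-level expression replaced by the nested formula (\ref{090909}). For the covariance I would apply (\ref{090909}) with $r = 2$, writing $\mathbb{E}[\dot{p}_l^{(N_1, M)} \dot{p}_k^{(N_2, M)}]$ as $f_{N_1}^{-1}\mathcal{D}_l[f_{N_1}\cdot f_{N_2}^{-1}\mathcal{D}_k[f_{N_2}]]$ with $x_1,\ldots,x_N$ treated as deterministic and integrating over their joint law at the end. By (\ref{Stirling}) and the argument of Lemma~\ref{Lemma3}, this is an explicit linear combination of the double contour integrals (\ref{mcwhel}) with $\alpha_2+\gamma_2+\alpha_1+\gamma_1 \le k+l+2$. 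As in Theorem~\ref{nzgamskl}, the summand of top degree $M^{k+l}$ splits into $\mathcal{F}_l^{(N_1)}\mathcal{F}_k^{(N_2)}$ plus a $\mathcal{G}$-type summand (the $\alpha = 2$ contribution arising when $\mathcal{D}_l$ differentiates the symmetric factor $g_m(z;\vec x)$ produced by $\mathcal{D}_k$). The $\mathcal{F}$-part is handled exactly as in Theorems~\ref{CLT1!} and \ref{nzgamskl}, but now under $1$-CLT appropriateness, so that after dividing by $M^{k+l}$ it converges to the three terms carrying $\breve{\mathbf{G}}_1,\breve{\mathbf{G}}_2,\breve{\mathbf{G}}_3$; the new feature, precisely as in Theorem~\ref{gmstrs}, is that the $\mathcal{G}$-type summand (which vanished in the $\sqrt{M}$ regime) now survives and produces the term $zw/(z-w)^2$, the Gaussian free field contribution. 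I would check that the contour nesting ($|z| = 2\varepsilon$ encircling $|w| = \varepsilon$) is compatible with the residue manipulations of Lemma~\ref{Lemma3}, and that all lower-degree integrals in the linear combination are $o(M^{k+l})$ by the degree bound together with the $1$-CLT appropriateness, exactly as in Theorem~\ref{CLT1!}.

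For the Gaussianity claim I would compute the joint moments $\mathbb{E}[\dot{p}_{k_1}^{(N_1,M)}\cdots \dot{p}_{k_\nu}^{(N_\nu,M)}]$ for $0 < \gamma_1 \le \cdots \le \gamma_\nu \le 1$, $N_i = \lfloor \gamma_i M\rfloor$, using (\ref{090909}) for general $r$ and the multilevel analogue of Lemma~\ref{Lemapent}: the nested operators produce a linear combination of $\nu$-fold contour integrals whose leading, $M^{k_1+\cdots+k_\nu}$, part is $\mathcal{F}_{k_1}^{(N_1)}\cdots\mathcal{F}_{k_\nu}^{(N_\nu)}$ plus the sum over $\pi_1\sqcup\pi_2 = [\nu]$, $\pi_1\neq\emptyset$, of $\left(\prod_{(i,j)\in\pi_1}\mathcal{G}_{k_i,k_j}\right)\left(\prod_{i\in\pi_2}\mathcal{F}_{k_i}\right)$, mirroring (\ref{adfh}). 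The $\mathcal{F}$-only part contributes the environment piece of the covariance (the multilevel analogue of the integral (\ref{C})), which by $1$-CLT appropriateness converges to the sum over pairings of $\prod(\hat{\mathfrak{c}}_{k_i,k_j} - \hat{\mathfrak{d}}_{k_i,k_j})$ for $\nu$ even and to $0$ for $\nu$ odd; the terms with factors $\mathcal{G}_{k_i,k_j}$ produce, after the computation of Theorem~\ref{gmstrs} carried out between two distinct levels $N_i,N_j$, the cross-level Gaussian-free-field contributions $\hat{\mathfrak{d}}_{k_i,k_j}$ (of the form $z_iz_j/(z_i-z_j)^2$ with the appropriate $\gamma_i,\gamma_j$). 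Recombining the two families exactly as in the proof of Theorem~\ref{normal2!} (the manipulation (\ref{0909})--(\ref{33345})) yields $\sum_{\pi\in P_2(\nu)}\prod_{(i,j)\in\pi}\hat{\mathfrak{c}}_{k_i,k_j}$, i.e. Wick's formula, identifying the limit as a Gaussian vector.

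It then remains to discard all remaining terms. Here I would reuse the inductive cancellation argument of Proposition~\ref{CLTProp} and Theorems~\ref{normal1!}, \ref{normal2!} essentially verbatim: write $\tmmathbf{X}_{j,N,M} = \dot{\tmmathbf{X}}_{j,N,M} + \bar{\tmmathbf{X}}_{j,N,M}$ and $\tmmathbf{Y}_{j,N} = \dot{\tmmathbf{Y}}_{j,N} + \bar{\tmmathbf{Y}}_{j,N}$, attach the tuples $(\eta,\theta)$ to each $\mathcal{F}_k$-factor (and to the $\mathcal{G}_{k,l}$-factors when powers of $M$ must be absorbed), note that terms with some $(\eta,\theta)$ equal to $(0,0)$ are potentially divergent but cancel by the binomial identity $\sum_{\sigma}(-1)^{\sigma}\binom{n}{\sigma} = 0$ once summed over the expansion of $\dot p = p - \bar p$, and that the remaining $\mathcal{H}$-type integrals are of strictly lower degree and hence vanish after the $M^{-(k_1+\cdots+k_\nu)}$ scaling. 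The main obstacle I expect is bookkeeping: in (\ref{090909}) the operators $\mathcal{D}_{k_i}$ act on nested variable sets $\{x_1,\ldots,x_{N_i}\}$, so one must track carefully how differentiating $f_{N_i}$ versus the symmetric functions $g_m$ already produced at a coarser level $N_j$ ($j>i$) affects both the power of $M$ and which of the coefficients $\breve{\mathfrak{q}},\breve{\mathfrak{p}},\breve{\mathfrak{s}}$ and which cross-level Gaussian-free-field kernel appears, and to confirm that the combinatorial cancellations of Proposition~\ref{CLTProp} still apply level by level. Once the degree count and the cancellations are in place, the identification of the limit follows as above.
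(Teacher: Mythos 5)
Your proposal is correct and follows essentially the same route as the paper: the paper's own proof simply reduces Theorem \ref{alloena} to the one-level arguments of Theorems \ref{gmstrs} and \ref{normal2!}, with the single new ingredient being the multilevel analogue $\mathcal{G}_{l,k}^{(N_1,N_2)}$ arising inside $f_{N_1}^{-1}D_l(f_{N_1}\cdot f_{N_2}^{-1}D_k(f_{N_2}))$, which is exactly the $\alpha=2$ cross-level term you identify as producing the $zw/(z-w)^2$ contribution. Your treatment of the $\mathcal{F}$-parts, the Wick recombination via (\ref{0909})--(\ref{33345}), and the cancellation bookkeeping matches the paper's (much terser) argument.
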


\begin{proof}
  The proof is the same as in the one-level case. Note that in the multi-level
  setting, the analogous of $\mathcal{G}_{l, k}$ of Lemma \ref{Lemma3} is
  given by
  \[ \mathcal{G}_{l, k}^{(N_1, N_2)} \assign \frac{- 1}{4 \pi^2} \sum_{m =
     0}^k \binom{k}{m} (- 1)^m m! \sum_{\mu = 0}^{l - 1} \binom{l}{\mu} (l -
     \mu) (- 1)^{\mu - 1} \mu ! \sum_{\underset{i_s \neq i_r \text{ for } s
     \neq r}{i_1, \ldots, i_{k - m} \in [N_2 ; M]}} \sum_{\{ l_1, \ldots, l_m
     \} \subseteq [N_2]} \]
  \[ \sum_{\underset{\iota_s \neq \iota_r \text{ for } s \neq r}{\iota_1,
     \ldots, \iota_{l - \mu - 1} \in [N_1 ; M]}} \sum_{\{ \lambda_0, \ldots,
     \lambda_{\mu} \} \subseteq [N_1]} \oint \oint \frac{(1 - z)^k (1 -
     w)^l}{(z - w)^2} \frac{\beta_{i_1} \ldots \beta_{i_{k - m}}}{(1 -
     \beta_{i_1} z) \ldots (1 - \beta_{i_{k - m}} z)} \]
  \begin{equation}
    \times \frac{1}{(z - y_{l_1}) \ldots (z - y_{l_m})} \frac{\beta_{\iota_1}
    \ldots \beta_{\iota_{l - \mu - 1}}}{(1 - \beta_{\iota_1} w) \ldots (1 -
    \beta_{\iota_{l - \mu - 1}} w)} \frac{1}{(w - y_{\lambda_0}) \ldots (w -
    y_{\lambda_{\mu}})} d \text{} z \text{} d \text{} w. \label{lktkos}
  \end{equation}
  This term appears in $f_{N_1}^{- 1} D_l (f_{N_1} \cdot f_{N_2}^{- 1} D_k
  (f_{N_2}))$ (where $D_k$ acts on $x_1, \ldots, x_{N_2}$, and $D_l$ acts on
  $x_1, \ldots, x_{N_1}$) in the same way that $\mathcal{G}_{l, k}$ appears in
  $f_N^{- 1} D_l D_k (f_N)$, and it will contribute to the limiting covariance
  for the same reasons as in Theorem \ref{gmstrs}.
\end{proof}

\begin{example}
  \label{ex24}The above theorem can be applied in the case that the random
  edge weights are described by the eigenvalues of the Gaussian Unitary
  Ensemble. For simplicity, we consider the one-level case. Let
  $\tmmathbf{x}_i = 1$ and $\tmmathbf{\beta}_{M - i + 1} = (\tmmathbf{l}_i^2 +
  1) / (\tmmathbf{l}_i^2 + 2)$, for all $i = 1, \ldots, M$, where
  $\tmmathbf{l}_1 \leq \tmmathbf{l}_2 \leq \cdots \leq \tmmathbf{l}_M$ are the
  ordered eigenvalues of GUE of size $M$. Equivalently, for the edge weights
  of Figure 1, we have $\tmmathbf{w}_{M - i + 1} =\tmmathbf{l}_i^2 + 1$, and
  the rest are equal to 1. In Theorem \ref{gmstrs}, we showed that the
  limiting covariance $\lim_{M \rightarrow \infty} N^{- k_1 - k_2} \mathbb{E}
  [\dot{p}_{k_1}^{(N, M)} \dot{p}_{k_2}^{(N, M)}]$, at level $N = N (M) < M$,
  where $N / M \rightarrow \gamma$ as $M \rightarrow \infty$, will have the
  form
  \[ \frac{- 1}{4 \pi^2} \oint_{| w | = \varepsilon} \oint_{| z | = 2
     \varepsilon} \left( \frac{1}{\gamma} \mathbf{F} (z) + \frac{z - 1}{z}
     \right)^{k_1} \left( \frac{1}{\gamma} \mathbf{F} (w) + \frac{w - 1}{w}
     \right)^{k_2} \left( \mathbf{G} (z, w) + \frac{1}{(z - w)^2} \right) d
     \text{} z \text{} d \text{} w. \]
  The functions $\mathbf{F}, \mathbf{G}$ can be determined by known results
  for the linear eigenvalue statistics of GUE {\cite{B97}}, {\cite{B98}}. Due
  to Theorem \ref{LLN!} we have
  \[ \mathbf{F} (z) = \lim_{M \rightarrow \infty} \frac{1}{M} \mathbb{E}
     \left[ \sum_{i = 1}^{M - N} \frac{\tmmathbf{l}_i^2 + 1}{2 - z - (z - 1)
     \tmmathbf{l}_i^2} \right] = \frac{1}{2 \pi} \int_{-
     2}^{\varepsilon_{\gamma}} \frac{(x^2 + 1) \sqrt{4 - x^2}}{2 - z - (z - 1)
     x^2} d \text{} x \]
  \[ = \frac{3 - 2 z}{2 \pi (z - 1)^2} \left( \arcsin \left(
     \frac{\varepsilon_{\gamma}}{2} \right) + \frac{\pi}{2} \right) -
     \frac{\varepsilon_{\gamma} \sqrt{4 - \varepsilon_{\gamma}^2}}{4 \pi (z -
     1)} \text{{\hspace{20em}}} \]
  \[ + \frac{5 z - 6}{2 \pi (z - 1)^2 \sqrt{(2 - z) (6 - 5 z)}} \left( \arctan
     \left( \sqrt{\frac{6 - 5 z}{2 - z}} \frac{\varepsilon_{\gamma}}{\sqrt{4 -
     \varepsilon_{\gamma}^2}} \right) + \frac{\pi}{2} \right), \]
  where $\varepsilon_{\gamma}$ is uniquely determined by
  \[ \frac{1}{2 \pi} \int_{- 2}^{\varepsilon_{\gamma}} \sqrt{4 - x^2} d
     \text{} x = 1 - \gamma . \]
  We have shown that $\mathbf{G} (z, w)$ is given by the limit of
  \[ \tmop{Cov} \left( \sum_{i = N + 1}^M \frac{\tmmathbf{\beta}_i}{1
     -\tmmathbf{\beta}_i z}, \sum_{i = N + 1}^M \frac{\tmmathbf{\beta}_i}{1
     -\tmmathbf{\beta}_i w} \right) \text{{\hspace{16em}}} \]
  \begin{equation}
    \text{{\hspace{5em}}=} \tmop{Cov} \left( \sum_{i = 1}^{M - N}
    \frac{\tmmathbf{l}_i^2 + 1}{2 - z - (z - 1) \tmmathbf{l}_i^2}, \sum_{i =
    1}^{M - N} \frac{\tmmathbf{l}_i^2 + 1}{2 - w - (w - 1) \tmmathbf{l}_i^2}
    \right) \label{WSX}
  \end{equation}
  It emerges from {\cite{B97}}, that this limit can be written as
  \[ \frac{1}{4 \pi^2} \int_{- 2}^{\varepsilon_{\gamma}} \int_{-
     2}^{\varepsilon_{\gamma}} \frac{(\varphi (z, x) - \varphi (z, y))
     (\varphi (w, x) - \varphi (w, y))}{(x - y)^2 \sqrt{4 - x^2} \sqrt{4 -
     y^2}} \left( 4 - x \text{} y \right) d \text{} x \text{} d \text{} y
     \text{{\hspace{12em}}} \]
  \begin{equation}
    \text{{\hspace{10em}}} - \frac{\sqrt{4 - \varepsilon_{\gamma}^2}}{2 \pi^2}
    \int_{- 2}^{\varepsilon_{\gamma}} \frac{(\varphi (z, x) - \varphi (z,
    \varepsilon_{\gamma})) (\varphi (w, x) - \varphi (w,
    \varepsilon_{\gamma}))}{(x - \varepsilon_{\gamma}) \sqrt{4 - x^2}} d
    \text{} x, \label{CVCVCB}
  \end{equation}
  where $\varphi (z, x) = (x^2 + 1) / (2 - z - (z - 1) x^2)$.
  
  The first double integral can be written as $\frac{1}{(z - w)^2}
  \mathbf{G}_1 (z, w)$ where $\mathbf{G}_1 (z, w)$ is a finite sum of products
  $\mathbf{F}_1 (z) \mathbf{G}_1 (w)$, and $\mathbf{F}_1, \mathbf{G}_1$ are
  analytic functions. The second integral can be written as $\frac{1}{z - w}
  \mathbf{G}_2 (z, w)$, where $\mathbf{G}_2 (z, w)$ is like $\mathbf{G}_1 (z,
  w)$. Although the computation of these integrals is elementary, the formulas
  of $\mathbf{G}_1 (z, w)$, $\mathbf{G}_2 (z, w)$ are too long and they are
  omitted. Since $\varepsilon_{\gamma} \neq 2$, the covariance consists of a
  new random object. Its \ singularity is weaker than that of the Gaussian
  Free Field.
  
  Notice that if the edge weights till level $N$ are encoded by all the
  eigenvalues of GUE, the limit of (\ref{WSX}) can be simplified since this
  corresponds to the case where $\varepsilon_{\gamma} = 2$, and the second
  integral in (\ref{CVCVCB}) will not contribute. For example, this would be
  the case if $\tmmathbf{w}_{M - i + 1} = \tilde{\tmmathbf{l}}_i^2 + 1$, for
  all $i = 1, \ldots, M - N$, where $\tilde{\tmmathbf{l}}_1 \leq \cdots \leq
  \tilde{\tmmathbf{l}}_{M - N}$ are eigenvalues of GUE of size $M - N$, and
  $\tmmathbf{w}_i = 1$ for $i \leq N$. Then, as $M \rightarrow \infty$, the
  covariance at level $N$ will be equal to
  \[ \lim_{M \rightarrow \infty} \frac{1}{N^{k_1 + k_2}} \mathbb{E}
     [\dot{p}_{k_1}^{(N, M)} \dot{p}_{k_2}^{(N, M)}] \text{{\hspace{20em}}} \]
  \[ = \frac{- 1}{4 \pi^2} \oint_{| w | = \varepsilon} \int_{| z | = 2
     \varepsilon} \left( \left( \frac{1}{\gamma} - 1 \right) \frac{5 z - 6 +
     (3 - 2 z) \sqrt{(2 - z) (6 - 5 z)}}{2 (z - 1)^2 \sqrt{(2 - z) (6 - 5 z)}}
     + \frac{z - 1}{z} \right)^{k_1} \]
  \[ \times \left( \left( \frac{1}{\gamma} - 1 \right) \frac{5 w - 6 + (3 - 2
     w) \sqrt{(2 - w) (6 - 5 w)}}{2 (w - 1)^2 \sqrt{(2 - w) (6 - 5 w)}} +
     \frac{w - 1}{w} \right)^{k_2} \]
  \[ \times \frac{1}{(z - w)^2} \left( \frac{24 - 16 z - 16 w + 10 z \text{}
     w}{(2 - z) (2 - w)} \sqrt{\frac{(2 - z) (2 - w)}{(6 - 5 z) (6 - 5 w)}} -
     1 \right) d \text{} z \text{} d \text{} w. \]
\end{example}
\
\

\subsection*{Gaussian Free Field and Poisson distribution}

In Theorems \ref{normal1!} and \ref{normal2!}, we established Gaussian fluctuations for $m_N
[\overline{\rho_{(\tmmathbf{\beta}, \tmmathbf{y})}}]$. These arise from the
fact that the random weights satisfy a central limit theorem, so that the
fluctuations of the random empirical measures associated to $\{ \tmmathbf{}
\tmmathbf{y}_i \}_{i = 1}^M$ and $\{ \tmmathbf{\beta}_i \}_{i = 1}^M$ are
asymptotically Gaussian. Now we consider the simplest case where the global
fluctuations of $\{ \tmmathbf{y}_i \}_{i = 1}^M$, $\{ \tmmathbf{\beta}  \}_{i
	= 1}^M$ are not Gaussian, and we can still characterize the fluctuations of
$m_N [\overline{\rho_{(\tmmathbf{\beta}, \tmmathbf{y})}}]$.

For simplicity we consider $\tmmathbf{y}_i = 0$, for every $i = 1, \ldots, M$. The random variables $\{ \tmmathbf{\beta}_i \}_{i = 1}^M$ will be
dependent and for each $\tmmathbf{\beta}_i$, its probability distribution is
supported on $\{ a, b \}$, where $a, b \in (0, 1)$. We also consider a random
variable $\Xi_M$ that takes values on $\{ 0, \ldots, M \}$. Then, we choose the
random edge weights $\tmmathbf{w}_i =\tmmathbf{\beta}_i / (1
-\tmmathbf{\beta}_i)$ of the one-periodic Aztec diamond of Figure 1 in the
following way. Starting from $\tmmathbf{w}_M$, the edge weights $\tmmathbf{w}_i$ are equal to $a
/ (1 - a)$ and at some random level $\Xi_M$ of the Aztec diamond we update the edge
weights and they become equal to $b / (1 - b)$ for the remaining levels. This means
that
\begin{equation}
	(\tmmathbf{w}_M, \ldots, \tmmathbf{w}_1) = \left( \left( \frac{a}{1 - a}
	\right)^{\Xi_M}, \left( \frac{b}{1 - b} \right)^{M - \Xi_M} \right)
	\text{\quad or, equivalently } (\tmmathbf{\beta}_M, \ldots,
	\tmmathbf{\beta}_1) = (a^{\Xi_M}, b^{M - \Xi_M}) . \label{Poisson}
\end{equation}
For the remaining of this section, the random variables $\{ \tmmathbf{\beta}_i
\}_{i = 1}^M$ will be given by (\ref{Poisson}).
%We also assume that $\Xi_M$
%has a binomial distribution with parameters $M, p_M$, such that $\lim_{M
%	\rightarrow \infty} M p_M =l$.
	\begin{theorem}
		\label{PoissonCLT}For the random variables $\{ \tmmathbf{x}_i \}_{i = 1}^M$,
		$\{ \tmmathbf{\beta}_i \}_{i = 1}^M$ assume that $\tmmathbf{x}_i \equiv 1$
		for every $i = 1, \ldots, M$, $\tmmathbf{\beta}_i$ are given by
		(\ref{Poisson}), and the random variable $\Xi_M$ has a binomial distribution
		with parameters $M, p_M$, such that $\lim_{M \rightarrow \infty} M p_M =
		l$. Let $N_i = \lfloor \gamma_i M \rfloor$, for $0 < \gamma_1 \leq
		\cdots \leq \gamma_{\nu} < 1$. Then for any positive integers $k_1, \ldots,
		k_{\nu}$ we have
		\begin{equation}
			\lim_{M \rightarrow \infty} \frac{1}{M^{k_1 + \cdots + k_{\nu}}}
			\mathbb{E} [\dot{p}_{k_1}^{(N_1, M)} \ldots \dot{p}_{k_{\nu}}^{(N_{\nu},
				M)}] =\mathbb{E} [(\tmmathbf{G}_1 + c_1 (a, b) (\tmmathbf{P}-
			l)) \ldots (\tmmathbf{G}_{\nu} + c_{\nu} (a, b) (\tmmathbf{P}-
			l))], \label{limitPoisson}
		\end{equation}
		where $\tmmathbf{P}$ is a Poisson random variable with parameter
		$l$, and $(\tmmathbf{G}_1, \ldots, \tmmathbf{G}_{\nu})$ is a
		Gaussian random vector with covariance
		\begin{equation}
			\tmop{Cov} (\tmmathbf{G}_i, \tmmathbf{G}_j) = \frac{- 1}{4 \pi^2} \oint
			\oint \frac{d \text{} z \text{} d \text{} w}{(z - w)^2} \left( (1 -
			\gamma_j) \frac{b - b z}{1 - b z} + \gamma_j \frac{z - 1}{z} \right)^{k_j}
			\left( (1 - \gamma_i) \frac{b - b w}{1 - b w} + \gamma_{i } \frac{w -
				1}{w} \right)^{k_{i }}, \label{covarPOisson}
		\end{equation}
		for $i < j$. Moreover, $\tmmathbf{P}$, $(\tmmathbf{G}_1, \ldots,
		\tmmathbf{G}_{\nu})$ are independent, and $c_j (a, b)$ are constants given
		by
		\[ c_j (a, b) = \frac{1}{2 \pi i} \oint \left( (1 - \gamma_{j }) \frac{b - b
			z}{1 - b z} + \gamma_{j } \frac{z - 1}{z} \right)^{k_j} \left( \frac{b}{1
			- b z} - \frac{a}{1 - a z} \right) d \text{} z, \]
		where the contour is a positively oriented circle with radius smaller than
		$1$.
	\end{theorem}
	
	\begin{proof}
		Similarly to CLT appropriate weights, to prove the claim we rely on
		asymptotic properties for the linear statistics of $\tmmathbf{\beta}_1,
		\ldots, \tmmathbf{\beta}_M$. By the definition (\ref{Poisson}), for $N = N
		(M) < M$, with $N \rightarrow \infty$, as $M \rightarrow \infty$, we have
		that
		\[ \tmmathbf{\beta}_{N + 1}^k + \cdots +\tmmathbf{\beta}_M^k = (M - N) a^k
		\tmmathbf{1}_{\{ \Xi_M \geq M - N \}} +\tmmathbf{1}_{\{ \Xi_M < M - N \}}
		(\Xi_M a^k + (M - N - \Xi_M) b^k) . \]
		Using the well-known formula
		\[ \mathbb{E} [\Xi_M^k] = \sum_{n = 0}^k S (k, n) M (M - 1) \ldots (M - n +
		1) p_M^n \]
		for the moments of the binomial distribution, we immediately get that
		\[ \lim_{M \rightarrow \infty} \frac{1}{M} \mathbb{E} [\Xi_M^k] = 0
		\text{\quad and\quad} \lim_{M \rightarrow \infty} (M - N)^k \mathbb{P}
		(\Xi_M \geq M - N) = 0. \]
		Therefore, we deduce that
		\begin{equation}
			\lim_{M \rightarrow \infty} \mathbb{E} \left[ \prod_{i = 1}^{\nu}
			\frac{\tmmathbf{\beta}_{N_i + 1}^{k_i} + \cdots
				+\tmmathbf{\beta}_M^{k_i}}{M - N_i} \right] = \prod_{i = 1}^{\nu} b^{k_i},
			\label{Poisson1}
		\end{equation}
		and
		\begin{equation}
			\lim_{M \rightarrow \infty} \mathbb{E} \left[ \prod_{i = 1}^{\nu}
			(\tmmathbf{\beta}_{N_i + 1}^{k_i} + \cdots +\tmmathbf{\beta}_M^{k_i}
			-\mathbb{E} [\tmmathbf{\beta}_{N_i + 1}^{k_i} + \cdots
			+\tmmathbf{\beta}_M^{k_i}]) \right] = \prod_{i = 1}^{\nu} (a^{k_i} -
			b^{k_i}) \mathbb{E} [(\tmmathbf{P}- l)^{\nu}], \label{Poisson2}
		\end{equation}
		where $\tmmathbf{P}$ is a Poisson random variable with parameter $l$.
		
		For the same reasons as in Theorem \ref{normal2!} and Theorem \ref{alloena}, the limit
		(\ref{limitPoisson}) will emerge by the higher order terms of
		\begin{equation}
			\sum_{\pi_1 \sqcup \pi_2 = [\nu]} \left( \prod_{(i, j) \in \pi_1}
			\mathcal{G}_{k_i, k_j}^{(N_i, N_j)} \right) \left( \prod_{i \in \pi_2}
			\mathcal{F}_{k_i}^{(N_i)} \right), \label{sumPoisson}
		\end{equation}
		where the sum is over all sets $\pi_1$ of pairs $(i, j)$, with $i < j$, of
		elements of $[\nu]$ and all subsets $\pi_2$ of $[\nu]$, such that $\pi_1,
		\pi_2$ are disjoint and their union is equal to $[\nu]$.
		
		In the case where $\pi_1 = \emptyset$, we have shown that the sum
		(\ref{sumPoisson}) gives a higher order term
		\[ \frac{M^{k_1 + \cdots + k_{\nu} + \nu}}{(2 \pi i)^{\nu}} \oint \ldots
		\oint \prod_{i = 1}^{\nu} (1 - z_i)^{k_i} \left( (1 - \gamma_i) \sum_{j =
			1}^{\infty} \bar{\tmmathbf{X}}_{j, N_i, M} z_i^{j - 1} -
		\frac{\gamma_i}{z_i} \right)^{k_i} \]
		\[ \times \mathbb{E}_{\tmmathbf{\beta}} \left[ \prod_{i = 1}^{\nu} \left(
		(\gamma_i - 1) \sum_{j = 1}^{\infty} \dot{\tmmathbf{X}}_{j, N_i, M}
		z_i^{j - 1} \right) \right] d \text{} z_1 \ldots d \text{} z_{\nu}, \]
		where we recall that $\tmmathbf{X}_{j, N, M} = \frac{1}{M - N} p_j
		(\tmmathbf{\beta}_{N + 1}, \ldots, \tmmathbf{\beta}_M)$. Due to
		(\ref{Poisson1}), (\ref{Poisson2}), this term will contribute to the limit
		(\ref{limitPoisson}) by
		\[ \frac{\mathbb{E} [(\tmmathbf{P}- l)^{\nu}]}{(2 \pi i)^{\nu}} 
		\prod_{i = 1}^{\nu} \oint \left( (1 - \gamma_i) \frac{b - b \text{} z}{1
			- b \text{} z} + \gamma_i \frac{z - 1}{z} \right)^{k_i} \left( \frac{b}{1
			- b \text{} z} - \frac{a}{1 - a \text{} z} \right) d \text{} z, \]
		where the contour encircles $0$, but it does not encircle $b^{- 1}, a^{-
			1}$.
		
		Now we consider the terms of (\ref{sumPoisson}) for $\pi_1 \neq \emptyset$.
		Let $\pi_1 = \{ (i_1, j_1), \ldots, (i_{\rho}, j_{\rho}) \}$. As we showed
		in the proof of Theorem \ref{normal2!}, the higher order term of the summand of
		(\ref{sumPoisson}) that corresponds to such $\pi_1$ is
		\[ \left( \frac{1}{2 \pi i} \right)^{\nu} \left( \prod_{n = 1}^{\rho}
		M^{k_{i_{n}} + k_{j_{n}}} \right)  \left( \prod_{\alpha_2
			\in [\nu] \backslash \{ i_n, j_n \}_{n = 1}^{\rho}} M^{k_{\alpha_2} + 1 }
		\right) \oint \ldots \oint \frac{(1 - z_1)^{k_1} \ldots (1 -
			z_{\nu})^{k_{\nu}}}{\prod_{n = 1}^{\rho} (z_{i_n} - z_{j_n})^2} \]
		\[ \times \mathbb{E}_{\tmmathbf{\beta}} \left[ \prod_{\alpha_2 \in [\nu]
			\backslash \{ i_n, j_n \}_{n = 1}^{\rho}} \left( (\gamma_{\alpha_2} - 1)
		\sum_{j = 1}^{\infty} \dot{\tmmathbf{X}}_{j, N_{\alpha_2}, M}
		z_{\alpha_2}^{j - 1} \right) \right. \text{{\hspace{6em}}} \]
		\[ \text{{\hspace{6em}}} \left. \times \prod_{\alpha_1 \in \{ i_n, j_n \}_{n
				= 1}^{\rho}} \left( (1 - \gamma_{\alpha_1}) \sum_{j = 1}^{\infty}
		\tmmathbf{X}_{j, N_{\alpha_1}, M} z_{\alpha_1}^{j - 1} -
		\frac{\gamma_{\alpha_1}}{z_{\alpha_1}} \right)^{k_{\alpha_1}} \right] \]
		\[ \times \prod_{\alpha_2 \in [\nu] \backslash \{ i_n, j_n \}_{n =
				1}^{\rho}} \left( (1 - \gamma_{\alpha_2}) \sum_{j = 1}^{\infty}
		\bar{\tmmathbf{X}}_{j, N_{\alpha_2}, M} z_{\alpha_2}^{j - 1} -
		\frac{\gamma_{\alpha_2}}{z_{\alpha_2}} \right)^{k_{\alpha_2}} d \text{}
		z_1 \ldots d \text{} z_{\nu} . \]
		Due to
		(\ref{Poisson1}), (\ref{Poisson2}), such terms will contribute to the limit (\ref{limitPoisson}) by

		\[ \frac{\mathbb{E} [(\tmmathbf{P}- l)^{\nu - 2 \rho}]}{(2 \pi
			i)^{\nu - 2 \rho}} \prod_{\alpha  \in [\nu] \backslash \{ i_n, j_n \}_{n
				= 1}^{\rho}} \oint \left( (1 - \gamma_{\alpha }) \frac{b - b z}{1 - b z}
		+ \gamma_{\alpha } \frac{z - 1}{z} \right)^{k_{\alpha}} \left( \frac{b}{1
			- b z} - \frac{a}{1 - a z} \right) d \text{} z \]
		\begin{equation}
			\times \left( \frac{1}{2 \pi i} \right)^{2 \rho} \prod_{n = 1}^{\rho}
			\oint \oint \frac{1}{(z - w)^2} \left( (1 - \gamma_{j_n}) \frac{b - b z}{1
				- b z} + \gamma_{j_n} \frac{z - 1}{z} \right)^{k_{j_n}} \left( (1 -
			\gamma_{i_n}) \frac{b - b w}{1 - b w} + \gamma_{i_n} \frac{w - 1}{w}
			\right)^{k_{i_n}} d \text{} z \text{} d \text{} w.
			\label{Poissoncontribution}
		\end{equation}
		Considering the sums of all the limits (\ref{Poissoncontribution}) that
		arise from (\ref{limitPoisson}), we deduce that the claim holds.
	\end{proof}
	
	\begin{remark}
		\label{PoissonRemark}There are two random objects that describe the global
		fluctuations
		in Theorem \ref{PoissonCLT}. It is well know that the Gaussian term
		corresponds to the Gaussian Free Field. The covariance of the form
		(\ref{covarPOisson}) was shown in \cite{B3} for the global fluctuations of the
		one-periodic Aztec diamond with deterministic weights $w_i = b / (1 - b)$.
		The additional Poisson term is new and it arises from outliers that appear
		in the limit shape of the random partition that corresponds to a one-dimensional slice of the Aztec diamond.
		
		To make it precise, consider the edge weights (\ref{Poisson}), and assume that $\Xi_M$ is
		deterministic and does not depend on $M$. This is a finite perturbation of
		the one-periodic measure with all edge weights $w_i$ being equal to $b / (1 - b)$,
		in the sense that now only finitely many of the deterministic edge weights
		$w_i$ are not equal to $b / (1 - b)$, but they are equal to $a / (1 - a)$. It
		was shown in \cite{BufetovZografos} (see Example 7) that this perturbation gives rise to
		outliers to the limit shape. This was done by studying its $1 / N$
		correction (see Theorem 14 of \cite{BufetovZografos}). In the framework of
		Theorem \ref{PoissonCLT}, considering $\Xi_M$ random gives rise to finitely
		many outliers, but their number is random. Therefore, the extra random term
		in Theorem \ref{PoissonCLT} describes the fluctuations of the random number
		of outliers, and it is Poisson distributed since we consider $\Xi_M$ to be binomial
		distributed, and its parameter $p_M$ decays like $1 / M$.
		
		The constant $c_j (a, b)$ is in general not zero, for $a \neq b$, and it can
		be described in terms of the $1 / N$ correction of the perturbed
		one-periodic measure mentioned above. More precisely, writing
		\[ \frac{1}{2 \pi i} \oint \left( (1 - \gamma_{}) \frac{b + b z}{1 + b z} +
		\gamma_{} \frac{z + 1}{z} \right)^{k } \frac{a}{1 + a z} d \text{} z \]
		\[ = \sum_{m = 0}^{k - 1} \binom{k}{m + 1} \frac{\gamma^k}{m!}  \left.
		\frac{d^m}{d x^m} \left( x^k \left( \left( \frac{1}{\gamma} - 1
		\right) \frac{b}{1 - b + b x} \right)^{k - m - 1} \frac{a}{1 - a + a x}
		\right) \right|_{x = 1}, \]
		by Theorem 14 and Example 7 of \cite{BufetovZografos} one can easily derive an expression for
		$c_j (a, b)$ as the $j$-th moment of a signed measure. This expression also
		determines the outliers.
	\end{remark}
	
	\begin{remark}
		As we have shown, the difference between the $2^{- 1}$-CLT appropriate
		random weights of Theorem \ref{normal1!} and the $1$-CLT appropriate random weights of
		Theorem \ref{normal2!} is that the former eliminate the Gaussian Free Field fluctuations.
		Let $\Xi_M$ be the binomial distribution with parameters $M$, $p_M$ as
		before. Following the framework of Theorem \ref{PoissonCLT}, and the
		discussion about outliers in Remark \ref{PoissonRemark}, considering the
		one-periodic Aztec diamond of size $M^2$ with random edge weights
		\[ (\tmmathbf{w}_{M^2}, \ldots, \tmmathbf{w}_1) = \left( \left( \frac{a}{1 -
			a} \right)^{M \Xi_M}, \left( \frac{b}{1 - b} \right)^{M^2 - M \Xi_M}
		\right), \]
		we get that the random number of outliers to the limit shape is of order
		$M$. Therefore, in this case the fluctuations of $m_N
		[\overline{\rho_{(\tmmathbf{\beta}, \tmmathbf{y})}}]$ (where $N<M^2$ and $N/M^2$ converges as $M \rightarrow \infty$) occur on scale
		$\sqrt{M^2}$ and they are described only by the random Poisson term of Theorem
		\ref{PoissonCLT}.
	\end{remark}

\section{The Quenched Central Limit Theorem for several levels}\label{S6}

In this section, we focus on the quenched CLT for the one-periodic Aztec
diamond with random edge weights. In this case we first fix the (random)
environment, instead of averaging over its randomness.

Let $k_1, \ldots, k_r$ be non-negative integers, $0 < \gamma_1 \leq \cdots
\leq \gamma_r \leq 1$, and $N_i = \lfloor \gamma_i M \rfloor$. For the Schur
process with random parameters $\mathbb{P}_{\tmmathbf{\beta}, \tmmathbf{y}}$,
we will analyze the asymptotic behavior of the quenched moments.
\begin{equation}
  X_{\gamma_1, \ldots, \gamma_r}^{k_1, \ldots, k_r} (\tmmathbf{\beta},
  \tmmathbf{y}) \assign \frac{1}{M^{k_1 + \cdots + k_r}}
  \mathbb{E}_{\tmmathbf{\lambda}} \left[ \prod_{i = 1}^r (p_{k_i}^{(N_i, M)}
  -\mathbb{E}_{\tmmathbf{\lambda}} [p_{k_i}^{(N_i, M)}]) \right] .
  \label{qmnm}
\end{equation}
We recall that $p_{k_i}^{(N_i, M)} = \sum_{j = 1}^{N_i} (\lambda_j^{(N_i)} +
N_i - j)^{k_i}$, and by $\mathbb{E}_{\tmmathbf{\lambda}}$ we denote the
expectation with respect to $\mathbb{P}_{\tmmathbf{\beta}, \tmmathbf{y}}$ for
a given realization of disorder.

\begin{theorem}
  \label{NMNm}Assume that $\{ \tmmathbf{x}_i \}_{i = 1}^M$, $\{
  \tmmathbf{\beta}_j \}_{j = 1}^M$ are $\varepsilon$-CLT appropriate sequences
  of random variables, where $\varepsilon = 1 / 2, 1$, and we denote
  $\tmmathbf{y}_i = 1 -\tmmathbf{x}_i$. Then, we have the following:
  \begin{itemize}
    {\item Almost surely with respect to the random environment $\{
    \tmmathbf{x}_i, \tmmathbf{\beta}_j \}$, the collection of random variables
    \[ \{ M^{- k} (p_k^{(\lfloor \gamma M \rfloor, M)}
       -\mathbb{E}_{\tmmathbf{\lambda}} [p_k^{(\lfloor \gamma M \rfloor, M)}])
       \}_{k \geq 1, 0 < \gamma \leq 1} \]
    converges in (finite-dimensional) distribution, with respect to
    $\mathbb{P}_{\tmmathbf{\beta}, \tmmathbf{y}}$, to a Gaussian random
    vector.}{\item Let $N_1 = \lfloor \gamma_1 M \rfloor$, $N_2 = \lfloor
    \gamma_2 M \rfloor$, for $0 < \gamma_1 \leq \gamma_2 \leq 1$. We have that
    almost surely,
    \[ \lim_{M \rightarrow \infty} \frac{1}{M^{k + l}}
       \mathbb{E}_{\tmmathbf{\lambda}} [(p_l^{(N_1, M)}
       -\mathbb{E}_{\tmmathbf{\lambda}} [p_l^{(N_1 .M)}]) (p_k^{(N_2, M)}
       -\mathbb{E}_{\tmmathbf{\lambda}} [p_k^{(N_2, M)}])] \]
    \[ = \frac{- 1}{4 \pi^2} \oint_{| w | = \varepsilon'} \oint_{| z | = 2
       \varepsilon'} \frac{1}{(z - w)^2} \left( (1 - \gamma_2) \left( \frac{1
       - z}{z} \mathbf{F}_2 (z) - \frac{1 - z}{z} \right) + \gamma_2 \frac{z -
       1}{z} \mathbf{F}_1 \left( \frac{1}{z} \right) \right)^k \]
    \begin{equation}
      \text{{\hspace{4em}}} \left( (1 - \gamma_1) \left( \frac{1 - w}{w}
      \mathbf{F}_2 (w) - \frac{1 - w}{w} \right) + \gamma_1 \frac{w - 1}{w}
      \mathbf{F}_1 \left( \frac{1}{w} \right) \right)^l d \text{} z \text{} d
      \text{} w, \label{poslft}
    \end{equation}
    where the functions $\mathbf{F}_1$, $\mathbf{F}_2$ are the same as in
    Theorem \ref{LLN!}.}
  \end{itemize}
\end{theorem}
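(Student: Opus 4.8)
The plan is to deduce Theorem \ref{NMNm} from the deterministic‑parameter machinery of Sections \ref{S3}--\ref{S5} together with an almost sure strengthening of the law of large numbers for the environment. The guiding observation is that, once the disorder $\{\tmmathbf{x}_i,\tmmathbf{\beta}_j\}$ is frozen, each $\mathcal{F}_k$ (and more generally each $f_N^{-1}\mathcal{D}_k f_N$) is a fixed scalar equal to the leading part of the quenched mean $\mathbb{E}_{\tmmathbf{\lambda}}[p_k^{(N,M)}]$; consequently, in the quenched centered moments (\ref{qmnm}) the environmental fluctuation terms $\mathbf{G}_1,\mathbf{G}_2,\mathbf{G}_3$ that dominate the annealed covariance in Theorems \ref{gmstrs}, \ref{alloena} are cancelled exactly by the subtraction of the means, and only the Gaussian Free Field kernel $(z-w)^{-2}$ survives.

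First I would establish the almost sure law of large numbers. From $\varepsilon$‑CLT appropriateness with $\varepsilon\in\{\tfrac12,1\}$ one reads off the variance bound $\tmop{Var}\bigl(\sum_{i\le \lfloor Mt\rfloor}(\tmmathbf{y}_i^j-\mathbb{E}[\tmmathbf{y}_i^j])\bigr)=O(M^{2-2\varepsilon})$ and the analogous bound for $\tmmathbf{\beta}$, so that $\tmop{Var}(\tmmathbf{Y}_{j,N})$ and $\tmop{Var}(\tmmathbf{X}_{j,N,M})$ are $O(M^{-2\varepsilon})$. Chebyshev's inequality together with Borel--Cantelli along the subsequence $M=\lceil n^{1/\varepsilon}\rceil$ (namely $M=n$ for $\varepsilon=1$, $M=n^{2}$ for $\varepsilon=\tfrac12$), combined with the convergence in expectation built into the LLN/CLT appropriateness hypotheses, gives that $\tmmathbf{Y}_{j,N}$ and $\tmmathbf{X}_{j,N,M}$ converge to deterministic limits almost surely; for the intermediate values of $M$ one interpolates, using that by Convention \ref{convention} the summands are uniformly bounded, so a truncated power sum changes by $O(M^{1-\varepsilon})$ between consecutive subsequence points and hence the normalized sums change by $O(M^{-\varepsilon})=o(1)$. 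A single null set suffices for all $j$ and all slice parameters: the uniform convergence near $0$ of $\sum_m\mathfrak{c}_m z^m$ and $\sum_m\mathfrak{g}_m z^m$ from Definition \ref{Def} controls the tails in $j$, so that almost surely $\sum_{j\ge0}\tmmathbf{Y}_{j,N}z^{-j-1}$ and $\sum_{j\ge1}\tmmathbf{X}_{j,N,M}z^{j-1}$ converge uniformly, on the contours used in Section \ref{S3}, to the analytic functions built from $\mathbf{F}_1$ and $\mathbf{F}_2$.

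Next I would analyse the quenched centered moments. Writing $X_{\gamma_1,\ldots,\gamma_r}^{k_1,\ldots,k_r}(\tmmathbf{\beta},\tmmathbf{y})$ via inclusion--exclusion as an alternating sum of $M^{-(k_1+\cdots+k_r)}\mathbb{E}_{\tmmathbf{\lambda}}[\prod_{i\in S}p_{k_i}^{(N_i,M)}]\prod_{i\notin S}\mathbb{E}_{\tmmathbf{\lambda}}[p_{k_i}^{(N_i,M)}]$, each factor is expanded by (\ref{090909}), (\ref{Stirling}) and the multi‑level analogue of (\ref{adfh})--Lemma \ref{Lemapent} (cf.\ the proof of Theorem \ref{alloena}) into $\prod_{i\in S}\mathcal{F}_{k_i}^{(N_i)}+\sum_{\pi_1\ne\emptyset}\bigl(\prod_{(i,j)\in\pi_1}\mathcal{G}_{k_i,k_j}^{(N_i,N_j)}\bigr)\bigl(\prod_{i\in\pi_2}\mathcal{F}_{k_i}^{(N_i)}\bigr)+\mathcal{H}$, plus strictly lower‑order pieces. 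The purely $\mathcal{F}$‑valued contributions cancel in the alternating sum over $S$ by the standard moments‑to‑cumulants identity $\sum_{A\subseteq S\subseteq[r]}(-1)^{r-|S|}=\mathbf{1}\{A=[r]\}$, and this same identity forces a $\mathcal{G}$‑containing term to survive only when its $\mathcal{G}$‑factors form a perfect pairing of $[r]$; hence the leading contribution, of order $M^{k_1+\cdots+k_r}$, is exactly $\sum_{\pi\in P_2([r])}\prod_{(i,j)\in\pi}\mathcal{G}_{k_i,k_j}^{(N_i,N_j)}$, all $\mathcal{H}$‑terms and lower‑order pieces being negligible after dividing by $M^{k_1+\cdots+k_r}$ for the same power‑counting and (where needed) cancellation reasons as in Proposition \ref{CLTProp} and Theorems \ref{normal1!}, \ref{normal2!}. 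The new point relative to the annealed proofs is that here $\mathcal{F}_{k_i}^{(N_i)}$ is a scalar: the terms which in Theorems \ref{gmstrs}, \ref{alloena} recombined (via $\mathbb{E}_{\tmmathbf{\beta},\tmmathbf{y}}$) into $\mathbf{G}_1,\mathbf{G}_2,\mathbf{G}_3$ are precisely the pieces of the alternating sum that cancel.

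Finally, by the first step the leading‑order integrand of $\mathcal{G}_{l,k}^{(N_1,N_2)}$ in (\ref{lktkos}) converges, almost surely and uniformly on the fixed contours, to the integrand with $(z-w)^{-2}$ kernel appearing in (\ref{poslft}); letting $M\to\infty$ gives $M^{-(k+l)}\mathcal{G}_{l,k}^{(N_1,N_2)}\to$ the right‑hand side of (\ref{poslft}). Therefore, almost surely in the environment, $X_{\gamma_1,\ldots,\gamma_r}^{k_1,\ldots,k_r}(\tmmathbf{\beta},\tmmathbf{y})$ tends to $0$ for $r$ odd and to $\sum_{\pi\in P_2([r])}\prod_{(i,j)\in\pi}\mathfrak{c}^{\mathrm{GFF}}_{k_i,k_j}$ for $r$ even, where $\mathfrak{c}^{\mathrm{GFF}}_{k,l}$ is the kernel of (\ref{poslft}); this is Wick's formula, identifying the limit as the Gaussian vector with covariance (\ref{poslft}). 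Exhausting the parameter set by a countable dense family of $(\gamma,k)$ and a diagonal argument produces a single almost sure event on which all finite‑dimensional distributions of $\{M^{-k}(p_k^{(\lfloor\gamma M\rfloor,M)}-\mathbb{E}_{\tmmathbf{\lambda}}[p_k^{(\lfloor\gamma M\rfloor,M)}])\}$ under $\mathbb{P}_{\tmmathbf{\beta},\tmmathbf{y}}$ converge to those of this Gaussian vector, which suffices since a Gaussian law is determined by its moments. The main obstacle is the first step: producing one almost sure event on which the convergence of all power sums $\{\tmmathbf{Y}_{j,N},\tmmathbf{X}_{j,N,M}\}_{j}$ is simultaneous and uniform enough on the integration contours to pass to the limit inside the $r$‑fold integrals --- the variance bounds from $\varepsilon$‑CLT appropriateness make Borel--Cantelli work along a subsequence, but the interpolation between subsequence points and the uniform control of the tail in $j$ (via the convergent power series of Definition \ref{Def}) require care; a secondary point is verifying that the quenched centering genuinely cancels, and does not merely dominate, the environmental terms, which amounts to rerunning the inclusion--exclusion bookkeeping with the $\mathcal{F}$'s frozen.
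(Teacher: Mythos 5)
Your route is genuinely different from the paper's. The paper never proves an almost sure law of large numbers for the environment: it first computes the limit of the \emph{annealed average} of the quenched centered moments (whose leading term is $\mathbb{E}_{\tmmathbf{\beta},\tmmathbf{y}}[\sum_{\pi\in P_2(r)}\prod_{(i,j)\in\pi}\mathcal{G}_{k_i,k_j}^{(N_i,N_j)}]$, exactly the pairing structure you identify, since the quenched centering kills the $\mathcal{F}$-products and hence the environmental terms $\mathbf{G}_1,\mathbf{G}_2,\mathbf{G}_3$), and then proves \emph{concentration} of the quenched moments around this annealed average: Proposition \ref{pfgmsklr} shows $\mathbb{E}_{\tmmathbf{\beta},\tmmathbf{y}}(X-\mathbb{E}X)^4=O(M^{-2})$ for $\varepsilon=1/2$ (second moment for $\varepsilon=1$), and then Chebyshev plus a union bound over $M\geq N$, i.e. (\ref{thskts}), gives the almost sure statement. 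You instead freeze the disorder, prove an almost sure LLN for the power sums $\tmmathbf{X}_{j,N,M},\tmmathbf{Y}_{j,N}$, and feed the a.s.\ limits into the deterministic-parameter expansion (the frozen-disorder leading term $\sum_{\pi}\prod\mathcal{G}^{(N_i,N_j)}$, with (\ref{lktkos}) converging to (\ref{poslft})). Your approach is more transparent about \emph{why} the quenched covariance depends only on the limit shape, and it reduces the quenched CLT to a deterministic-parameter CLT; the paper's approach buys robustness: it needs no pathwise control of the environment at all, only moment bounds, so it applies verbatim to triangular arrays. The cancellation bookkeeping you defer to Proposition \ref{CLTProp} and Theorems \ref{normal1!}, \ref{normal2!} is deferred in exactly the same way in the paper's proof of Proposition \ref{pfgmsklr}, so you are not worse off there.

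The one concrete weak point is the step you yourself flag: for $\varepsilon=1/2$ your Borel--Cantelli-along-$M=n^2$ plus interpolation argument presupposes that $\{\tmmathbf{y}_i\},\{\tmmathbf{\beta}_i\}$ form a single $M$-independent sequence to which one merely appends terms as $M$ grows. Definition \ref{BBBB} does not require this; the weights may be a triangular array whose law depends on $M$ (this is essential in the $\varepsilon=1$ regime, e.g.\ GUE eigenvalues, and is equally permitted for $\varepsilon=1/2$, cf.\ the $M$-dependent i.i.d.\ examples of the annealed literature), and then ``interpolating between consecutive subsequence points'' is meaningless. The fix stays inside your framework: since $\varepsilon$-CLT appropriateness is convergence in the sense of \emph{moments}, the fourth central moment of $\tmmathbf{Y}_{j,\lfloor\gamma M\rfloor}$ and $\tmmathbf{X}_{j,N,M}$ is $O(M^{-4\varepsilon})=O(M^{-2})$, which is summable, so Chebyshev at fourth order plus a union bound over all $M$ gives the a.s.\ convergence of each power sum directly, with no subsequence, no interpolation, and no consistency assumption --- this is the same device the paper applies one level up, to $X$ itself. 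With that replacement (and the deterministic tail bounds $|\tmmathbf{Y}_{j,N}|\leq(1-\delta)^j$, $|\tmmathbf{X}_{j,N,M}|\leq 1$ from Convention \ref{convention} giving uniform convergence on the contours), your argument goes through and yields the same limit (\ref{poslft}).
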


The proof of the above theorem is based on the following proposition.

\begin{proposition}
  \label{pfgmsklr}Let $\{ \tmmathbf{x}_i \}_{i = 1}^M$, $\{ \tmmathbf{\beta}_j
  \}_{j = 1}^M$ be the same as in Theorem \ref{NMNm}, $k_1, \ldots, k_r$ be
  arbitrary non-negative integers, and $0 < \gamma_1 \leq \cdots \leq \gamma_r
  \leq 1$. Then, we have the following:
  \begin{itemize}
    {\item For $\varepsilon = 1 / 2$, the limit
    \[ \lim_{M \rightarrow \infty} M^2 \mathbb{E}_{\tmmathbf{\beta},
       \tmmathbf{y}} (X_{\gamma_1, \ldots, \gamma_r}^{k_1, \ldots, k_r}
       (\tmmathbf{\beta}, \tmmathbf{y}) -\mathbb{E}_{\tmmathbf{\beta},
       \tmmathbf{y}} [X_{\gamma_1, \ldots, \gamma_r}^{k_1, \ldots, k_r}
       (\tmmathbf{\beta}, \tmmathbf{y})])^4 \]
    exists.}
    
    \item For $\varepsilon = 1$, the limit
    \[ \lim_{M \rightarrow \infty} M^2 \mathbb{E}_{\tmmathbf{\beta},
       \tmmathbf{y}} (X_{\gamma_1, \ldots, \gamma_r}^{k_1, \ldots, k_r}
       (\tmmathbf{\beta}, \tmmathbf{y}) -\mathbb{E}_{\tmmathbf{\beta},
       \tmmathbf{y}} [X_{\gamma_1, \ldots, \gamma_r}^{k_1, \ldots, k_r}
       (\tmmathbf{\beta}, \tmmathbf{y})])^2 \]
    exists.
  \end{itemize}
\end{proposition}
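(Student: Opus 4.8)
The plan is to run the quenched moment $X \assign X_{\gamma_1, \ldots, \gamma_r}^{k_1, \ldots, k_r}(\tmmathbf{\beta}, \tmmathbf{y})$ through the same differential-operator and contour-integral machinery as in Sections \ref{S3}--\ref{S5}, and then to read off the order in $M$ of the fluctuations of $X$ over the random environment. By the identity (\ref{090909}) together with Proposition \ref{Prop1}, the non-centered quenched expectation $\mathbb{E}_{\tmmathbf{\lambda}}[\prod_{i = 1}^r p_{k_i}^{(N_i, M)}]$ equals a nested application of the operators $\mathcal{D}_{k_i}$ to the functions $f_{N_i}$; expanding each $p_{k_i}^{(N_i, M)} -\mathbb{E}_{\tmmathbf{\lambda}}[p_{k_i}^{(N_i, M)}]$ by inclusion--exclusion, the centered quenched moment $\mathbb{E}_{\tmmathbf{\lambda}}[\prod_{i = 1}^r (p_{k_i}^{(N_i, M)} -\mathbb{E}_{\tmmathbf{\lambda}}[p_{k_i}^{(N_i, M)}])]$ becomes a linear combination of the multilevel contour integrals $\mathcal{I}(\{\alpha_i, \gamma_i\})$ furnished by the multilevel analogue of Lemma \ref{Lemapent} used in the proofs of Theorems \ref{nzgamskl} and \ref{alloena}. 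After substituting (\ref{asx}), (\ref{zxcv}) and differentiating, the integrand of each such term is, up to deterministic rational factors and coefficients that are monomials in the $N_i$ and the $M - N_i$, a polynomial in the power-sum averages $\tmmathbf{X}_{j, N_i, M}$, $\tmmathbf{Y}_{j, N_i}$ and their $z$-derivatives, multiplied by factors $(z_a - z_b)^{- \alpha}$ with $\alpha \geq 2$ coming from the $\mathcal{G}$- and $\mathcal{H}$-type terms. In particular, for each fixed $M$, the quantity $X$ is one fixed expression, built from these contour integrals, that is polynomial in the random averages $\tmmathbf{X}_{j, \cdot}, \tmmathbf{Y}_{j, \cdot}$.

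Next I would write $\tmmathbf{X}_{j, \cdot} = \bar{\tmmathbf{X}}_{j, \cdot} + \dot{\tmmathbf{X}}_{j, \cdot}$ and $\tmmathbf{Y}_{j, \cdot} = \bar{\tmmathbf{Y}}_{j, \cdot} + \dot{\tmmathbf{Y}}_{j, \cdot}$, expand multilinearly, and classify each resulting term of $X$ by the total number $d$ of dotted factors it carries. The difference $X -\mathbb{E}_{\tmmathbf{\beta}, \tmmathbf{y}}[X]$ retains precisely the terms with $d \geq 1$ (minus their own environment-mean). Raising it to the power $p$ (taking $p = 4$ when $\varepsilon = 1/2$ and $p = 2$ when $\varepsilon = 1$), multiplying by $M^2$ and applying $\mathbb{E}_{\tmmathbf{\beta}, \tmmathbf{y}}$ produces a linear combination of products of $p$ of these nested contour integrals, whose integrands are deterministic rational functions times expectations over the environment of products of the dotted averages. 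The $\varepsilon$-CLT appropriateness of Definition \ref{BBBB} does the analytic work: the uniform convergence of the covariance power series lets me interchange $\lim_M$ with the contour integrals, on contours chosen as in the proof of Theorem \ref{LLN!} (legitimate by Convention \ref{convention}); and convergence in the sense of moments to a Gaussian vector gives, for the expectation of a product of $q$ dotted averages — each of order $M^{- \varepsilon}$ — the asymptotics $M^{- q \varepsilon}$ times the Wick sum of the limiting covariances $\mathfrak{q}, \mathfrak{p}, \mathfrak{s}$ (respectively $\hat{\mathfrak{q}}, \hat{\mathfrak{p}}, \hat{\mathfrak{s}}$) when $q$ is even, and a contribution of smaller order when $q$ is odd.

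The decisive step is the power-counting in $M$. Exactly as in Lemma \ref{Lemapent} and the proofs of Theorems \ref{normal1!} and \ref{normal2!}, the maximal power of $M$ in $\mathbb{E}_{\tmmathbf{\lambda}}[\prod_i (p_{k_i}^{(N_i, M)} - \mathbb{E}_{\tmmathbf{\lambda}}[p_{k_i}^{(N_i, M)}])]$ is carried by the leading contour integrals built from products of the $\mathcal{F}$'s and $\mathcal{G}$'s, while the $\mathcal{H}$-type remainders lose at least one power of $M$ according to the bounds on $\sum (\alpha_i + \gamma_i)$ in Lemma \ref{Lemapent}; moreover the quenched centering removes the would-be $\mathcal{F}$-only leftovers, so that $X$ is of order one and $\mathbb{E}_{\tmmathbf{\beta}, \tmmathbf{y}}[X]$ converges (giving the right-hand side of (\ref{poslft}) when $r = 2$, and more generally the corresponding sum over pairings). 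A term of $X -\mathbb{E}_{\tmmathbf{\beta}, \tmmathbf{y}}[X]$ with exactly one dotted factor is of order $M^{- \varepsilon}$, so the environment-expectation of a product of $p$ such factors is of order $M^{- p \varepsilon}$ by Wick pairing, and $M^{2 - p \varepsilon}$ stays bounded precisely because $p \varepsilon = 2$ in both regimes. It then remains to check that everything else contributes $o(1)$ to $M^2 \mathbb{E}_{\tmmathbf{\beta}, \tmmathbf{y}}[(X -\mathbb{E}_{\tmmathbf{\beta}, \tmmathbf{y}}[X])^p]$: products with total dot-count $D > p$ give $M^{2 - \varepsilon D} = o(1)$, products with an odd total dot-count vanish to still higher order, and the non-leading powers of $M$ in the $\mathcal{H}$-remainders supply further negative powers. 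The genuinely divergent terms that appear when some of the activation tuples $(\eta, \theta)$ in the contour-integral expansion equal $(0, 0)$ are killed by the same binomial cancellation identities as in Proposition \ref{CLTProp}. This yields an explicit (if lengthy) formula for the limit in terms of $\mathbf{F}_1, \mathbf{F}_2$, the limiting covariances and multiple contour integrals; in particular the limit exists.

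The main obstacle I expect is exactly this last bookkeeping: organizing the $p$-fold product so that the only surviving contribution is the Wick pairing of the $p$ one-dot leading terms, and checking via Lemma \ref{Lemapent} and the cancellation mechanism of Proposition \ref{CLTProp} that the many remaining terms are negligible. This is not conceptually new compared with Sections \ref{LLL} and \ref{S5}, but it is the delicate part.
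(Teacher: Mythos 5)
Your proposal follows essentially the same route as the paper's proof: you expand the quenched moment via the multilevel contour-integral machinery of Lemma \ref{Lemapent}, observe that the quenched centering kills the $\mathcal{F}$-type leading term so that $X$ is of order one with the $\mathcal{G}$-pairing product as its leading part, and then control the environment fluctuations by the dotted/barred decomposition, the power counting $M^{2-p\varepsilon}$ with $p\varepsilon=2$, and the binomial cancellation mechanism of Proposition \ref{CLTProp}. This matches the paper's argument, so the plan is correct as an outline of that proof.
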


\begin{proof}
  We start with the case $\varepsilon = 1 / 2$. An expression for
  $X_{\gamma_1, \ldots, \gamma_r}^{k_1, \ldots, k_r} (\tmmathbf{\beta},
  \tmmathbf{y})$ can be obtained by Lemma \ref{Lemapent}, and we have that it
  is a linear combination of integrals $\mathcal{I} (\{ \alpha_i, \theta_i
  \}_{i = 1}^r)$. To justify that the desired limit exists, we use similar
  arguments as in Theorem \ref{normal1!}, but for the special case where
  $\tmmathbf{\beta}_1, \ldots, \tmmathbf{\beta}_M, \tmmathbf{y}_1, \ldots,
  \tmmathbf{y}_M$ are deterministic, since the expectations in the formula of
  $X_{\gamma_1, \ldots, \gamma_r}^{k_1, \ldots, k_r} (\tmmathbf{\beta},
  \tmmathbf{y})$ are taken only with respect to the measure on dimer
  coverings. Therefore, first we have to determine the higher order term which
  would emerge in the case that $\tmmathbf{\beta}_1, \ldots,
  \tmmathbf{\beta}_M, \tmmathbf{y}_1, \ldots, \tmmathbf{y}_M$ were
  deterministic, and then to average with respect to the distribution of the
  random environment.
  
  The higher order term of $X_{\gamma_1, \ldots, \gamma_r}^{k_1, \ldots, k_r}
  (\tmmathbf{\beta}, \tmmathbf{y})$ will not be the same to the one of Theorem
  \ref{nzgamskl}, since for our current framework
  \begin{equation}
    \mathbb{E}_{\tmmathbf{\beta}, \tmmathbf{y}} \left[ \prod_{n =
    1}^{\tmxspace r} \left( \gamma_{r - n + 1} \sum_{j = 1}^{\infty}
    \frac{\dot{\tmmathbf{Y}}_{j, N_{r - n + 1}}}{z_n^{j + 1}} - (\gamma_{r - n
    + 1} - 1) \sum_{j = 1}^{\infty} \dot{\tmmathbf{X}}_{j, N_{r - n + 1}, M}
    z_n^{j - 1} \right) \right], \label{qaqaqaqz}
  \end{equation}
  is equal to zero. The desired term will be given by
  \begin{equation}
    \sum_{\pi \in P_2 (r)} \prod_{\underset{i < j}{(i, j) \in \pi}}
    \mathcal{G}_{k_i, k_j}^{(N_i, N_j)} . \label{-0-0}
  \end{equation}
  Notice that for the annealed framework of Theorem \ref{normal1!}, we showed
  that there are more terms of higher order than (\ref{-0-0}), except from the
  one that corresponds to (\ref{qaqaqaqz}), but they will also vanish in the
  current case because they depend on $\dot{\tmmathbf{Y}}_{j, N_i}$,
  $\dot{\tmmathbf{X}}_{j, N_i, M}$.
  
  The desired limit can be computed explicitly, following similar arguments as
  in Theorem \ref{normal1!}. More precisely, for the higher order term of
  $\mathcal{G}_{l, k}^{(N_i, N_j)}$, writing $\tmmathbf{X}_{n, N_j, M} =
  \dot{\tmmathbf{X}}_{n, N_j, M} + \bar{\tmmathbf{X}}_{n, N_j, M}$,
  $\tmmathbf{Y}_{n, N_j} = \dot{\tmmathbf{Y}}_{n, N_j} +
  \bar{\tmmathbf{Y}}_{n, N_j}$, and applying the binomial theorem, we
  introduce a collection of integers $(\eta^{(1)} \comma \eta^{(2)},
  \theta^{(1)}, \theta^{(2)})$ that correspond to the stochastic summands, in
  order to absorb $M^2$ using the $2^{- 1}$-CLT appropriateness. This implies
  that the higher order term is an explicit linear combination of double
  contour integrals
  \[ \oint \oint \frac{(1 - w)^l (1 - z)^k}{(z - w)^2} \left( \sum_{n =
     1}^{\infty} \dot{\tmmathbf{X}}_{n, N_j, M} z^{n - 1} \right)^{\eta^{(1)}}
     \left( \sum_{n = 1}^{\infty} \frac{\dot{\tmmathbf{Y}}_{n, N_j}}{z^{n +
     1}} \right)^{\theta^{(1)}} \left( \sum_{n = 1}^{\infty}
     \dot{\tmmathbf{X}}_{n, N_i, M} w^{n - 1} \right)^{\eta^{(2)}} \]
  \[ \left( \sum_{n = 1}^{\infty} \frac{\dot{\tmmathbf{Y}}_{n, N_i}}{w^{n +
     1}} \right)^{\theta^{(2)}} \left( \sum_{n = 1}^{\infty}
     \bar{\tmmathbf{X}}_{n, N_j, M} z^{n - 1} \right)^{k - m - \eta^{(1)}}
     \left( \sum_{n = 0}^{\infty} \frac{\bar{\tmmathbf{Y}}_{n, N_j}}{z^{n +
     1}} \right)^{m - \theta^{(1)}} \]
  \[ \left( \sum_{n = 1}^{\infty} \bar{\tmmathbf{X}}_{n, N_i, M} w^{n - 1}
     \right)^{l - \mu - \eta^{(2)}} \left( \sum_{n = 0}^{\infty}
     \frac{\bar{\tmmathbf{Y}}_{n, N_i}}{w^{n + 1}} \right)^{\mu -
     \theta^{(2)}} d \text{} z \text{} d \text{} w. \]
  For $\pi \in P_2 (r)$, there are $r / 2$ collections of integers
  $(\eta^{(1)}, \theta^{(1)}, \eta^{(2)}, \theta^{(2)})$ that correspond to
  $\prod_{(i, j) \in \pi} \mathcal{G}_{k_i, k_j}^{(N_i, N_j)}$. Justifying as
  in Theorem \ref{normal1!}, the desired limit emerges from
  \[ \mathbb{E}_{\tmmathbf{\beta}, \tmmathbf{y}} \left[ \prod_{\underset{i <
     j}{(i, j) \in \pi_1}} \mathcal{G}_{k_i, k_j}^{(N_i, N_j)} \cdot
     \prod_{\underset{i < j}{(i, j) \in \pi_2}} \mathcal{G}_{k_i, k_j}^{(N_i,
     N_j)} \cdot \prod_{\underset{i < j}{(i, j) \in \pi_3}} \mathcal{G}_{k_i,
     k_j}^{(N_i, N_j)} \cdot \prod_{\underset{i < j}{(i, j) \in \pi_4}}
     \mathcal{G}_{k_i, k_j}^{(N_i, N_j)} \right], \]
  if we consider all its summands, that correspond to the case where for each
  $\prod_{(i, j) \in \pi} \mathcal{G}_{k_i, k_j}^{(N_i, N_j)}$ in the
  expectation of the product, only for one among the $r / 2$ collections of
  integers $(\eta^{(1)}, \theta^{(1)}, \eta^{(2)}, \theta^{(2)})$ we have that
  $\eta^{(1)} + \theta^{(1)} + \eta^{(2)} + \theta^{(2)} = 1$. For the rest,
  we must have that $\eta^{(1)} + \theta^{(1)} + \eta^{(2)} + \theta^{(2)} =
  0$. This procedure will give something convergent, due to the $2^{- 1}$-CLT
  appropriateness. The limit will depend on double contour integrals similar
  to (\ref{poslft}), and other $4$-fold contour integrals that can be computed
  explicitly. All the other terms of $M^2 \mathbb{E}_{\tmmathbf{\beta},
  \tmmathbf{y}} (X_{\gamma_1, \ldots, \gamma_r}^{k_1, \ldots, k_r}
  (\tmmathbf{\beta}, \tmmathbf{y}) -\mathbb{E}_{\tmmathbf{\beta},
  \tmmathbf{y}} [X_{\gamma_1, \ldots, \gamma_r}^{k_1, \ldots, k_r}
  (\tmmathbf{\beta}, \tmmathbf{y})])^4$ either will cancel out or they will
  vanish as $M \rightarrow \infty$. This proves the claim.
  
  For $\varepsilon = 1$, the claim can be proved using exactly the same
  strategy as before. Notice that the limit of the second moment exists due to
  the $1$-CLT appropriateness, which is not the case for $\varepsilon = 1 /
  2$.
\end{proof}

Now we turn to the proof of Theorem \ref{NMNm}.

\begin{proof}[of Theorem \ref{NMNm}]
  We only consider the case $\varepsilon = 1 / 2$, since for $\varepsilon = 1$
  the proof is the same. Let $k_1, \ldots, k_r$ be non-negative integers and
  $N_i = \lfloor \gamma_i M \rfloor$, where $0 < \gamma_1 \leq \cdots \leq
  \gamma_r \leq 1$. Since the higher order term of
  \begin{equation}
    \mathbb{E}_{\tmmathbf{\beta}, \tmmathbf{y}}
    \mathbb{E}_{\tmmathbf{\lambda}} [(p_{k_1}^{(N_1, M)}
    -\mathbb{E}_{\tmmathbf{\lambda}} [p_{k_1}^{(N_1, M)}]) \ldots
    (p_{k_r}^{(N_r, M)} -\mathbb{E}_{\tmmathbf{\lambda}} [p_{k_r}^{(N_r,
    M)}])] \label{tbsprkl}
  \end{equation}
  is
  \[ \mathbb{E}_{\tmmathbf{\beta}, \tmmathbf{y}} \left[ \sum_{\pi \in P_2 (r)}
     \prod_{\underset{i < j}{(i, j) \in \pi}} \mathcal{G}_{k_i, k_j}^{(N_i,
     N_j)} \right], \]
  the CLT appropriateness implies that after normalization, (\ref{tbsprkl})
  converges to $\sum_{\pi \in P_2 (r)} \prod_{(i, j) \in \pi} c_{k_i,
  k_j}^{(\gamma_i, \gamma_j)}$, where $c_{l, k}^{(\gamma_1, \gamma_2)}$ is the
  covariance given in (\ref{poslft}). To conclude the proof, it suffices to
  show that almost surely,
  \[ \lim_{M \rightarrow \infty} \frac{1}{M^{k_1 + \cdots + k_r}}
     \mathbb{E}_{\tmmathbf{\lambda}} [(p_{k_1}^{(N_1, M)}
     -\mathbb{E}_{\tmmathbf{\lambda}} [p_{k_1}^{(N_1, M)}]) \ldots
     (p_{k_r}^{(N_r, M)} -\mathbb{E}_{\tmmathbf{\lambda}} [p_{k_r}^{(N_r,
     M)}])] \text{{\hspace{7em}}} \]
  \begin{equation}
    = \lim_{M \rightarrow \infty} \frac{1}{M^{k_1 + \cdots + k_r}}
    \mathbb{E}_{\tmmathbf{\beta}, \tmmathbf{y}}
    \mathbb{E}_{\tmmathbf{\lambda}} [(p_{k_1}^{(N_1, M)}
    -\mathbb{E}_{\tmmathbf{\lambda}} [p_{k_1}^{(N_1, M)}]) \ldots
    (p_{k_r}^{(N_r, M)} -\mathbb{E}_{\tmmathbf{\lambda}} [p_{k_r}^{(N_r,
    M)}])] \text{.} \label{toslft}
  \end{equation}
  To ensure that, it is sufficient to show that
  \begin{equation}
    \lim_{N \rightarrow \infty} \mathbb{P} \{ \sup_{M \geq N} | X_{\gamma_1,
    \ldots, \gamma_r}^{k_1, \ldots, k_r} (\tmmathbf{\beta}, \tmmathbf{y})
    -\mathbb{E}_{\tmmathbf{\beta}, \tmmathbf{y}} [X_{\gamma_1, \ldots,
    \gamma_r}^{k_1, \ldots, k_r} (\tmmathbf{\beta}, \tmmathbf{y})] | \geq
    \alpha \} = 0, \label{thskts}
  \end{equation}
  for all $\alpha > 0$. By Chebyshev's inequality, we get
  \[ \mathbb{P} \{ \sup_{M \geq N} | X_{\gamma_1, \ldots, \gamma_r}^{k_1,
     \ldots, k_r} (\tmmathbf{\beta}, \tmmathbf{y})
     -\mathbb{E}_{\tmmathbf{\beta}, \tmmathbf{y}} [X_{\gamma_1, \ldots,
     \gamma_r}^{k_1, \ldots, k_r} (\tmmathbf{\beta}, \tmmathbf{y})] | \geq
     \alpha \} \text{} \]
  \[ \leq \sum_{M = N}^{\infty} \mathbb{P} \left\{ | X_{\gamma_1, \ldots,
     \gamma_r}^{k_1, \ldots, k_r} (\tmmathbf{\beta}, \tmmathbf{y})
     -\mathbb{E}_{\tmmathbf{\beta}, \tmmathbf{y}} [X_{\gamma_1, \ldots,
     \gamma_r}^{k_1, \ldots, k_r} (\tmmathbf{\beta}, \tmmathbf{y})] | \geq
     \frac{\alpha}{2} \right\} \]
  \[ \text{} \leq \frac{16}{\alpha^4} \sum_{M = N}^{\infty}
     \mathbb{E}_{\tmmathbf{\beta}, \tmmathbf{y}} (X_{\gamma_1, \ldots,
     \gamma_r}^{k_1, \ldots, k_r} (\tmmathbf{\beta}, \tmmathbf{y})
     -\mathbb{E}_{\tmmathbf{\beta}, \tmmathbf{y}} [X_{\gamma_1, \ldots,
     \gamma_r}^{k_1, \ldots, k_r} (\tmmathbf{\beta}, \tmmathbf{y})])^4 \leq
     \tmop{const} \cdot \sum_{M = N}^{\infty} \frac{1}{M^2}, \]
  where the last inequality is due to Proposition \ref{pfgmsklr}. Since
  $\sum_{M = 1}^{\infty} \frac{1}{M^2} < \infty$, we get that (\ref{thskts})
  holds, and this proves the claim.
\end{proof}

\begin{remark}
  From the covariance structure (\ref{poslft}) we deduce that the quenched
  fluctuations are described by the Gaussian Free Field. Notice that the
  covariance depends only on the limit shape of $\{ \tmmathbf{x}_i,
  \tmmathbf{\beta}_i \}_{i = 1}^M$.
\end{remark}

\

(Panagiotis Zografos) Institute of Mathematics, Leipzig University,
Augustusplatz 10, 04109 Leipzig, Germany

{\tmem{Email address}}: pzografos04@gmail.com

\end{document}